\theoremstyle{plain}
\newtheorem{Thm}[equation]{Theorem}
\newtheorem{Cor}[equation]{Corollary}
\newtheorem{Prop}[equation]{Proposition}
\newtheorem{Lem}[equation]{Lemma}
\newtheorem{Rmk}[equation]{Remark}
\numberwithin{equation}{section}
\newcommand{\Hom}{\operatorname{Hom}}
\newcommand{\Gal}{\operatorname{Gal}}
\newcommand{\GL}{\operatorname{GL}}
\newcommand{\PGL}{\operatorname{PGL}}
\newcommand{\SL}{\operatorname{SL}}
\newcommand{\Sp}{\operatorname{Sp}}
\newcommand{\GSp}{\operatorname{GSp}}
\newcommand{\PGSp}{\operatorname{PGSp}}
\newcommand{\GSpin}{\operatorname{GSpin}}
\newcommand{\PGSpin}{\operatorname{PGSpin}}
\newcommand{\SO}{\operatorname{SO}}
\newcommand{\GSO}{\operatorname{GSO}}
\newcommand{\OO}{\operatorname{O}}
\newcommand{\JH}{\operatorname{JH}}
\newcommand{\Ind}{\operatorname{Ind}}
\newcommand{\Gm}{\mathbb{G}_m}
\newcommand{\simi}{\operatorname{sim}}
\newcommand{\Spin}{\operatorname{Spin}}
\newcommand{\C}{\mathbb C}
\newcommand{\Z}{\mathbb{Z}}
\newcommand{\R}{\mathbb{R}}
\newcommand{\bm}{\begin{multline*}}
\newcommand{\tu}{\end  {multline*}}
\newcommand{\ie}{{\em i.e. }}
\renewcommand{\L}{\mathcal{L}}
\title[LLC for $\Sp(4)$]{The local Langlands conjecture for $\Sp(4)$}
\author{Wee Teck Gan and Shuichiro Takeda}
\address{Mathematics Department, University of California, San Diego, 9500 Gilman Drive, La Jolla,
92093, U.S.A.}
\address{Department of Mathematics, Purdue University,
150 N. University Street, West Lafayette, IN 47907-2067, U.S.A.}
\email{wgan@math.ucsd.edu} \email{stakeda@math.purdue.edu}
\begin{document}

\maketitle

\begin{abstract}
We show that the local Langlands conjecture for $\Sp_{2n}(F)$
follows from that for $\GSp_{2n}(F)$, where $F$ is a non-archimedean local field of characteristic $0$. In particular, we prove the
local Langlands conjecture for $\Sp_4(F)$, based on our previous work
\cite{GT} on the local Langlands conjecture for
$\GSp_4(F)$. We also determine the possible sizes of
$L$-packets for $\Sp_4(F)$.
\end{abstract}


\section{\bf Introduction}


Let $F$ be a non-archimedean local field of characteristic $0$ and
residue characteristic $p$. Let $W_F$ be the Weil group of $F$ and
let $WD_F = W_F \times \SL_2(\C)$ be the Weil-Deligne group. It was
shown by Harris-Taylor \cite{HT} and Henniart \cite{He1} that there is a
natural bijection between the set of equivalence classes of
irreducible admissible representations of $\GL_n(F)$ and the set of
conjugacy classes of L-parameters for $\GL_n$, i.e. admissible
homomorphisms
\[
    \phi: W_F \times \SL_2(\C) \longrightarrow \GL_n(\C).
\]
This bijection satisfies a number of natural conditions which
determine it uniquely.\\

For a general connected reductive group $G$ over $F$, which we
assume to be split for simplicity, Langlands conjectures that there
is a surjective finite-to-one map from the set $\Pi(G)$ of
(equivalence classes of) irreducible admissible representations of
$G(F)$ to the set $\Phi(G)$ of (equivalence classes of)  admissible
homomorphisms
\[
    W_F \times \SL_2(\C) \longrightarrow  \hat{G},
\]
where $\hat{G}$ is the Langlands dual group of $G$ and the
homomorphisms are taken up to $\hat{G}$-conjugacy. This leads to a
partition of the set $\Phi(G)$ into a disjoint union of finite subsets,
which are the fibers of the map and are called $L$-packets.\\

In an earlier paper \cite{GT}, we have demonstrated the local Langlands conjecture for the group $\GSp_4$.
In this paper, we shall show how the local Langlands conjecture for
$\GSp_{2n}$, together with some of its expected properties, implies the
local Langlands conjecture for $\Sp_{2n}$. Together with \cite{GT}, this proves the
local Langlands conjecture for $\Sp_4$. Our main theorem is:\\
\vskip 10pt

\noindent{\bf \underline{Main Theorem}}\\
\noindent There is a surjective finite-to-one map
\[
    \mathcal{L}: \Pi(\Sp_4) \longrightarrow \Phi(\Sp_4)
\]
such that for an L-parameter $\varphi$, its fiber $\mathcal{L}_{\varphi}$ is of the same size (\ie the same cardinality) as the character group of the (abelian) component group
\[
    A_{\varphi} = \pi_0(Z(Im(\varphi))),
\]
where $Z(Im(\varphi))$ is the centralizer of the image of $\varphi$ in $\widehat{\Sp_4}= \SO_5(\C)$.
Moreover, the map $\mathcal{L}$ satisfies the following properties:
\vskip 5pt

\noindent (i) $\mathcal{L}$ preserves local $\gamma$-factors, $L$-factors and $\epsilon$-factors of pairs, whenever those factors are defined and satisfy some standard properties; see
\S \ref{S:properties} for the details. More precisely, suppose that $\sigma$ is an irreducible representation of $\GL_r(F)$ and $\varpi$ is an irreducible representation of $\Sp_4(F)$ which we assume to be  either generic or non-supercuspidal if $r > 1$. Let $\phi_{\sigma}$ and $\varphi_{\varpi}$ be the L-parameters of $\sigma$ and $\varpi$ respectively. Then we have
\[  \begin{cases}
\gamma(s, \varpi \times \sigma, std \otimes std, \psi) = \gamma(s, \varphi_{\varpi} \otimes \phi_{\sigma}, \psi) \\
L(s, \varpi \times \sigma, std \otimes std) = L(s, \varphi_{\varpi} \otimes \phi_{\sigma}) \\
\epsilon(s, \varpi \times \sigma, std \otimes std, \psi) = \epsilon(s, \varphi_{\varpi} \otimes \phi_{\sigma},\psi). \end{cases} \]
\noindent Here the functions on the RHS are the local factors of Artin type associated to the relevant representations of the Weil-Deligne group $WD_F$.
Those on the LHS are the local factors attached by Shahidi \cite{Shahidi} to the generic representations of
$\Sp_4(F) \times \GL_r(F)$ and the standard representation $std \otimes std$ of the dual group $\SO_5(\C) \times \GL_r(\C)$, and extended to all non-generic non-supercuspidal representations using the Langlands classification and multiplicativity. When $r = 1$, the local factors on the LHS can (alternatively) be defined for all representations using the doubling method of Piatetski-Shapiro and Rallis  \cite{LR}.

 \vskip 10pt

\noindent (ii) Suppose that $\varpi$ is a non-generic supercuspidal representation. For any irreducible supercuspidal representation $\sigma$ of $\GL_r(F)$, let $\mu(s, \varpi \boxtimes \sigma)$ denote the Plancherel measure associated to the family of induced representations
$I_P(\varpi \boxtimes \sigma,s)$ on $\Sp_{2r+4}(F)$, where we have regarded $\varpi \boxtimes \sigma$ as a representation of the Levi subgroup $\Sp_4(F) \times \GL_r(F)$. Then $\mu(s, \varpi \boxtimes \sigma)$ is equal to
\[  \gamma(s,  \varphi_{\varpi}^{\vee} \otimes \phi_{\sigma}, \psi) \cdot
\gamma(-s, \varphi_{\varpi} \otimes \phi_{\sigma}^{\vee},\overline{\psi}) \cdot
 \gamma(2s,  {\bigwedge}^2 \phi_{\sigma},\psi) \cdot
\gamma(-2s,   {\bigwedge}^2 \phi_{\sigma}^{\vee}, \overline{\psi}). \]
 \vskip 10pt

\noindent (iii) The representation $\varpi$ is a discrete series representation if and only if its L-parameter $\mathcal{L}(\varpi)$ does not factor through any proper parabolic subgroup of $\SO_5(\C)$.
\vskip 10pt

\noindent (iv) An L-packet $\mathcal{L}_{\varphi}$ contains a generic representation if and only if the adjoint L-factor $L(s , Ad \circ \varphi)$ is holomorphic at $s = 1$. Here, $Ad$ denotes the adjoint representation of $\SO_5(\C)$ on the complex Lie algebra $\mathfrak{so}(5)$. Moreover, $\mathcal{L}_{\varphi}$ contains a tempered generic representation if and only if $\varphi$ is a tempered L-parameter, i.e. $\varphi|_{W_F}$ has bounded image in $\SO_5(\C)$.
\vskip 10pt

\noindent (v) The map $\mathcal{L}: \Pi(\Sp_4) \longrightarrow \Phi(\Sp_4)$ is the unique one such that one has the following commutative diagram:
\[ \begin{CD}
\Pi(\GSp_4) @>L>>  \Phi(\GSp_4) \\
@VVV     @VVstdV  \\
\Pi(\Sp_4) @>\mathcal{L}>> \Phi(\Sp_4) \end{CD} \]
\vskip 5pt
\noindent  Here, note that the left vertical arrow is not a map at all: it is a correspondence defined by the subset of $\Pi(\GSp_4) \times \Pi(\Sp_4)$ consisting of pairs $(\pi, \varpi)$ such that $\varpi$ is a constituent of the  restriction of $\pi$ to $\Sp_4$. Moreover, the map $L$ is given by the local Langlands correspondence for $\GSp_4$ given in \cite[Main Theorem]{GT} and the right vertical arrow is defined by composition with the natural projection
\[  std : \GSp_4(\C)\rightarrow \PGSp_4(\C) \cong \SO_5(\C),  \]
which gives the standard representation of $\GSp_4(\C)$.

\vskip 10pt

\noindent (vi) The map $\mathcal{L}$ is uniquely determined by properties (i) (with $r \leq 3$), (ii) (with
$r \leq 4$) and (iii).
\vskip 20pt

Our proof is essentially a modification of the earlier work by Gelbart and Knapp \cite{GK}, in which they derived the local Langlands conjecture for $\SL_n$ from that for $\GL_n$. As suggested by property (v) of the Main Theorem, the definition of $\mathcal{L}$ is given as follows. For each
$\varpi \in\Pi(\Sp_4)$ one can find $\pi \in \Pi(\GSp_4)$ such
that the restriction $\pi|_{\Sp_4}$ contains $\varpi$. If
$\phi:WD_F\rightarrow\GSp_4(\C)$ is the $L$-parameter of
$\pi$ attached by \cite{GT}, then the composite
\[  \varphi:=std(\phi) \]
of $\phi$ with the projection $std$ gives the L-parameter for $\varpi$. After
checking that this is well-defined, the main point is to verify that the fiber $\mathcal{L}_{\varphi}$ of $\mathcal{L}$ over $\varphi$ has size $\# A_{\varphi}$. This is shown by establishing the following key short exact sequence:
\[  \begin{CD}
1 @>>> A_{\phi} @>>> A_{\varphi} @>>> I(\phi) @>>> 1, \end{CD} \]
where
\[  I(\phi) = \{ \text{characters $\chi$ of $W_F$: $\phi \otimes \chi \cong \phi$ as elements of $\Phi(\GSp_4)$} \}. \]
\vskip 15pt

We then give a more precise determination of the size of an $L$-packet of $\Sp_4$.
 Unlike the case of $\GSp_4$, where an $L$-packet is of size 1 or 2, for each $L$-parameter $\varphi\in\Phi(\Sp_4)$, we have:
\[
    \# L_{\varphi}=
    \begin{cases}
    1,2, 4\text{ or }8, &\text{ if $p\neq 2$;}\\
    1,2, 4,8\text{ or }16, &\text{ if $p= 2$}.
    \end{cases}
\]
In Section \ref{S:L-packet}, we give precise conditions for each of these possibilities in terms of the Galois theoretic  properties of a $\GSp_4$-parameter $\phi$ for which $\varphi = std(\phi)$. Our result is the analog of the well-known fact that an $L$-packet of $\SL_n$ associated to an irreducible representation $\pi$ of $\GL_2(F)$ has size 1, 2 or 4, depending on whether $\pi$ is primitive, dihedral with respect to a unique quadratic field, or dihedral with respect to 3 quadratic fields.
\vskip 5pt

\noindent{\bf Acknowledgments:} We thank Dipendra Prasad for several helpful conversations regarding symplectic parameters. W.T. Gan is partially supported by NSF grant 0500781. Also part of this paper was written when S. Takeda was at Ben Gurion University of the Negev in Israel. During his stay there, he was supported by the Skirball postodctoral fellowship of the Center of Advanced Studies in Mathematics at the Mathematics Department of Ben Gurion University.

\quad\\


\section{\bf From
$\GSp_{2n}$ to $\Sp_{2n}$}\label{S:Sp(2n)}


For any split connected reductive group $G$ over a non-archimedean local field $F$, let
$\Pi(G)$ be the set of equivalence classes of irreducible admissible
representations of $G(F)$, and $\Phi(G)$ the set of equivalence classes
of admissible homomorphisms $WD_F\rightarrow \hat{G}$. We will
derive the local Langlands conjecture for $\Sp_{2n}$ from that for
$\GSp_{2n}$. First let us note that
\[
    \widehat{\GSp_{2n}}=\GSpin_{2n+1}(\C), \quad \text{and} \quad
\widehat{\Sp_{2n}}=\PGSpin_{2n+1}(\C)\cong\SO_{2n+1}(\C).
\]
(We refer the reader to  \cite[Section 2]{AS} for the structure of the group $\GSpin$.)
Moreover, we have the canonical projection
\[
    std :\GSpin_{2n+1}(\C)\twoheadrightarrow\PGSpin_{2n+1}(\C) \cong \SO_{2n+1}(\C), \]
which is simply the standard $(2n+1)$-dimensional representation of $\GSpin_{2n+1}(\C)$.
We also let
\[
    \simi: \GSp_{2n}\rightarrow\Gm
\]
be the similitude character. Using $\simi$, we shall regard a character of $F^{\times}$
as a character of $\GSp_{2n}(F)$.
\vskip 10pt

For the rest of this section, we assume the following working
hypotheses, which we have verified for the case $n=2$ in \cite{GT}.
\vskip 15pt


\noindent{\bf \underline{Working Hypotheses}}
\vskip 10pt

\begin{enumerate}[(H1)]
\item There is a surjective finite-to-one map
$L:\Pi(\GSp_{2n})\rightarrow\Phi(\GSp_{2n})$ with the property that for each
$\phi\in\Phi(\GSp_{2n})$, the fibre $L_{\phi}$ is of the same size as
the component group
\[
    A_{\phi}=\pi_0(Z(Im(\phi))),
\]
where $Z(Im(\phi))$ is the centralizer of the image of $\phi$.
\vskip 5pt

\item For each quasicharacter $\chi$ of $F^{\times}$, we have
\[
	L(\pi\otimes \chi)=L(\pi) \otimes \chi,
\]
where on the right hand side, $\chi$ is viewed as a character of $WD_F$ via local class field theory.

\vskip 5pt

\item For each $\phi\in\Phi(\GSp_{2n})$, let
\[
    I(\phi)=\{\chi \in \widehat{(F^\times)}:
    \phi\cong\phi  \otimes \chi \}.
\]
Note that $I(\phi)$ consists of quadratic (and trivial) characters, since $\simi(\phi \otimes  \chi) =\simi(\phi) \cdot  \chi^2$.  Similarly, for each $\pi\in\Pi(\GSp_{2n})$, let
\[
    I(\pi)=\{\chi \in \widehat{(F^\times)}:
    \pi\cong\pi\otimes \chi  \},
\]
which consists also of quadratic characters.  Then for any $\pi\in L_{\phi}$,
\[
    I(\phi)=I(\pi).
\]
\end{enumerate}
\vskip 10pt

We also note the following:
\vskip 10pt

\begin{enumerate}[(H3a)]
\item For $\pi, \pi'\in L_{\phi}$, we have $I(\pi)=I(\pi')$.
\vskip 5pt

\item For each $\pi\in L_{\phi}$ and a quasicharacter $\chi$,
\[  \pi\otimes \chi \in L_{\phi} \Longrightarrow \pi\otimes \chi \cong\pi. \]
\end{enumerate}


\vskip 10pt

\noindent The following is easy to verify.
\begin{Lem}
Assume (H1) and (H2). Then (H3) holds if and only if (H3a) and (H3b) hold.
\end{Lem}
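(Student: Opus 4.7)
The plan is to prove both directions of the equivalence directly from the definitions, using (H2) as the bridge that translates between $I(\pi)$ and $I(\phi)$. The key observation is that (H2) immediately gives $L(\pi \otimes \chi) = L(\pi) \otimes \chi$, so membership questions for $I(\pi)$ and $I(\phi)$ both ultimately reduce to the same equality of $L$-parameters.

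For the forward direction, I would assume (H3) and verify (H3a) and (H3b) in turn. Property (H3a) is essentially immediate: for $\pi, \pi' \in L_\phi$, two applications of (H3) give $I(\pi) = I(\phi) = I(\pi')$. For (H3b), I would take $\pi \in L_\phi$ with $\pi \otimes \chi \in L_\phi$. Then $L(\pi \otimes \chi) = \phi$, while (H2) also gives $L(\pi \otimes \chi) = L(\pi) \otimes \chi = \phi \otimes \chi$. Hence $\chi \in I(\phi)$, and (H3) yields $\chi \in I(\pi)$, i.e.\ $\pi \otimes \chi \cong \pi$.

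For the backward direction, I would fix an arbitrary $\pi \in L_\phi$ and establish $I(\phi) = I(\pi)$ by double inclusion. The inclusion $I(\pi) \subseteq I(\phi)$ follows automatically from (H2) without invoking (H3a) or (H3b): if $\pi \otimes \chi \cong \pi$, apply $L$ and (H2) to get $\phi \otimes \chi \cong \phi$. The reverse inclusion $I(\phi) \subseteq I(\pi)$ is where (H3b) intervenes: for $\chi \in I(\phi)$, (H2) gives $L(\pi \otimes \chi) = L(\pi) \otimes \chi = \phi \otimes \chi = \phi$, so $\pi \otimes \chi \in L_\phi$, and then (H3b) forces $\pi \otimes \chi \cong \pi$, i.e.\ $\chi \in I(\pi)$.

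There is no substantive obstacle; the entire proof is a short chase through the definitions. I would note in passing that (H3a) does not appear explicitly in the backward argument — the equality $I(\phi) = I(\pi)$ falls out for every $\pi \in L_\phi$ from (H1), (H2), and (H3b) alone — so (H3a) is in fact a consequence of the other hypotheses. The stated equivalence is of course still correct, and this observation does not affect the two-paragraph case split above.
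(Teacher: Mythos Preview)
Your proof is correct. The paper does not actually give a proof of this lemma --- it merely states ``The following is easy to verify'' --- so there is no argument in the paper to compare against. Your observation that (H3a) is not used in the backward direction, and hence is a consequence of (H2) and (H3b) alone, is accurate and worth noting.
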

\vskip 10pt

\noindent Now we have the following:
\vskip 5pt

\begin{Prop}
If $n=2$, \ie for $\GSp_4$, all of the above working hypotheses are satisfied.
\end{Prop}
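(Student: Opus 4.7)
The plan is to deduce each of the working hypotheses directly from the main theorem of our earlier paper \cite{GT}. Hypothesis (H1) --- the existence of a finite-to-one surjection $L:\Pi(\GSp_4)\to\Phi(\GSp_4)$ with fibres of cardinality $\#A_\phi$ --- is precisely the main assertion of \cite{GT}. Hypothesis (H2) --- the compatibility of $L$ with character twists via $\simi$ --- is one of the characterizing properties of the correspondence constructed there, and it follows immediately from the fact that $L$ matches $\gamma$- and $\e$-factors of pairs, since twisting by a quasicharacter of $F^\times$ shifts those factors in the obvious way on both sides of the correspondence.

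The nontrivial content is (H3), \ie $I(\phi)=I(\pi)$ for every $\pi\in L_\phi$. One inclusion is formal: if $\chi\in I(\pi)$, then applying $L$ and using (H2) gives $\phi\otimes\chi\cong L(\pi\otimes\chi)=L(\pi)\cong\phi$, so $I(\pi)\subseteq I(\phi)$. For the reverse inclusion, if $\chi\in I(\phi)$, the same computation only gives $\pi\otimes\chi\in L_\phi$, and one must rule out the possibility that $\pi\otimes\chi$ is the \emph{other} member of a size-two packet. By the lemma preceding the proposition, this reduces the whole of (H3) to verifying (H3a) and (H3b).

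To establish (H3b), the plan is to run through the cases in \cite{GT} where $L_\phi$ has size two. These L-packets are of endoscopic origin (of Saito--Kurokawa and Yoshida type), and in each case the two members $L_\phi=\{\pi_+,\pi_-\}$ are distinguished by an intrinsic invariant: e.g.\ the sign $\pm 1$ in the endoscopic character identity, or equivalently the generic versus non-generic constituent of a theta lift from an appropriate orthogonal similitude group. Since this distinguishing invariant transforms naturally under twisting of $\pi$ by a character of $F^\times$, a twist by $\chi\in I(\phi)$ necessarily sends $\pi_\pm$ to $\pi_\pm$ rather than swapping them, which is exactly (H3b). Property (H3a) is then immediate, since $I(\pi_+)=I(\phi)=I(\pi_-)$.

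The main obstacle is the final verification just described: one has to check, case by case along the list of endoscopic L-parameters $\phi=\phi_1\oplus\phi_2$ in \cite{GT}, that the invariant separating the two members of a size-two packet is genuinely preserved under the particular self-twists of $\phi$ by elements of $I(\phi)$. Concretely, this is a short finite calculation that uses the explicit description of the summands $\phi_1,\phi_2$ and the observation that any nontrivial $\chi\in I(\phi)$ either fixes each $\phi_i$ or permutes them, and that in either situation the endoscopic sign (or generic/non-generic dichotomy) used in \cite{GT} to label the two packet members is invariant under the twist. Once this is checked, (H1)--(H3) all follow.
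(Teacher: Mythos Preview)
Your approach is correct, but it is more elaborate than necessary and the paper's proof is considerably shorter. The paper does not pass through the reduction to (H3a) and (H3b) at all, nor does it invoke endoscopic character identities or a case-by-case analysis. Instead it argues directly: given $\chi\in I(\phi)$, twisting by $\chi$ induces a permutation of $L_\phi$; since $\#L_\phi\le 2$ and, in the size-two case, $L_\phi$ contains a \emph{unique} generic representation (this is part of the Main Theorem of \cite{GT}), and since twisting by a character pulled back via $\simi$ preserves genericity, the permutation must be trivial. Hence $\chi\in I(\pi)$ for every $\pi\in L_\phi$.

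The point you may be missing is that the ``intrinsic invariant'' you seek is simply genericity, and its invariance under twist requires no verification case by case: the character $\chi\circ\simi$ is trivial on unipotent elements, so a Whittaker functional for $\pi$ is automatically one for $\pi\otimes(\chi\circ\simi)$. Your proposal to check the endoscopic sign or to run through the Yoshida/Saito--Kurokawa list would also work, but it replaces a one-line uniform argument with a finite but unnecessary computation. In particular, your last paragraph (``the main obstacle'') dissolves once you observe that genericity alone does the job.

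A minor comment on (H2): the paper simply cites it as part of the Main Theorem of \cite{GT}, rather than rederiving it from the preservation of local factors. Your argument for (H2) is fine but again more than what is needed here.
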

\begin{proof}
The hypotheses (H1) and (H2) are contained in the Main Theorem of \cite{GT}.
For the hypothesis (H3), note that (H2) already implies that $I(\pi) \subset I(\phi)$ if $\pi \in L_{\phi}$.
If $\chi \in I(\phi)$, then twisting by $\chi$ gives rise to a permutation of $L_{\phi}$. But $L_{\phi}$ has size $1$ or $2$, and in the latter case, $L_{\phi}$ contains a unique generic representation. Since twisting by $\chi$ preserves genericity, the induced permutation must be trivial, so that $\chi \in I(\pi)$ for any $\pi \in L_{\phi}$. This proves the proposition.
 \end{proof}
\vskip 10pt

For each $\pi\in\Pi(\GSp_{2n})$, we define
\[
    \JH(\pi):=\{\text{constituents of } {\pi}|_{\Sp_{2n}}\}.
\]
It has been shown by \cite{AD} that for each $\pi\in\Pi(\GSp_{2n})$, the restriction $\pi|_{\Sp_{2n}}$ is multiplicity free. Hence in the set $\JH(\pi)$, constituents can also be taken as constituents up to equivalence. Then the main result of this section is:
\vskip 5pt

\begin{Thm}\label{T:LLC_Sp}
Suppose that all of the above working hypotheses are satisfied (for example if $n=2$).
Then there is a surjective finite-to-one map
\[  \mathcal{L} :\Pi(\Sp_{2n})\rightarrow\Phi(\Sp_{2n}) \]
so that for each
$\varphi\in\Phi(\Sp_{2n})$, the fibre $\mathcal{L}_{\varphi}$ is given by
\[
    \mathcal{L}_{\varphi}=\bigcup_{\pi\in L_{\phi}}\JH(\pi) \quad\text{(disjoint
    union)},
\]
for some (and any) lift $\phi$ of $\varphi$, \ie such that
$std(\phi)=\varphi$. Moreover, the size of  $\mathcal{L}_{\varphi}$ is equal to the size of the component group
\[
    A_\varphi=\pi_0(Z(Im(\varphi))).  \]
\end{Thm}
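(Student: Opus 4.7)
The strategy follows the Gelbart-Knapp blueprint of \cite{GK} for deducing LLC of $\SL_n$ from that of $\GL_n$, adapted to the inclusion $\Sp_{2n} \subset \GSp_{2n}$ with abelian quotient $\GSp_{2n}/\Sp_{2n} \cong \Gm$ under the similitude $\simi$.

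First I would verify that the map $\mathcal{L}$ is well-defined on $\Pi(\Sp_{2n})$. For any $\varpi\in\Pi(\Sp_{2n})$, Clifford theory applied to the exact sequence $1 \to \Sp_{2n}(F) \to \GSp_{2n}(F) \to F^\times \to 1$ produces some $\pi \in \Pi(\GSp_{2n})$ with $\varpi \in \JH(\pi)$, so one sets $\mathcal{L}(\varpi) := std(L(\pi))$. If $\pi'$ is another such lift, standard Clifford-Mackey analysis gives $\pi' \cong \pi\otimes\chi$ for some quasicharacter $\chi$ of $F^\times$; hypothesis (H2) then yields $L(\pi') = L(\pi)\otimes\chi$. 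Since the twist $L(\pi)\otimes\chi$ differs from $L(\pi)$ by a cocharacter valued in the central torus $Z(\GSpin_{2n+1}(\C)) = \ker(std)$, we obtain $std(L(\pi')) = std(L(\pi))$, and $\mathcal{L}$ is well-defined.

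Next I would verify surjectivity together with the fibre formula. Surjectivity reduces to lifting every $\varphi\in\Phi(\Sp_{2n})$ to some $\phi\in\Phi(\GSp_{2n})$ with $std(\phi)=\varphi$; the obstruction lies in $H^2(W_F,\C^\times)$, which vanishes for a non-archimedean local Weil group, so a lift exists. The containment $\bigcup_{\pi\in L_\phi}\JH(\pi) \subseteq \mathcal{L}^{-1}(\varphi)$ is immediate. Conversely, if $\mathcal{L}(\varpi)=\varphi$ and $\pi$ is any $\GSp$-lift of $\varpi$, then $std(L(\pi)) = \varphi = std(\phi)$ forces $L(\pi) = \phi \otimes \chi$ for some character $\chi$ of $F^\times$; by (H2), $\pi\otimes\chi^{-1}\in L_\phi$ with $\varpi\in\JH(\pi\otimes\chi^{-1})$, giving the reverse containment.

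The cardinality count then proceeds in three steps. By multiplicity-freeness of $\pi|_{\Sp_{2n}}$ \cite{AD} together with (H3), the standard Clifford-type computation gives $\#\JH(\pi) = \#I(\pi) = \#I(\phi)$. By (H3b), the sets $\JH(\pi)$ and $\JH(\pi')$ are disjoint for inequivalent $\pi,\pi'\in L_\phi$ (a common constituent would force $\pi'\cong\pi\otimes\chi\cong\pi$). Combined with $\#L_\phi = \#A_\phi$ from (H1), this yields
\[ \#\mathcal{L}_\varphi \;=\; \#L_\phi \cdot \#I(\phi) \;=\; \#A_\phi \cdot \#I(\phi). \]
The final and most delicate step is to match this product with $\#A_\varphi$ by constructing the short exact sequence
\[ 1 \longrightarrow A_\phi \longrightarrow A_\varphi \longrightarrow I(\phi) \longrightarrow 1 \]
advertised in the introduction. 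The first arrow is induced by $std : Z(Im(\phi)) \to Z(Im(\varphi))$. For the second, one lifts $g\in Z(Im(\varphi))$ to some $\tilde g\in\GSpin_{2n+1}(\C)$; then $w\mapsto \tilde g\, \phi(w)\, \tilde g^{-1}\, \phi(w)^{-1}$ lands in $\ker(std)=\C^\times$, producing a character $\chi_g$ of $W_F$ with $\phi\otimes\chi_g\cong\phi$, i.e.\ $\chi_g\in I(\phi)$. I expect the main obstacle to be the exactness of this sequence, in particular: (a) well-definedness of $g\mapsto\chi_g$ on the component group $A_\varphi$ (independence of the lift $\tilde g$ and invariance under connected deformations of $g$); and (b) surjectivity, which requires, given $\chi\in I(\phi)$ realized by an intertwiner $\tilde g\in\GSpin_{2n+1}(\C)$, that $std(\tilde g)$ genuinely lies in $Z(Im(\varphi))$ and represents a nontrivial class in $\pi_0$. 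Exactness in the middle is routine: $\chi_g=1$ precisely when $\tilde g$ centralizes $Im(\phi)$ pointwise, which identifies the kernel with the image of $A_\phi$. Once the sequence is in hand, $\#\mathcal{L}_\varphi = \#A_\varphi$ is immediate.
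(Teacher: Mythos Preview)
Your proposal is correct and follows essentially the same approach as the paper, including the key short exact sequence $1 \to A_\phi \to A_\varphi \to I(\phi) \to 1$ constructed exactly as you describe. Two technical points the paper handles explicitly that you gloss over: (a) since $\GSp_{2n}(F)/\Sp_{2n}(F) \cong F^\times$ is not finite, the paper passes through the intermediate group $H = F^\times \cdot \Sp_{2n}(F)$ (so that $\GSp_{2n}(F)/H \cong F^\times/(F^\times)^2$ is finite abelian) to make the Clifford--Mackey arguments (existence of a lift $\pi$, and $\pi' \cong \pi\otimes\chi$) rigorous; (b) for surjectivity, the vanishing of $H^2(W_F,\C^\times)$ only lifts $\varphi|_{W_F}$, and the paper separately lifts $\varphi|_{\SL_2(\C)}$ (trivially, by simple connectedness) and then checks that the two lifts commute in $\GSpin_{2n+1}(\C)$ via a connectedness argument.
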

\vskip 10pt

We will prove this theorem step by step. As we mentioned in the introduction, our proof is a modification of the work by Gelbart and Knapp \cite{GK}. So let us first quote a couple of general lemmas from \cite{GK} which we need for our proof.\\

\begin{Lem}\label{L:general_lemma1}
Let $G$ be a totally disconnected locally compact group and $H$ an open normal subgroup of $G$ such that $G/H$ is finite abelian. Also let $\varpi$ be an irreducible admissible representation of $H$. Suppose that $\pi$ and $\pi'$ are irreducible admissible representations of $G$ whose restrictions to $H$ are multiplicity free and contain $\varpi$. Then
\[
	\pi|_H\cong\pi'|_H
\]
and
\[
	\pi\cong \pi' \otimes \chi
\]
for some one dimensional character $\chi$ on $G$ which is trivial on $H$.
\end{Lem}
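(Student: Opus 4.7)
The plan is to invoke Clifford--Mackey theory for the pair $H \subset G$ with finite abelian quotient, and to use the multiplicity-freeness hypothesis to kill the Schur-multiplier obstruction so that both $\pi$ and $\pi'$ can be written explicitly as inductions from the $G$-stabilizer of $\varpi$.

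Concretely, let $G_\varpi = \{g \in G : \varpi^g \cong \varpi\}$, an open normal subgroup of $G$ containing $H$, with $G_\varpi/H$ a subgroup of the finite abelian group $G/H$. Clifford theory---which carries over to the admissible category for totally disconnected groups because $[G:H]$ is finite---tells me that the $\varpi$-isotypic component $V_\varpi \subset \pi|_H$ is $G_\varpi$-stable, that $\pi \cong \Ind_{G_\varpi}^G V_\varpi$, and that $V_\varpi \cong \tilde\varpi \otimes W$, where $\tilde\varpi$ is an irreducible projective extension of $\varpi$ to $G_\varpi$ and $W$ is a projective representation of $G_\varpi/H$ with the opposite cocycle. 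The multiplicity of $\varpi$ in $\pi|_H$ equals $\dim W$; our hypothesis forces $\dim W = 1$, which in turn forces the relevant cocycle to be a coboundary (a one-dimensional projective representation exists precisely when its cocycle is a coboundary). Hence $\tilde\varpi$ may be taken as a genuine representation of $G_\varpi$ and $W$ as a genuine character $\chi$ of $G_\varpi/H$, yielding $\pi \cong \Ind_{G_\varpi}^G(\tilde\varpi \otimes \chi)$. The same analysis applied to $\pi'$ produces $\pi' \cong \Ind_{G_\varpi}^G(\tilde\varpi \otimes \chi')$ for some character $\chi'$ of $G_\varpi/H$, using the \emph{same} stabilizer and the \emph{same} extension $\tilde\varpi$ (both being determined by $\varpi$).

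From these descriptions, the first assertion $\pi|_H \cong \pi'|_H$ is immediate: by Mackey both restrictions decompose as $\bigoplus_{g \in G/G_\varpi} \varpi^g$, each summand with multiplicity one. For the second assertion, I exploit that $G_\varpi/H$ is a subgroup of the finite abelian group $G/H$, so the character $\chi'\chi^{-1}$ of $G_\varpi/H$ extends to a character $\eta$ of $G/H$; inflating produces a one-dimensional character of $G$ trivial on $H$, and the projection formula yields
\[
  \pi \otimes \eta \cong \Ind_{G_\varpi}^G\bigl(\tilde\varpi \otimes \chi \otimes \eta|_{G_\varpi}\bigr) \cong \Ind_{G_\varpi}^G(\tilde\varpi \otimes \chi') \cong \pi'.
\]
Setting $\chi := \eta^{-1}$ gives $\pi \cong \pi' \otimes \chi$, as desired.

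The main subtlety is the transplantation of Clifford theory to the admissible setting, specifically the identification $V_\varpi \cong \tilde\varpi \otimes W$ as a module for $G_\varpi$ carrying a Schur-multiplier twist, and the deduction that multiplicity-freeness of $\pi|_H$ trivializes the cohomological obstruction to an honest extension of $\varpi$. Once that machinery is in place (it is exactly what Gelbart--Knapp use in their $\SL_n \subset \GL_n$ analysis in \cite{GK}), the remaining manipulations are formal.
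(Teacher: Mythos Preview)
Your argument is correct; the paper itself gives no proof at all but simply cites \cite[Lemma 2.4]{GK}, and what you have written is precisely the Clifford--Mackey argument underlying that reference. In other words, you have supplied the details the paper omits by citation, so there is nothing to compare.
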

\begin{proof}
This is Lemma 2.4 of \cite{GK}.
\end{proof}
\vskip 5pt

\begin{Lem}\label{L:general_lemma2}
Let $G$ and $H$ be as in the above lemma, and $\pi$ an irreducible admissible representation of $G$. Assume that the restriction $\pi|_H$ is multiplicity free and written as
\[
	\pi|_H=\varpi_1\oplus\cdots\oplus\varpi_m,
\]
where each $\varpi_i$ is an irreducible admissible representation of $H$. Set
\[
	I_H(\pi)=\{\chi: G/H \longrightarrow \C^{\times}:  \pi\otimes\chi=\pi\}
\]
and
\[
	N_{\pi}=\bigcap_{\chi \in I_H(\pi)} \ker\chi,
\]
so that $I_H(\pi)$ is the Pontryagin dual of $G/N_{\pi}$.
Then $G/N_{\pi}$ acts simply transitively on the set $\{\varpi_1,\dots,\varpi_m\}$ of irreducible constituents of $\pi|_H$. In particular, we have
\[  m = \# I_H(\pi). \]
\end{Lem}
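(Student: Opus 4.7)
The plan is to use Clifford theory for the normal subgroup $H \subset G$. First I would note that $G$ acts on isomorphism classes of irreducible admissible $H$-representations by conjugation, and since $\pi$ is a $G$-representation, the finite set $X = \{\varpi_1, \ldots, \varpi_m\}$ of constituents of $\pi|_H$ is stable under this action. Because conjugation by an element $h \in H$ on a representation $\rho$ of $H$ is intertwined with $\rho$ by the operator $\rho(h)$, the action descends to an action of the finite abelian group $G/H$ on $X$.

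Next I would establish transitivity. By Frobenius reciprocity, $\pi \hookrightarrow \Ind_H^G \varpi_1$, and by Mackey
\[
(\Ind_H^G \varpi_1)|_H \cong \bigoplus_{[g] \in G/H} g \cdot \varpi_1,
\]
whose distinct isomorphism classes form the single $G/H$-orbit of $\varpi_1$. Hence $X$ is contained in that orbit, and being $G$-stable it must equal it. Let $S \subseteq G$ denote the stabilizer of $\varpi_1$; it contains $H$ and is normal in $G$ since $G/H$ is abelian, and already $|G/S|$ equals the orbit size $m$.

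To identify the stabilizer with $N_\pi$, I would invoke Clifford theory in the multiplicity-free setting: it gives $\pi \cong \Ind_S^G \widetilde{\varpi}_1$ for an irreducible extension $\widetilde{\varpi}_1$ of $\varpi_1$ to $S$, uniquely determined by $\pi$, and the set of extensions of $\varpi_1$ to $S$ forms a torsor under $(S/H)^\wedge$ via twisting. For any $\chi \in (G/H)^\wedge$ one then has
\[
\pi \otimes \chi \cong \Ind_S^G(\widetilde{\varpi}_1 \otimes \chi|_S),
\]
which is isomorphic to $\pi$ if and only if $\chi|_S$ is trivial on $S/H$: a non-trivial twist produces a different extension of $\varpi_1$, and for $g \notin S$ the conjugate $g \cdot \widetilde{\varpi}_1$ no longer restricts to $\varpi_1$, so Clifford uniqueness forbids any cancellation. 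Consequently
\[
I_H(\pi) = \ker\bigl((G/H)^\wedge \to (S/H)^\wedge\bigr) = (G/S)^\wedge.
\]

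Pontryagin duality then forces $N_\pi = S$, so $G/N_\pi = G/S$ acts simply transitively on $X$, and $m = |G/S| = |I_H(\pi)|$. The main obstacle I expect is the Clifford step: verifying carefully that the multiplicity-free hypothesis really does pin down the extension $\widetilde{\varpi}_1$ uniquely and that character twists on $G$ translate exactly into twists of this extension on $S$. Everything else is routine bookkeeping with finite abelian groups and their dual groups.
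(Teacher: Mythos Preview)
Your argument is correct. The paper does not give its own proof of this lemma; it simply cites Lemma~2.1 and Corollary~2.2 of Gelbart--Knapp \cite{GK}. What you have written is essentially a reconstruction of the standard Clifford-theoretic argument that underlies those results, so there is nothing substantive to compare.

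One remark on the ``obstacle'' you flag at the end: the multiplicity-free hypothesis disposes of it immediately, and you need not appeal to any general Clifford machinery about projective representations. Since $\varpi_1$ occurs exactly once in $\pi|_H$, the $\varpi_1$-isotypic subspace of $\pi$ is a single copy of $\varpi_1$; this subspace is automatically $S$-stable (because $S$ fixes the isomorphism class of $\varpi_1$), so it \emph{is} the extension $\widetilde{\varpi}_1$. Irreducibility of $\Ind_S^G\widetilde{\varpi}_1$ follows from the fact that its restriction to $H$ is $\bigoplus_{g\in G/S} g\cdot\varpi_1$ with pairwise non-isomorphic summands, and a nonzero $G$-map $\pi\to\Ind_S^G\widetilde{\varpi}_1$ comes from Frobenius reciprocity; hence $\pi\cong\Ind_S^G\widetilde{\varpi}_1$. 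Your computation of $I_H(\pi)$ as the kernel of restriction $(G/H)^\wedge\to(S/H)^\wedge$ is then exactly right, and Pontryagin duality gives $N_\pi=S$.
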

\begin{proof}
This is contained in Lemma 2.1 and  Corollary 2.2 of \cite{GK}. Note that our $I_H(\pi)$ is denoted by $X_H(\pi)$ there.
\end{proof}
\vskip 10pt

\noindent\underline{\textbf{Definition of the map $\mathcal{L}$}}\\

First we consider how a representation $\pi$ of $\GSp_{2n}(F)$ restricts to a representation of the group
\[
    H:=F^\times \cdot \Sp_{2n}(F).
\]
Note that $H$ is an open normal subgroup of $\GSp_{2n}(F)$ such that the quotient $\GSp_4(F)/H\cong F^\times/{F^\times}^2$ is a finite elementary abelian 2-group. As we have mentioned before, it is shown in \cite[Theorem 1.4]{AD} that for each $\pi\in\Pi(\GSp_{2n})$, the restriction
$\pi|_{\Sp_{2n}}$ is multiplicity free, and hence so is $\pi|_H$. Therefore we can apply Lemmas \ref{L:general_lemma1} and \ref{L:general_lemma2} for $G=\GSp_{2n}(F)$ and $H=F^\times \cdot \Sp_{2n}(F)$.\\

Let $\varpi$ be an irreducible admissible representation of $\Sp_{2n}(F)$ and $\mu$ a quasicharacter of $F^\times$. Then consider the representation $\mu\boxtimes\varpi$ of $F^\times\times\Sp_{2n}(F)$. If $\mu(-1)=\varpi(-1)$, then the representation factors through the surjection $F^\times\times\Sp_{2n}(F)\rightarrow H$. So we write a representation of $H$ as $\mu\boxtimes\varpi$, with the assumption that
$\mu(-1)=\varpi(-1)$.\\

Now for each $\varpi \in\Pi(\Sp_{2n})$, let us define
\[
    E(\varpi)=\{\pi\in\Pi(\GSp_{2n}): \pi|_{\Sp_{2n}} \text{contains }
    \varpi \},
\]
so that $E(\varpi)$ is the set of representations of $\GSp_{2n}(F)$ ``lying above'' $\varpi$. This set is non-empty, as can be seen as follows. For each given $\varpi$, let us pick a character $\mu$ of $F^\times$ such that $\mu(-1)=\varpi(-1)$. Then any irreducible constituent of $\Ind_{H}^{\GSp_{2n}(F)}\mu\boxtimes\varpi$ is an element of  $E(\varpi)$. We need the following.
\vskip 5pt

\begin{Lem}\label{L:same_packet}
If $\pi, \pi' \in E(\varpi)$, then $\pi'\cong\pi\otimes \chi$ for some character $\chi$.
\end{Lem}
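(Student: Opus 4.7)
The plan is to invoke Lemma \ref{L:general_lemma1} with $G=\GSp_{2n}(F)$ and $H=F^\times\cdot\Sp_{2n}(F)$; by \cite{AD}, for any $\pi\in\Pi(\GSp_{2n})$ the restriction $\pi|_{\Sp_{2n}}$ (and hence $\pi|_H$) is multiplicity free. For $\pi\in E(\varpi)$, the restriction $\pi|_H$ decomposes as a direct sum of irreducible $H$-representations of the form $\omega_\pi\boxtimes\varpi_i$, where $\omega_\pi$ is the central character of $\pi$ and the $\varpi_i$ are the $\Sp_{2n}$-constituents of $\pi$. In particular $\omega_\pi\boxtimes\varpi$ occurs in $\pi|_H$, and likewise $\omega_{\pi'}\boxtimes\varpi$ occurs in $\pi'|_H$.

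If $\omega_\pi=\omega_{\pi'}$, then Lemma \ref{L:general_lemma1} applies immediately to $\pi$ and $\pi'$ and yields $\pi'\cong\pi\otimes\chi$ for some character $\chi$ of $G/H\cong F^\times/F^{\times 2}$ (viewed as a character of $\GSp_{2n}(F)$ via $\simi$). For the general case, I first twist $\pi$ so that the central characters match. Since $\omega_\pi(-1)=\varpi(-1)=\omega_{\pi'}(-1)$, the ratio $\eta:=\omega_{\pi'}/\omega_\pi$ is a character of $F^\times$ with $\eta(-1)=1$. Such an $\eta$ admits a square root among characters: one defines $\chi_0$ on $F^{\times 2}$ by $\chi_0(x^2):=\eta(x)$ (well-defined precisely because $\eta(-1)=1$), and extends it to a character of $F^\times$ using that $F^{\times 2}$ has finite index in $F^\times$ and $\C^\times$ is divisible.

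Viewing $\chi_0$ as a character of $\GSp_{2n}(F)$ via $\simi$, the twist $\pi\otimes\chi_0$ still lies in $E(\varpi)$ because $\chi_0\circ\simi$ is trivial on $\Sp_{2n}(F)$, and it now has central character $\omega_\pi\cdot\chi_0^2=\omega_{\pi'}$. Applying the previous case to $\pi\otimes\chi_0$ and $\pi'$ produces a character $\chi_1$ of $G/H$ with $\pi'\cong(\pi\otimes\chi_0)\otimes\chi_1$, hence $\pi'\cong\pi\otimes\chi$ for $\chi=\chi_0\chi_1$. The only non-formal ingredient is the elementary observation that characters of $F^\times$ trivial on $-1$ are squares; apart from this, the argument is a direct application of Lemma \ref{L:general_lemma1}, so I do not anticipate any real obstacle.
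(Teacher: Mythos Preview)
Your proof is correct and follows essentially the same route as the paper: both arguments observe that the two $H$-constituents $\mu\boxtimes\varpi$ and $\mu'\boxtimes\varpi$ differ by a character of $F^\times$ trivial at $-1$, hence by a square $\nu^2$, so that twisting $\pi$ by $\nu$ places $\pi\otimes\nu$ and $\pi'$ over a common $H$-constituent and Lemma~\ref{L:general_lemma1} finishes. The only difference is cosmetic---you explicitly identify $\mu$ as the central character $\omega_\pi$ and separate the equal/unequal central character cases, whereas the paper handles them in one stroke.
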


\begin{proof}
Firstly, for each irreducible representation $\mu\boxtimes\varpi$ of $H$, let us define
\[
   E(\mu\boxtimes\varpi)=\{\pi\in\Pi(\GSp_{2n}): \text{$\pi|_H$ contains $\mu\boxtimes\varpi$} \}. \]
Secondly, assume that $\pi, \pi' \in E(\varpi)$. Then $\pi|_H$ (resp. $\pi'|_H$)
contains $\mu\boxtimes\varpi$ (resp. $\mu'\boxtimes\varpi$) for some
$\mu$ (resp. $\mu'$). So we have
\[ \pi\in E(\mu\boxtimes\varpi) \quad \text{and} \quad  \pi'\in
E(\mu'\boxtimes\varpi). \]
Then $\mu'/\mu$ is trivial on $\{\pm 1\}\subset F^\times$ because
\[ \mu(-1)=\mu'(-1)=\varpi(-1). \]
Hence $\mu'/\mu=\nu^2$ for some
quasicharacter $\nu$, and thus $\mu'\boxtimes \varpi \cong
(\mu\nu^2)\boxtimes\varpi$. Now consider
$\pi\otimes \nu |_H$. This contains
$(\mu\nu^2)\boxtimes\varpi \cong \mu'\boxtimes \varpi$ as a constituent since $\pi\in E(\mu\boxtimes\varpi)$, and so
$\pi\otimes \nu \in E(\mu'\boxtimes\varpi)$. Thus by Lemma \ref{L:general_lemma1},
$\pi'\cong\pi\otimes \nu\chi$ for some quasicharacter
$\chi$. This proves the lemma.
\end{proof}
\vskip 5pt

Now we can define the map
$\mathcal{L} :\Pi(\Sp_{2n})\rightarrow\Phi(\Sp_{2n})$ by
\[
    \mathcal{L}(\varpi)= std(L(\pi)), \text{ for any $\pi\in E(\varpi)$.}
\]
\vskip 5pt
\noindent This is well-defined. Indeed, if $\pi, \pi'\in E(\varpi)$, then the above
lemma implies that $\pi'\cong\pi\otimes \chi$ for some $\chi$,
but by (H2), $L(\pi')=L(\pi)\otimes \chi$ and so
\[
std(L(\pi'))= std(L(\pi)\otimes \chi)=std(L(\pi)). \]
\vskip 10pt

\noindent\underline{\textbf{Surjectivity of the map $\mathcal{L}$}}
\vskip 15pt

The surjectivity of  $\mathcal{L}$ follows from the following.
\vskip 5pt

\begin{Lem}
Any (continuous) projective representation of the Weil-Deligne group
\[
    \rho:W_F\times\SL_2(\C)\rightarrow \PGL_n(\C)
\]
can be lifted to a representation
\[
    \tilde{\rho}:W_F\times\SL_2(\C)\rightarrow \GL_n(\C).
\]
Moreover if $\tilde{\rho}'$ is another lift of $\rho$, then $\tilde{\rho}'=\tilde{\rho}\otimes\chi$ for some character $\chi$.
\end{Lem}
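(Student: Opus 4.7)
My plan is first to dispose of uniqueness, then construct the lift by handling each factor of $W_F \times \SL_2(\C)$ separately and splicing the two partial lifts.

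For uniqueness, given two lifts $\tilde\rho, \tilde\rho'$ of $\rho$, the function $c(g) := \tilde\rho'(g)\tilde\rho(g)^{-1}$ takes values in the kernel $Z(\GL_n(\C)) = \C^\times$ of the projection $\GL_n(\C) \to \PGL_n(\C)$. Because its values are central, $c$ is a genuine homomorphism $W_F \times \SL_2(\C) \to \C^\times$; since $\SL_2(\C)$ is its own commutator subgroup, $c$ must be trivial on the $\SL_2$-factor and therefore descends to a character $\chi$ of $W_F$, whence $\tilde\rho' = \tilde\rho \otimes \chi$.

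For existence, I would lift $\rho_1 := \rho|_{W_F}$ and $\rho_2 := \rho|_{\SL_2(\C)}$ independently. The central extension $1 \to \C^\times \to \GL_n(\C) \to \PGL_n(\C) \to 1$ pulls back along $\rho_2$ to a central extension of $\SL_2(\C)$, which splits because $\SL_2(\C)$ is perfect and simply connected (so it coincides with its own universal central extension); this yields a lift $\tilde\rho_2$. The lift of $\rho_1$ is the main obstacle, and rests on the classical theorem of Tate that the continuous cohomology $H^2(W_F, \C^\times)$ vanishes, which guarantees that every continuous projective representation of the Weil group lifts to a genuine linear representation $\tilde\rho_1$.

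Finally, I would set $\tilde\rho(w, g) := \tilde\rho_1(w)\tilde\rho_2(g)$, which is a homomorphism provided $\tilde\rho_1(w)$ and $\tilde\rho_2(g)$ commute in $\GL_n(\C)$. Their commutator lies in $\C^\times$ and defines a map $W_F \times \SL_2(\C) \to \C^\times$ that, precisely because its values are central, is a homomorphism in each variable separately; fixing $w$, the resulting character of $\SL_2(\C)$ is forced to be trivial by perfectness, so the two partial lifts automatically commute and $\tilde\rho$ is the desired lift of $\rho$.
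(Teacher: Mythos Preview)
Your proposal is correct and follows essentially the same strategy as the paper: lift on each factor separately (using simple-connectedness of $\SL_2(\C)$ and the vanishing of the obstruction for $W_F$, which the paper attributes to Henniart rather than Tate), then verify the two partial lifts commute. The only minor difference is in the commutation step: you argue that the central-valued commutator is a homomorphism in each variable and invoke perfectness of $\SL_2(\C)$, whereas the paper observes the commutator lands in the $n$-th roots of unity (via the determinant) and invokes connectedness of $\SL_2(\C)$ to force it to be constant; both are standard and equally valid.
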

\vskip 5pt

\begin{proof}
Let $r_1=\rho|_{W_F}$ and $r_2=\rho|_{\SL_2(\C)}$ so that $r_1$ and $r_2$
are projective representations of $W_F$ and $\SL_2(\C)$,
respectively. Since $\SL_2(\C)$ is simply-connected, $r_2$ has a unique lift
\[  \tilde{r_2}:\SL_2(\C)\rightarrow \GL_n(\C).\]
Also Henniart \cite{He1} has shown that
$r_1$ has a lift
\[  \tilde{r_1}:W_F\rightarrow\GL_n(\C).\]
Now let us define
\[
    \tilde{\rho}:W_F\times\SL_2(\C)\rightarrow \GL_n(\C)
\]
by
\[
    \tilde{\rho}(g,h)=\tilde{r_1}(g)\tilde{r_2}(h).
\]
To show that $\tilde{\rho}$ is indeed a representation of $WD_F$, it suffices to show that $\tilde{r_1}(g)\tilde{r_2}(h)=\tilde{r_2}(h)\tilde{r_1}(g)$. For this, let us define
\[
    c:W_F\times\SL_2(\C)\rightarrow \GL_n(\C)
\]
by
\[
    c(g,h)=\tilde{r_1}(g) \cdot \tilde{r_2}(h)\cdot \tilde{r_1}(g)^{-1}\cdot \tilde{r_2}(h)^{-1}.
\]
\vskip 5pt

\noindent
We will show that $c(g,h)=1$ for all $g\in
W_F$ and $h\in\SL_2(\C)$. Note that
$c(g,h)$ is in the center of $\GL_n(\C)$ because the image of $c(g,h)$ in $\PGL_n(\C)$ is trivial. Also notice that, by taking the determinant
of $c(g,h)$, we see that $c(g,h)^n=1$. So the image of $c$ is in the
set of $n$-th roots of $1$. Now let us fix $g$. Then the map
$c(g,-)$ is a continuous map from $\SL_2(\C)$ to the $n$-th roots of
$1$. So $c(g,-)$ must be a constant map, since $\SL_2(\C)$ is connected. But
clearly $c(g,1)=1$, and so $c(g,h)=1$.
\vskip 5pt

The latter part of the lemma is obvious.
\end{proof}
\vskip 5pt

Hence we have

\begin{Prop}
Any $\varphi \in \Phi(\Sp_{2n})$ can be lifted to some $\phi \in \Phi(\GSp_{2n})$, and so $\mathcal{L}$ is surjective. Moreover if $\phi'$ is another lift of $\varphi$, then $\phi'=\phi\otimes  \chi$ for some character $\chi$ on $WD_F$.
\end{Prop}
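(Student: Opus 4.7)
The plan is to reduce the proposition to the preceding lemma, which produces lifts of projective representations into $\PGL_n(\C)$. Our $\varphi$ takes values in $\SO_{2n+1}(\C) = \PGSpin_{2n+1}(\C)$, and we wish to lift it through the projection $std : \GSpin_{2n+1}(\C) \twoheadrightarrow \SO_{2n+1}(\C)$ whose kernel is the similitude center $\Gm$. The bridge between these two settings will be supplied by the spin representation of $\GSpin_{2n+1}(\C)$, which is a faithful embedding
\[ \iota : \GSpin_{2n+1}(\C) \hookrightarrow \GL_{2^n}(\C) \]
under which the central $\Gm$ is mapped isomorphically onto the full group of scalar matrices. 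Consequently $\iota$ descends to an embedding $\bar{\iota} : \SO_{2n+1}(\C) \hookrightarrow \PGL_{2^n}(\C)$, and the key compatibility is that the preimage of $\bar{\iota}(\SO_{2n+1}(\C))$ under the canonical projection $\GL_{2^n}(\C) \to \PGL_{2^n}(\C)$ is exactly $\iota(\GSpin_{2n+1}(\C))$. (For $n=2$, the same role is played more simply by $\GSp_4 \subset \GL_4$ and $\PGSp_4 \subset \PGL_4$.)

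Granted this set-up, the first assertion is immediate. Composing $\varphi$ with $\bar{\iota}$ yields a projective representation $\bar{\iota} \circ \varphi : WD_F \to \PGL_{2^n}(\C)$; the preceding lemma supplies a lift $\tilde{\varphi} : WD_F \to \GL_{2^n}(\C)$; and by the preimage description the image of $\tilde{\varphi}$ automatically lies in $\iota(\GSpin_{2n+1}(\C))$. So $\tilde{\varphi} = \iota \circ \phi$ for a unique homomorphism $\phi : WD_F \to \GSpin_{2n+1}(\C)$, and $std \circ \phi = \varphi$ by construction. For the uniqueness clause, if $\phi'$ is another lift of $\varphi$, then $\iota \circ \phi'$ is another lift of $\bar{\iota} \circ \varphi$; the uniqueness part of the preceding lemma forces $\iota \circ \phi' = (\iota \circ \phi) \otimes \chi$ for some character $\chi$ of $WD_F$, and since $\Gm \subset \GSpin_{2n+1}(\C)$ acts by scalars under $\iota$, this pulls back to $\phi' = \phi \otimes \chi$ in $\Phi(\GSp_{2n})$.

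Surjectivity of $\mathcal{L}$ is then a one-line deduction: given $\varphi \in \Phi(\Sp_{2n})$, take a lift $\phi \in \Phi(\GSp_{2n})$ as above, use (H1) to choose any $\pi \in L_\phi$, and let $\varpi$ be any irreducible constituent of $\pi|_{\Sp_{2n}}$. Then $\pi \in E(\varpi)$, and the definition of $\mathcal{L}$ gives $\mathcal{L}(\varpi) = std(L(\pi)) = std(\phi) = \varphi$.

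The chief technical point I expect to have to justify is the structural claim about the spin representation, namely that $\iota(\GSpin_{2n+1}(\C))$ exhausts the preimage of $\bar{\iota}(\SO_{2n+1}(\C))$. This amounts to showing that every scalar matrix in $\GL_{2^n}(\C)$ already lies in $\iota(\GSpin_{2n+1}(\C))$, equivalently that the similitude center $\Gm$ of $\GSpin_{2n+1}(\C)$ acts by the full group $\C^{\times}$ of scalars on the spin representation --- a standard consequence of the construction $\GSpin_{2n+1} = (\Spin_{2n+1} \times \Gm)/\mu_2$ combined with the faithfulness of the spin representation on $\Spin_{2n+1}$.
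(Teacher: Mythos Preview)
Your proposal is correct and follows essentially the same approach as the paper: both use the faithful $2^n$-dimensional spin representation to embed the $\GSpin_{2n+1}/\SO_{2n+1}$ picture into the $\GL_{2^n}/\PGL_{2^n}$ picture and then appeal to the preceding lemma. The paper's proof is terse---it just writes down the commutative square and declares the result immediate---whereas you have spelled out the two points the paper leaves implicit (that the square is Cartesian because $\Gm$ maps onto the full scalar group, and how surjectivity of $\mathcal{L}$ follows), which is fine and arguably clearer.
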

\begin{proof}
The group $\GSpin_{2n+1}(\C)$ has a faithful $2^n$-dimensional Spin representation, so that one has a commutative diagram:
\[  \begin{CD}
\GSpin_{2n+1}(\C) @>>>  \GL_{2^n}(\C) \\
@VVV  @VVV \\
\text{PGSpin}_{2n+1}(\C)  @>>> \PGL_{2^n}(\C).  \end{CD} \]
The proposition is now immediate from the above lemma.
\end{proof}
\vskip 10pt

\noindent\underline{\textbf{The $L$-packet and the component group}}\\

Let us investigate the fibre $\mathcal{L}_{\varphi}$ for each
$\varphi \in \Phi(\Sp_{2n})$. We begin by showing that
\[
    \mathcal{L}_{\varphi}=\bigcup_{\pi\in
    L_{\phi}} \JH(\pi)\quad \text{(disjoint union)},
\]
where $\phi$ is any fixed lift of $\varphi$.
\vskip 5pt

First let us examine the RHS of the above equality. If $\phi$ and $\phi'$ are two different lifts of $\varphi$, then $\phi'=\phi \otimes \chi$ for some
character $\chi$ on $WD_F$. But (H2) implies that we have a
bijection $L_{\phi}\rightarrow L_{\phi'}$ given by $\pi\mapsto
\pi\otimes \chi$ and clearly
$\JH(\pi)=\JH(\pi\otimes \chi)$. Hence the union on the RHS is
independent of the choice of the lift $\phi$.
\vskip 5pt

Also we can see that the union is disjoint. For if $\pi, \pi'\in
L_{\phi}$ are such that there is a $\varpi \in\JH(\pi)\cap\JH(\pi')$,
then by Lemma \ref{L:same_packet}, we have
$\pi'\cong\pi\otimes \chi$ for some character
$\chi$. Then by (H3b), we must have $\pi'\cong\pi$.
 It is now quite easy to see that
\[  \mathcal{L}_{\varphi}=\bigcup_{\pi\in L_{\phi}} \JH(\pi). \]
Namely if $\varpi \in \mathcal{L}_{\varphi}$, then by definition of $\mathcal{L}_{\varphi}$, we must have $\varpi \in \JH(\pi)$ with $\pi\in L_{\phi}$ for some lift $\phi$. Conversely, if
$\varpi \in \JH(\pi)$ with $\pi\in L_{\phi}$ for some lift $\phi$,
then once again by the definition of $\mathcal{L}$, we see that
$\varpi \in\mathcal{L}_{\varphi}$.
\vskip 10pt

Finally, we show that the fibre $\mathcal{L}_{\varphi}$ has the same size as the component group
\[
    A_\varphi=\pi_0(Z(Im(\varphi)))=Z(Im(\varphi))/Z^\circ(Im(\varphi)),
\]
where $Z(Im(\varphi))$ is the centralizer of $Im(\varphi)$ and $Z^\circ(Im(\varphi))$ its connected component. By the above discussion, we see that
\[  \# \mathcal{L}_{\varphi} = \#L_{\phi} \cdot \# \JH(\pi), \]
where $\phi$ is a lift of $\varphi$ and $\pi \in L_{\phi}$.
Now Lemma \ref{L:general_lemma2} and hypothesis (H3) imply that
\[  \# \JH(\pi) = \# I(\pi) = \#I(\phi), \]
and (H1) says that
\[  \# L_{\phi} = \# A_{\phi} = \# Z(Im(\phi))/ Z^0(Im(\phi)). \]
Hence, we need to show that
\[   \# A_{\varphi} =  \#A_{\phi} \cdot \#I(\phi). \]
This is accomplished by the following key proposition:
\vskip 5pt

\begin{Prop}  \label{P:exact}
Let $\phi \in \Phi(\GSp_{2n})$ and set $\varphi = std(\phi)$. Then
there is a short exact sequence:
\vskip 5pt
\[  \begin{CD}
1 @>>> A_{\phi} @>\beta>> A_{\varphi} @>\alpha>> I(\phi) @>>> 1. \end{CD} \]
\end{Prop}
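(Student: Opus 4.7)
The central tool is that the kernel of the projection $std:\GSpin_{2n+1}(\C)\twoheadrightarrow \SO_{2n+1}(\C)$ equals the center $Z(\GSpin_{2n+1}(\C))$, which is the connected torus $\C^{\times}$. Thus one has the central extension
\[
    1\longrightarrow \C^{\times}\longrightarrow \GSpin_{2n+1}(\C)\xrightarrow{std}\SO_{2n+1}(\C)\longrightarrow 1,
\]
and since $\C^{\times}$ lies in every centralizer while $std(Im(\phi))=Im(\varphi)$, conjugation shows that $std$ carries $Z(Im(\phi))$ into $Z(Im(\varphi))$; the induced map on component groups will be $\beta$.

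For $\alpha$, given $y\in Z(Im(\varphi))$ I would pick any lift $g\in \GSpin_{2n+1}(\C)$ and set
\[
    \chi_{g}(x)\;:=\;g\,\phi(x)\,g^{-1}\,\phi(x)^{-1},\qquad x\in WD_F.
\]
Its image in $\SO_{2n+1}(\C)$ is $y\varphi(x)y^{-1}\varphi(x)^{-1}=1$, so $\chi_g(x)\in\C^{\times}$. The centrality of $\C^{\times}$ makes $\chi_g$ a continuous character $WD_F\to\C^{\times}$, and since $\SL_2(\C)$ is perfect it factors through $W_F$, giving a character of $F^{\times}$ by local class field theory. The identity $g\phi(x)g^{-1}=\chi_g(x)\phi(x)$ intertwines $\phi$ with $\phi\otimes\chi_g$, so $\chi_g\in I(\phi)$. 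Replacing $g$ by $gz$ with $z\in\C^{\times}$ does not change $\chi_g$, so we obtain a continuous homomorphism $\tilde\alpha:Z(Im(\varphi))\to I(\phi)$; as $I(\phi)$ is finite, $\tilde\alpha$ is trivial on $Z^{\circ}(Im(\varphi))$ and descends to $\alpha:A_{\varphi}\to I(\phi)$.

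The core of the proof is then the four-term exact sequence
\[
    1\longrightarrow \C^{\times}\longrightarrow Z(Im(\phi))\xrightarrow{std} Z(Im(\varphi))\xrightarrow{\tilde\alpha}I(\phi)\longrightarrow 1.
\]
Exactness at the two middle terms reduces to the definitions: $\chi_g=1$ if and only if $g$ commutes with $Im(\phi)$; and any $\chi\in I(\phi)$ is realized via an intertwiner $g$ with $g\phi g^{-1}=\chi\phi$, whose image $std(g)\in Z(Im(\varphi))$ maps to $\chi$. I would split this into the two short exact sequences
\[
    1\to\C^{\times}\to Z(Im(\phi))\to std(Z(Im(\phi)))\to 1, \qquad 1\to std(Z(Im(\phi)))\to Z(Im(\varphi))\to I(\phi)\to 1,
\]
and apply $\pi_0$. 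Connectedness of $\C^{\times}$ gives $A_{\phi}\xrightarrow{\sim}\pi_0(std(Z(Im(\phi))))$, while discreteness of $I(\phi)$ forces $(std(Z(Im(\phi))))^{\circ}=Z^{\circ}(Im(\varphi))$ and yields $1\to\pi_0(std(Z(Im(\phi))))\to A_{\varphi}\to I(\phi)\to 1$. Splicing these produces the required sequence.

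The main obstacle I anticipate is purely in setting up $\alpha$: verifying that $\chi_g$ is genuinely a homomorphism in $x$ (which uses the centrality of $\C^{\times}$ in an essential way) and checking continuity of $\tilde\alpha$ in $y$ so that it factors through $A_{\varphi}$. Once this is nailed down, the remainder is a formal exercise with $\pi_0$ of short exact sequences of complex algebraic groups, using the standard facts that connected kernels and discrete quotients interact well with $\pi_0$.
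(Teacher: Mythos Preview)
Your proposal is correct and follows essentially the same route as the paper: both construct $\alpha$ via the commutator $\chi_g(x)=g\phi(x)g^{-1}\phi(x)^{-1}$ for a lift $g$ of $y\in Z(Im(\varphi))$, check it lands in $I(\phi)$, define $\beta$ as the map on component groups induced by $std$, and verify the exactness by unwinding the definitions. The only cosmetic differences are that the paper observes $\chi_g$ actually takes values in $\{\pm 1\}=Z_{\GSpin_{2n+1}}\cap\Spin_{2n+1}(\C)$ (since a commutator lies in the derived group), and it handles the continuity of $y\mapsto\chi_g$ and the identity $\ker\beta=Z^{\circ}(Im(\phi))$ by hand using local smooth sections of $std$ and the ``connected kernel, connected quotient $\Rightarrow$ connected total space'' lemma, whereas you package the same content as a four-term exact sequence and an application of $\pi_0$; your anticipated ``main obstacle'' is exactly where the paper spends its effort.
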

\vskip 5pt
Before giving the proof the proposition, let us quote a couple of basic facts about Lie groups, which we shall use for our proof.
\begin{Lem}\label{L:Lie}
Let $G$ be a Lie group and $H\subset G$ a closed subgroup. Then
\begin{enumerate}[(a)]
\item For each $s\in G/H$, there exists an open neighborhood $U$ of $s$ with a smooth (hence continuous) section $U\rightarrow G$, \ie there exist local smooth sections of $G/H$ in $G$.
\item If $H$ and $G/H$ are connected, then $G$ is connected.
\end{enumerate}
\end{Lem}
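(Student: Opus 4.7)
The plan is to invoke Cartan's closed subgroup theorem, which guarantees that $H$ is a Lie subgroup with Lie algebra $\mathfrak{h} \subset \mathfrak{g}$ and that the coset space $G/H$ carries a unique smooth manifold structure making the projection $p: G \to G/H$ a smooth submersion. Once that is in hand, part (a) follows from the inverse function theorem plus left translation, and part (b) is a short fiber-connectedness argument combined with the openness of $p$.

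For (a), I first construct a section near the base point $eH$. Choose a linear complement $\mathfrak{m}$ to $\mathfrak{h}$ in $\mathfrak{g}$. The map $(X, h) \mapsto \exp(X) \cdot h$ from $\mathfrak{m} \times H$ to $G$ has differential at $(0, e)$ equal to the canonical isomorphism $\mathfrak{m} \oplus \mathfrak{h} \cong \mathfrak{g}$, so the inverse function theorem yields an open neighborhood $V$ of $0$ in $\mathfrak{m}$ for which the composite $V \xrightarrow{\exp} G \xrightarrow{p} G/H$ is a diffeomorphism onto an open neighborhood $U_0$ of $eH$; its inverse provides the desired smooth section $s_0: U_0 \to G$. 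For an arbitrary coset $s = gH$, left translation $L_g: G \to G$ descends to a smooth diffeomorphism $\bar{L}_g: G/H \to G/H$ satisfying $p \circ L_g = \bar{L}_g \circ p$, so $L_g \circ s_0 \circ \bar{L}_g^{-1}$ is a smooth section on the open neighborhood $\bar{L}_g(U_0)$ of $s$.

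For (b), I would argue by contradiction: suppose $G = U \sqcup V$ is a decomposition into two nonempty disjoint open subsets. First observe that $p$ is an open map, because for any open $W \subset G$ the saturation $p^{-1}(p(W)) = WH = \bigcup_{h \in H} Wh$ is a union of open right translates and hence open. Consequently $p(U)$ and $p(V)$ are nonempty open subsets of $G/H$ covering it. Moreover, each fiber $p^{-1}(gH) = gH$ is homeomorphic to $H$ and therefore connected, so it must lie entirely in $U$ or entirely in $V$; this forces $p(U)$ and $p(V)$ to be disjoint, contradicting the connectedness of $G/H$. The only substantive difficulty lies inside Cartan's theorem itself; beyond invoking it, the argument is routine. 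The closedness of $H$ is essential — without it, $G/H$ need not even be Hausdorff — and this is the only place the hypothesis enters.
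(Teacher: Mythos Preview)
Your argument is correct. The paper does not actually prove this lemma: it simply cites Warner's textbook, referring to Theorem~3.58(b) for part~(a) and Proposition~3.66 for part~(b). Your proof is the standard one---indeed it is essentially what Warner does---so there is no meaningful divergence in approach, only in level of detail. Your write-up is self-contained where the paper is content to quote a reference.
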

\begin{proof}
For (a) see \cite[Theorem 3.58(b)]{Warner}, and for (b) see \cite[Proposition 3.66]{Warner}.
\end{proof}
\vskip 5pt
Now we are ready to prove the proposition.
\begin{proof}[Proof of Proposition \ref{P:exact}]
The proof follows the lines of the proof of \cite[Theorem 4.3]{GK}.
We shall define  a group homomorphism
\[    Z(Im(\varphi))\rightarrow I(\phi),\quad s\mapsto \chi_s,
\]
as follows. First of all, let us define a
function $\chi_s:WD_F \rightarrow\GSpin_{2n+1}(\C)$ by
\[
    \chi_s(w)=\tilde{s} \cdot \phi(w) \cdot \tilde{s}^{-1} \cdot \phi(w)^{-1}\quad\text{for $w\in WD_F$,}
\]
where $\tilde{s}$ is a lift of $s$ to $\GSpin_{2n+1}(\C)$. It is clear that $\chi_s$ is
well-defined.  We now note the following properties:

\vskip 5pt

\begin{enumerate}[$\bullet$]
\item $\chi_s$ takes values in $\{ \pm 1 \}$ in the center of $\GSpin_{2n+1}(\C)$.
Indeed, since $std \circ \chi_s=1$, we have
$\chi_s(w)\in Z_{\GSpin_{2n+1}}=\C^\times$. On the other hand, $\chi_s(w)$ is a commutator and thus lies in the derived group $\Spin_{2n+1}(\C)$ of $\GSpin_{2n+1}(\C)$. Hence
\[  \chi_s(w) \in Z_{\GSpin_{2n+1}} \cap \Spin_{2n+1}(\C) = \{ \pm 1 \}. \]

\vskip 5pt

\item $\chi_s$ is a homomorphism, and hence by the above assertion a quadratic character of $WD_F$. This is because
\begin{align*}
    \chi_s(ww')&=\tilde{s} \cdot \phi(ww') \cdot \tilde{s}^{-1} \cdot \phi(ww')^{-1}\\
    &=\tilde{s} \cdot \phi(w)\cdot \tilde{s}^{-1} \cdot \left(\tilde{s}\phi(w')\tilde{s}^{-1}
    \phi(w')^{-1}\right)\phi(w)^{-1}\\
    &=\tilde{s} \cdot \phi(w) \cdot \tilde{s}^{-1}\cdot \phi(w)^{-1}\left(\tilde{s}\phi(w')\tilde{s}^{-1}
    \phi(w')^{-1}\right),
    \text{ because $\tilde{s}\cdot \phi(w')\cdot \tilde{s}^{-1}\cdot \phi(w')^{-1}\in\C^{\times}$}\\
    &=\chi_s(w) \cdot \chi_s(w').
\end{align*}
\vskip 5pt

\item $\chi_s \in I(\phi)$. Indeed, since
\[  \tilde{s} \cdot \phi(w) \cdot \tilde{s}^{-1}=\chi_s(w)\cdot \phi(w), \]
we have
\[  \phi\cong\phi\otimes \chi_s \quad \text{ as elements of $\Phi(\GSp_4)$,} \]
and thus  $\chi_s\in I(\phi)$. The assignment $s \mapsto \chi_s$ thus gives a map
\[  \alpha: Z(Im(\varphi)) \longrightarrow I(\phi). \]
\vskip 5pt

\item The map $\alpha$ is a group homomorphism.
This is proved in a similar way as the proof above that $\chi_s$ is a character.
\vskip 5pt

\item The homomorphism  $\alpha$ factors through $A_{\varphi}$, so that we have
\[  \alpha: A_{\varphi} \longrightarrow I(\phi). \]
To see this, note that for each $w \in WD_F$, the map $s \mapsto \chi_s(w)$ is a continuous homomorphism
\[   Z(Im(\varphi))\longrightarrow \{ \pm 1 \}, \]
because by Lemma \ref{L:Lie} (a) one can choose a local continuous section $s\mapsto\tilde{s}$, and thus it must be trivial on $Z^0(Im(\varphi))$.
\vskip 5pt

\item The homomorphism $\alpha$ is surjective.
This is similar to \cite[Theorem 4.3]{GK}. Namely, if $\chi \in I(\phi)$, then $\phi\cong\phi\otimes \chi$, \ie there exists $\tilde{s} \in \GSpin_{2n+1}(\C)$ such that
\[
	\tilde{s} \cdot \phi(w) \cdot \tilde{s}^{-1}=\chi(w) \cdot \phi(w)
\]
for all $w \in WD_F$. If we let $s=std(\tilde{s})$, we have $\chi_s=\chi$.
\end{enumerate}
  \vskip 10pt

We now examine the kernel of the map $\alpha$. Observe that, via the projection map $std$, one has a natural map
\[   \beta: Z(Im(\phi)) \longrightarrow Z(Im(\varphi)) \longrightarrow A_{\varphi}.  \]
The map $\beta$ has the following properties:
\vskip 5pt

\begin{enumerate}[$\bullet$]
\item The image of $\beta$ is precisely the kernel of $\alpha$. Indeed, if $\tilde{s}\in Z(Im(\phi))$, then
$\tilde{s} \cdot \phi(w) \cdot \tilde{s}^{-1}=\phi(w)$ for all $w\in
WD_F$,  so that
\[  \chi_{std(\tilde{s})}(w)=1 \quad \text{for all $w\in WD_F$.} \]
Hence the image of $\beta$ is contained in the kernel of $\alpha$. Conversely,
 let $s\in\ker \alpha$, so that $\chi_s(w)=1$
for all $w\in WD_F$. Then $\tilde{s} \cdot \phi(w) \cdot \tilde{s}^{-1}=\phi(w)$ for
a lift $\tilde{s}$ of $s$. So $\tilde{s} \in Z(Im(\varphi))$ and $\beta(\tilde{s}) = s$. This shows that the image of $\beta$ is precisely $\ker \alpha$.
 \vskip 10pt

\item The kernel of $\beta$ is equal to $Z^0(Im(\phi))$, so that $\beta$ induces an injection
 \[  \beta: A_{\phi} \longrightarrow A_{\varphi}. \]
 It is clear that $Z^0(Im(\phi)) \subset \ker\beta$, since $std$ maps $Z^0(Im(\phi))$ into $Z^0(Im(\varphi))$. It remains to show the reverse containment.
Let $\tilde{s}\in\ker\beta$, so that $std(\tilde{s})\in Z^0(Im(\varphi))$. Now observe that
\[  Z^0(Im(\phi))  \subseteq std^{-1}(Z^0(Im(\varphi)))  \subseteq Z(Im(\phi)).  \]
The first containment is what we have just observed. For the second, suppose that
\[  \text{$\tilde{s}\in std^{-1}(Z^0(Im(\varphi)))$ so that
$s = std(\tilde{s}) \in Z^0(Im(\varphi))$.} \]
Then we have seen that $\chi_s$ is trivial, so that
$\tilde{s}$ commutes with $Im(\phi)$, as desired. Note that since $\C^\times\subset std^{-1}(Z^0(Im(\varphi)))$ is closed and $std^{-1}(Z^0(Im(\varphi)))/\C^\times=Z^0(Im(\varphi))$ is connected, by Lemma \ref{L:Lie} (b) we know that $std^{-1}(Z^0(Im(\varphi)))$ is connected. Hence the first containment is an equality and so $\tilde{s} \in Z^0(Im(\phi))$.
\end{enumerate}
\vskip 5pt

The proposition is proved.
\end{proof}
\vskip 10pt

This completes the proof of
Theorem \ref{T:LLC_Sp}.\\


It is natural to ask if one can naturally index the elements of the L-packet $\mathcal{L}_{\varphi}$ by the Pontrjagin dual $\widehat{A_{\varphi}}$ of $A_{\varphi}$, say in the case of $\Sp_4$. It is well-known that to fix such a parametrization, one has to choose a generic character $\psi$ of the unipotent radical of a Borel subgroup of $\Sp_4(F)$.
Suppose that one has fixed such a choice of generic character $\psi$. Proposition \ref{P:exact} implies that
\[  \begin{CD}
1 @>>> \widehat{I(\phi)} @>>> \widehat{A_{\varphi}} @>>> \widehat{A_{\phi}} @>>> 1. \end{CD} \]
Moreover, we already know from \cite{GT} that $\widehat{A_{\phi}}$ naturally indexes the elements in the L-packet $L_{\phi}$ (which has size 1 or 2). Thus, it is completely natural to insist that for a character $\eta$ of $A_{\phi}$, the constituents of $\pi_{\eta}|_{\Sp_4}$ correspond to those elements of $\widehat{A_{\varphi}}$ which restrict to $\eta$. If $L_{\phi}$ contains a generic representation $\pi$ of $\GSp_4(F)$ (which corresponds to the trivial character of $A_{\phi}$), then we can further insist that the trivial character of $A_{\varphi}$ indexes the unique $\psi$-generic constituent in $\pi|_{\Sp_4}$.
Since $\widehat{I(\phi)} = \GSp_4(F)/ N_{\pi}$ acts transitively on the constituents of $\pi|_{\Sp_4}$,
we then obtain a parametrization of the constituents of  $\pi|_{\Sp_4}$ by the elements of $\widehat{A_{\varphi}}$ which restricts trivially to $A_{\phi}$.
\vskip 5pt

If $L_{\phi}$ contains another representation $\pi'$ (which is necessarily non-generic), then
we do not know how to parameterize the constituents of $\pi'|_{\Sp_4}$ by the remaining characters of
$A_{\varphi}$. Indeed, the set of constituents of $\pi'|_{\Sp_4}$ and the set of characters of $A_{\varphi}$ with non-trivial restriction to $A_{\phi}$ are both principal homogeneous spaces over $\widehat{I(\phi)}$,
and the choice of a base point in each will provide a bijection between them. However, it seems the only natural way to do this is via the character relations predicted by the theory of (twisted) endoscopy.
The recent work of Hiraga-Saito \cite{HS} on the local Langlands correspondence for inner forms of $\SL_n$
may shed some light on this matter.

 \vskip 10pt

\begin{Rmk}
Let us mention here that our derivation of the local Langlands conjecture for $\Sp_{2n}$ from that for $\GSp_{2n}$ works for other pairs of groups such as $(\GSO_n, \SO_n)$ and $(\GSpin_{2n}, \Spin_{2n})$ as long as the multiplicity free result of \cite{AD} is obtained for the relevant groups.
\end{Rmk}

\vskip 15pt


\section{\bf  Properties of $\mathcal{L}$}  \label{S:properties}


\vskip 5pt

To prove the Main Theorem in the introduction, it remains to verify the properties (i)-(vi) in the Main Theorem. In fact, many of these follow immediately from the construction of $\mathcal{L}$ and the analogous properties of the map $L$ for $\GSp_4$. We treat each property in turn:
\vskip 10pt

\begin{enumerate}[(i)]
\item Let us treat the preservation of local factors described in property (i).
We begin by recalling the precise definition of the local factors on the LHS of property (i).
Let $\varpi$ and $\sigma$ be irreducible admissible representations of $\Sp_4(F)$ and $\GL_r(F)$ respectively. If $\varpi$ and $\sigma$ are both generic, then  Shahidi has defined  in \cite[Theorem 3.5]{Shahidi} the local factors associated to $\varpi \boxtimes \sigma$ of $\Sp_4(F) \times \GL_r(F)$ and the standard representations of the dual group $\SO_5(\C) \times \GL_r(\C)$, which appears on the LHS of (i)  (by an analysis of the Plancherel measure treated in (ii) below).
One of the key properties of these local factors of Shahidi is that of multiplicativity of the local gamma factors under parabolic induction; for a discussion of this  property in the case of interest, the reader can consult \cite[Lecture 3, Pg. 318,  Example 1]{Sha2}. The L-factor and $\epsilon$-factor, on the other hand, only satisfy multiplicativity for the formation of standard modules.
\vskip 10pt

To extend the definition of the local factors beyond generic representations, we make use of the Langlands classification and multiplicativity.
If $\varpi \boxtimes \sigma$ is non-generic and non-supercuspidal, then by the Langlands classification, it is a quotient of a standard module $\text{Ind}_P^{\Sp_4 \times \GL_r} \tau$ with $\tau$ a
discrete series representation up to twist.  Since the Levi factor of any proper parabolic subgroup
of $\Sp_4 \times \GL_r$ is a product of $\GL_n$'s and perhaps $\SL_2$, $\tau$ is a generic representation (as any discrete series representation of $\GL_n$ or $\SL_2$ is generic). Thus, we may define the local factors associated to $\varpi \boxtimes \sigma$ by insisting that multiplicativity holds for such standard modules.  Again, we refer the reader to \cite[Pg. 318, Example 1]{Sha2} for the precise statements.
\vskip 10pt

In the case $r = 1$, there is an alternative definition of the local factors on the LHS of (i) via the doubling method of Piatetski-Shapiro and Rallis; a definitive treatment of this can be found in \cite{LR}.
Though it covers only the case $r = 1$, it has the advantage that the local factors can be defined for all representations $\varpi \boxtimes \sigma$, and not just for the generic ones. Moreover, these local factors also satisfy multiplicativity and agree with Shahidi's local factors when the representations involved are generic.
\vskip 10pt

With the above definitions,
the preservation of the local factors follows readily from \cite[Thm. 8.3]{GT} and \cite[Lemma 4.2]{GT}.

\vskip 10pt

\item  To establish property (ii) in the Main Theorem,  let us briefly recall the notion of the Plancherel measure in our context, and prove a few facts on the Plancherel measure necessary for our purposes.
\vskip 5pt

Let $\varpi$ be an irreducible admissible representation of $\Sp_4(F)$ and $\sigma$ a representation of $\GL_r(F)$, so that $\varpi\boxtimes\sigma$ is a representation of
\[
    M_r(F):=\Sp_4(F)\times\GL_r(F).
\]
Now $M_r$ is the Levi factor of a maximal parabolic subgroup $P_r=M_r\cdot N_r$ of $G_r=\Sp_{2r+4}$, so  that one can form the generalized principal series representation
\[
    I_{P_r}(s,\varpi\boxtimes\sigma)=\Ind_{P_r}^{G_r}\varpi\boxtimes\sigma|\det|^s\quad\text{(normalized induction)},
\]
where $\det$ is the determinant character of $\GL_r$. If $\bar{P_r}=M_r\cdot \bar{N_r}$ is the opposite parabolic, then we similarly have the induced representation $I_{\bar{P}_r}(s,\varpi\boxtimes\sigma)$. There is a standard intertwining operator
\[
    A_{\psi}(s,\varpi\boxtimes\sigma, N_r,\bar{N}_r):I_{P_r}(s,\varpi\boxtimes\sigma)\rightarrow I_{\bar{P}_r}(s,\varpi\boxtimes\sigma),
\]
defined by
\[
    A_{\psi}(s,\varpi\boxtimes\sigma, N_r,\bar{N}_r)f(g)=\int_{\bar{N}_r}f(\bar{n}g)\,d\bar{n}_{\psi}
\]
for $f\in I_{P_r}(s,\varpi\boxtimes\sigma)$. Then the composite $A_{\psi}(s,\varpi\boxtimes\sigma, \bar{N}_r,N_r)\circ A_{\psi}(s,\varpi\boxtimes\sigma, N_r,\bar{N}_r)$ is a scalar operator on $I_{P_r}(s,\varpi\boxtimes\sigma)$ for generic $s$ and the Plancherel measure is the scalar-valued meromorphic function defined by
\[
    \mu(s,\varpi\boxtimes\sigma, \psi)^{-1}=A(s,\varpi\boxtimes\sigma, \bar{N}_r,N_r)\circ A(s,\varpi\boxtimes\sigma, N_r,\bar{N}_r).
\]

By results of Shahidi \cite[Thm. 3.5]{Shahidi} and Henniart \cite{He4}, we have
\begin{Prop}\label{P:measure1}
Suppose that $\varpi\boxtimes\sigma$ is a generic representation of $M_r(F)=\Sp_4(F)\times\GL_r(F)$. Then, for appropriate measures $dn_{\psi}$ and $d \bar{n}_{\psi}$ on $N_r$ and $\bar{N}_r$ respectively,  $\mu(s,\varpi\boxtimes\sigma, \psi)$ is equal to
\begin{align*}
    &\gamma(s,\varpi^\vee\boxtimes\sigma,\psi)\cdot\gamma(-s,\varpi\boxtimes\sigma^\vee,
    \overline{\psi})
    \cdot\gamma(2s,\sigma,\bigwedge^2,\psi)\cdot\gamma(-2s,\sigma^\vee,\bigwedge^2, \overline{\psi})\\
    =&\gamma(s,\varphi_\varpi^\vee\otimes\phi_\sigma,\psi)\cdot\gamma(-s,\varphi_\varpi\otimes\phi_\sigma^\vee,\overline{\psi})
    \cdot\gamma(2s,\bigwedge^2 \phi_{\sigma},\psi)\cdot\gamma(-2s,\bigwedge^2 \phi_{\sigma}^{\vee},\overline{\psi}),
\end{align*}
where $\varphi_\varpi$ is the L-parameter for $\varpi$ by our construction and $\phi_\sigma$ is that for $\sigma$.
\end{Prop}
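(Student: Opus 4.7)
The plan is to invoke Shahidi's general formula expressing the Plancherel measure attached to a maximal parabolic as a product of local gamma factors indexed by the Levi's adjoint action on the Lie algebra of the dual unipotent radical, and then translate each resulting Shahidi gamma factor into a Galois-theoretic one using results already in hand.

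First I would work out the relevant decomposition on the dual side. We have $\widehat{\Sp_{2r+4}} = \SO_{2r+5}(\C)$ containing the dual Levi $\GL_r(\C) \times \SO_5(\C)$, and the Lie algebra of the unipotent radical of the corresponding parabolic breaks up under the adjoint action into two irreducible pieces: $r_1 = std_r \otimes std_5$ (of dimension $5r$) sitting in the height-one graded piece, and $r_2 = {\bigwedge}^2 std_r$ (of dimension $r(r-1)/2$) sitting in the height-two graded piece of the natural $\Z$-grading defined by the Levi. Shahidi's Theorem 3.5 of \cite{Shahidi} applied to this Levi then produces
\[ \mu(s,\varpi\boxtimes\sigma,\psi) = \gamma(s,\varpi\boxtimes\sigma,r_1,\psi)\,\gamma(-s,\varpi^\vee\boxtimes\sigma^\vee,r_1,\overline{\psi})\,\gamma(2s,\sigma,{\bigwedge}^2,\psi)\,\gamma(-2s,\sigma^\vee,{\bigwedge}^2,\overline{\psi}), \]
where the appearance of ``$2s$'' in the last two factors reflects the fact that $r_2$ lives in the height-two piece of the grading. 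This gives the first line of the proposition after a change of notation.

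Next I would convert each factor individually. The first two factors are Shahidi gamma factors of the pair $(\varpi,\sigma)$ for the standard-tensor-standard representation of $\SO_5(\C)\times\GL_r(\C)$; these are matched with $\gamma(s,\varphi_\varpi\otimes\phi_\sigma,\psi)$ and its $s\mapsto -s$ analog by property (i) of the Main Theorem, whose verification has just been carried out (for generic $\varpi\boxtimes\sigma$ this is the direct statement, requiring no appeal to the Langlands-classification step). The remaining two factors are Shahidi's exterior-square gamma factors for $\GL_r$, and by Henniart's theorem \cite{He4} on the preservation of ${\bigwedge}^2$-gamma factors under the local Langlands correspondence for $\GL_r$, they coincide with $\gamma(2s,{\bigwedge}^2\phi_\sigma,\psi)$ and $\gamma(-2s,{\bigwedge}^2\phi_\sigma^\vee,\overline{\psi})$. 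Assembling these four identifications yields the second expression for $\mu(s,\varpi\boxtimes\sigma,\psi)$.

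The main technical obstacle is really just the bookkeeping: working out the decomposition $r = r_1 \oplus r_2$ on the dual side together with the grading that governs whether a given piece contributes a gamma factor in $s$ or in $2s$, and choosing the measures $dn_\psi$, $d\bar{n}_\psi$ consistently so that Shahidi's normalization of the Plancherel measure matches the one used in the statement. Once these compatibilities are pinned down, the proof is a direct assembly of Shahidi's theorem, property (i) of the Main Theorem, and Henniart's exterior-square result.
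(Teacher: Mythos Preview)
Your proposal is correct and matches the paper's approach: the paper simply states the proposition as a consequence of Shahidi \cite[Thm.~3.5]{Shahidi} and Henniart \cite{He4}, and you have correctly unpacked what those citations supply --- Shahidi's theorem yields the first line via the adjoint decomposition $r_1 \oplus r_2$ on the dual side, and the passage to Galois-side gamma factors uses property (i) of the Main Theorem (just verified) for the $\SO_5 \times \GL_r$ factors together with Henniart's result for the exterior-square factors. The only cosmetic discrepancy is the placement of the contragredient on $\varpi$ versus $\sigma$ in the first factor, which as you note is absorbed in the change of notation.
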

 \vskip 5pt

Now the same consideration can be applied to the group
\[
    M_r'(F)=\GSp_4(F)\times\GL_r(F),
\]
which is viewed as the Levi factor of a maximal parabolic subgroup $P'_r=M_r'\cdot N_r'$ of $G_r'=\GSp_{2r+4}$. Thus, for an irreducible admissible representation $\pi\boxtimes\sigma$, we have the principal series representations $I_{P_r'}(s,\pi\boxtimes\sigma)$ and $I_{\bar{P}_r'}(s,\pi\boxtimes\sigma)$, and the intertwining operator
\[
    A_{\psi}(s,\pi\boxtimes\sigma, N'_r,\bar{N'}_r):I_{P_r}(s,\pi\boxtimes\sigma)\rightarrow I_{\bar{P'}_r}(s,\pi\boxtimes\sigma),
\]
defined by
\[
    A_{\psi}(s,\pi\boxtimes\sigma, N'_r,\bar{N}'_r)f(g)=\int_{\bar{N}'_r}f(\bar{n}g)\,d\bar{n}_{\psi}.
\]
Then the Plancherel measure $\mu(s,\pi\boxtimes\sigma, \psi)$ is defined in the same way as the $\Sp_4$ case. Then we have
\vskip 5pt

\begin{Lem}
Let $\varpi$ and $\pi$ be irreducible admissible representations of $\Sp_4(F)$ and $\GSp_4(F)$, respectively, such that the restriction $\pi|_{\Sp_4(F)}$ contains $\varpi$ as a constituent. Then for any irreducible admissible representation $\sigma$ of $\GL_r(F)$, we have
\[
    \mu(s,\varpi\boxtimes\sigma, \psi)=\mu(s,\pi\boxtimes\sigma, \psi).
\]
\end{Lem}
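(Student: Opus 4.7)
The plan is to compare the intertwining operators defining the two Plancherel measures directly. The key observation is that the defining integrals take place over the \emph{same} unipotent radical: since unipotent radicals of parabolic subgroups are contained in the derived group $\Sp_{2r+4} \subset \GSp_{2r+4}$, one has $N_r = N_r'$ and $\bar{N}_r = \bar{N}_r'$, the measures $d\bar{n}_{\psi}$ can be chosen compatibly, and $P_r = P_r' \cap \Sp_{2r+4}$, $M_r = M_r' \cap \Sp_{2r+4}$.

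First I would establish a Mackey-type decomposition. Since the similitude character is already surjective on $P_r'(F)$, we have $P_r'(F)\cdot \Sp_{2r+4}(F) = \GSp_{2r+4}(F)$, and Mackey theory produces a canonical $\Sp_{2r+4}(F)$-equivariant isomorphism
\[
I_{P_r'}(s, \pi \boxtimes \sigma)\big|_{\Sp_{2r+4}(F)} \;\cong\; \Ind_{P_r}^{\Sp_{2r+4}}\!\bigl((\pi\boxtimes\sigma)|_{M_r(F)} \otimes |\det|^s\bigr).
\]
Decomposing $\pi|_{\Sp_4(F)} = \bigoplus_i \varpi_i$ using the multiplicity-freeness theorem of \cite{AD}, and writing $\varpi = \varpi_1$, the right-hand side becomes $\bigoplus_i I_{P_r}(s, \varpi_i \boxtimes \sigma)$.

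Next I would observe that the operator $A_{\psi}(s, \pi\boxtimes\sigma, N_r', \bar{N}_r')$ is defined by an integral over $\bar{N}_r' = \bar{N}_r$ that only evaluates $f$ at points $\bar{n} g$, so it automatically respects the above decomposition; on each summand $I_{P_r}(s, \varpi_i \boxtimes \sigma)$ the integral formula coincides, term by term, with $A_{\psi}(s, \varpi_i \boxtimes \sigma, N_r, \bar{N}_r)$. The same is true for the opposite intertwining operator, hence for their composition. Since this composition acts on the entire space $I_{P_r'}(s, \pi\boxtimes\sigma)$ as the scalar $\mu(s, \pi\boxtimes\sigma, \psi)^{-1}$, it acts as that same scalar on each summand, where by definition it is also $\mu(s, \varpi_i\boxtimes\sigma, \psi)^{-1}$. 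This forces $\mu(s, \varpi_i\boxtimes\sigma, \psi) = \mu(s, \pi\boxtimes\sigma, \psi)$ for every $i$, and in particular for $\varpi$.

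The main technical obstacle I anticipate is verifying the Mackey decomposition above as an identity of \emph{normalized} parabolically induced representations; this reduces to checking that $\delta_{P_r'}|_{M_r(F)} = \delta_{P_r}$, which follows from $N_r' = N_r$ together with the fact that the adjoint action of $M_r$ on $N_r$ is just the restriction of the adjoint action of $M_r'$ on $N_r'$. Everything else is block-matrix bookkeeping.
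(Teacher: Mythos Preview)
Your proposal is correct and follows essentially the same approach as the paper's proof: decompose $I_{P_r'}(s,\pi\boxtimes\sigma)|_{\Sp_{2r+4}(F)}$ as $\bigoplus_i I_{P_r}(s,\varpi_i\boxtimes\sigma)$, observe that the intertwining operator (an integral over $\bar N_r' = \bar N_r$) restricts summand-by-summand to the corresponding $\Sp$-intertwining operator, and conclude by comparing the scalars. You have in fact supplied more justification than the paper does, spelling out the Mackey step, the equality $N_r' = N_r$, and the compatibility $\delta_{P_r'}|_{M_r} = \delta_{P_r}$ needed for the normalized inductions to match.
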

\vskip 5pt

\begin{proof}
Let $\pi|_{\Sp_4(F)}=\oplus_i\varpi_i$, where each $\varpi_i$ is irreducible. Then we have
\[
    I_{P_r'}(s,\pi\boxtimes\sigma)|_{\Sp_{2r+4}(F)}=\bigoplus_i I_{P_r}(s,\varpi_i\boxtimes\sigma).
\]
Now looking at the integrals defining $A_{\psi}(s,\varpi_i\boxtimes\sigma, N_r,\bar{N}_r)$ and $A_{\psi}(s,\pi\boxtimes\sigma, N'_r,\bar{N'}_r)$, one immediately knows that
\[
    A_{\psi}(s,\pi\boxtimes\sigma, N'_r,\bar{N'}_r)|_{I_{P_r}(s,\varpi_i\boxtimes\sigma)}=A_{\psi}(s,\varpi_i\boxtimes\sigma, N_r,\bar{N}_r).
\]
Hence we have
\[
    \mu(s,\varpi_i\boxtimes\sigma, \psi)=\mu(s,\pi\boxtimes\sigma, \psi)
\]
for each $i$. Since $\varpi=\varpi_i$ for some $i$, the lemma follows.
\end{proof}
\vskip 5pt

Then we have:
\vskip 5pt

\begin{Prop}\label{P:measure2}
Let $\varpi$ and $\varpi'$ be in the same $L$-packet of our construction with $\varpi$ nongeneric supercuspidal, \ie $\mathcal{L}(\varpi)=\mathcal{L}(\varpi')$. Then for any supercuspidal representation $\sigma$ of $\GL_r(F)$ for any $r$, we have
\[
    \mu(s,\varpi\boxtimes\sigma,\psi)=\mu(s,\varpi'\boxtimes\sigma, \psi).
\]
\end{Prop}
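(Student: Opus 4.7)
\emph{Plan.} The strategy is to apply the preceding Lemma twice to reduce the claim to an analogous equality at the $\GSp_4(F)$-level, and then to invoke properties of $\GSp_4$ $L$-packets to finish. Setting $\varphi = \mathcal{L}(\varpi) = \mathcal{L}(\varpi')$ and fixing a lift $\phi \in \Phi(\GSp_4)$ with $std(\phi) = \varphi$, the disjoint union decomposition $\mathcal{L}_{\varphi} = \bigsqcup_{\pi \in L_{\phi}} \JH(\pi)$ from Theorem \ref{T:LLC_Sp} lets us pick $\pi, \pi' \in L_{\phi}$ (possibly equal) with $\varpi \in \JH(\pi)$ and $\varpi' \in \JH(\pi')$. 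Two applications of the preceding Lemma then reduce the proposition to the identity
\[ \mu(s, \pi \boxtimes \sigma, \psi) = \mu(s, \pi' \boxtimes \sigma, \psi) \]
of Plancherel measures attached to the Levi $\GSp_4(F) \times \GL_r(F)$ of $\GSp_{2r+4}(F)$.

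If $\pi = \pi'$, which covers in particular the case $|L_{\phi}| = 1$, there is nothing to prove. Otherwise $L_{\phi} = \{\pi, \pi'\}$ has size $2$, and by the argument verifying (H3) in the earlier Proposition, exactly one member of $L_{\phi}$ is generic; say $\pi$ is generic and $\pi'$ is the non-generic supercuspidal member. For the generic $\pi$, the $\GSp_4$-analog of Proposition \ref{P:measure1} (Shahidi's Plancherel-measure formula combined with Henniart's identification of Shahidi $\gamma$-factors with Artin $\gamma$-factors) expresses $\mu(s, \pi \boxtimes \sigma, \psi)$ as the explicit product of four Artin $\gamma$-factors in $\phi$ and $\phi_{\sigma}$ that appears on the right-hand side of property (ii).

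The principal obstacle is to establish the same formula for the non-generic supercuspidal $\pi' \in L_{\phi}$, where Shahidi's formula does not directly apply. For this I would appeal to the explicit description of size $2$ $\GSp_4$ $L$-packets given in \cite{GT}: the non-generic member $\pi'$ is realized there as a theta lift paired with the generic member $\pi$ in a way that exhibits a compatible identification between the induced representations and the standard intertwining operators for $\pi$ and $\pi'$ (up to a common scalar). This forces $\mu(s, \pi' \boxtimes \sigma, \psi)$ to equal the same gamma-factor product as for $\pi$, and combining the two cases yields the desired equality.
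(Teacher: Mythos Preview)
Your reduction is exactly the paper's: lift $\varpi,\varpi'$ to $\pi,\pi'\in L_\phi$ via Theorem~\ref{T:LLC_Sp}, apply the preceding Lemma twice, and reduce to the $\GSp_4$-level identity $\mu(s,\pi\boxtimes\sigma,\psi)=\mu(s,\pi'\boxtimes\sigma,\psi)$. At that point the paper finishes in a single stroke by invoking \cite[Thm.~9.6]{GT}, which asserts precisely that Plancherel measures are constant on a $\GSp_4$ $L$-packet.

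Your final paragraph, however, attempts to re-argue this $\GSp_4$-level equality rather than cite it, and that is where the proposal becomes soft. The claim that the theta-lift realization of the non-generic member $\pi'$ ``exhibits a compatible identification between the induced representations and the standard intertwining operators for $\pi$ and $\pi'$ (up to a common scalar)'' is not a standard consequence of the theta correspondence and is not how \cite{GT} establishes the result; no such direct intertwining-operator comparison is available just from knowing that $\pi'$ is a theta lift. What you are really using is the content of \cite[Thm.~9.6]{GT} itself, and the honest move is simply to cite it, as the paper does, rather than to offer an imprecise mechanism for it.
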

\begin{proof}
 By our construction of $L$-packets, we can find irreducible admissible representations $\pi$ and $\pi'$ of $\GSp_4(F)$ so that $\varpi \subset \pi$,
$\varpi' \subset \pi'$ and $\pi$ and $\pi'$ are in the same $L$-packet (possibly $\pi=\pi'$) as in \cite{GT}. Then by  \cite[Thm. 9.6]{GT},  one knows that
\[
    \mu(s,\pi\boxtimes\sigma, \psi)=\mu(s,\pi'\boxtimes\sigma, \psi).
\]
Hence, the proposition follows by the previous lemma.
 \end{proof}

\vskip 10pt

Finally, the property (ii) of the Main Theorem follows from Props. \ref{P:measure1}
and \ref{P:measure2}.

 \vskip 10pt

\item If $\varpi$ is a constituent of $\pi|_{\Sp_4}$, then
$\varpi$ is discrete series if and only if $\pi$ is essentially discrete series. Similarly, $\varphi = std \circ \phi \in \Pi(\Sp_4)$ does not factor through any proper parabolic subgroup of $\SO_5(\C)$ if and only if $\phi$ does not factor through any proper parabolic subgroup of $\GSp_4(\C)$.
From these,  the property (iii) of the Main Theorem is an immediate consequence of \cite[Main Theorem (i)]{GT}.
\vskip 10pt

\item The property (iv) is a direct consequence of \cite[Main Theorem (vii)]{GT} and the definition of $\mathcal{L}$.

\vskip 10pt

\item The property (v) follows immediately by the construction of the  map $\mathcal{L}$; the uniqueness of $\mathcal{L}$ satisfying (v) is clear.
\end{enumerate}

The remaining property (vi), i.e. the characterization of the map $\mathcal{L}$, will be shown in the next section.

\vskip 15pt


\section{\bf Characterization of the Map $\mathcal{L}$}

In this section we show that our map $\mathcal{L}:\Pi(\Sp_4)\rightarrow\Phi(\Sp_4)$ is uniquely characterized by some of the properties of $\L$. Namely we prove
\begin{Thm}  \label{T:unique}
There is at most one map
\[
    \L:\Pi(\Sp_4)\rightarrow\Phi(\Sp_4)
\]
satisfying:
\begin{enumerate}[(a)]
\item $\varpi$ is a discrete series representation if and only if $\varphi_\varpi:=\L(\varpi)$ does not factor through any proper Levi subgroup of $\SO_5(\C)$.
\vskip 5pt

\item if $\varpi$ is generic or non-supercuspidal, then for any irreducible representation $\sigma$ of $\GL_r(F)$ with $r\leq 3$,
\[
    \begin{cases} L(s,\varpi\times\sigma)=L(s,\varphi_\varpi\otimes\phi_\sigma)\\
     \epsilon(s,\varpi\times\sigma, \psi)=\epsilon(s,\varphi_\varpi\otimes\phi_\sigma, \psi).\end{cases}
\]
\item if $\varpi$ is non-generic supercuspidal, then for any supercuspidal representation $\sigma$ of $\GL_r(F)$ with $r\leq 4$, the Plancherel measure $\mu(s,\varpi\boxtimes\sigma,\psi)$ is equal to
\[
    \gamma(s,\varphi_\varpi^\vee\otimes\phi_\sigma,\psi)\cdot\gamma(-s,\varphi_\varpi\otimes\phi_\sigma^\vee,\overline{\psi})
    \cdot\gamma(2s,\bigwedge^2\phi_{\sigma},\psi)\cdot\gamma(-2s,\bigwedge^2\phi_{\sigma}^{\vee} ,\overline{\psi}).
\]
\end{enumerate}
\end{Thm}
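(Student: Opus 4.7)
The plan is to show that any two maps $\mathcal{L}, \mathcal{L}': \Pi(\Sp_4) \to \Phi(\Sp_4)$ satisfying (a)--(c) must agree on every $\varpi$. Write $\varphi_\varpi := \mathcal{L}(\varpi)$ and $\varphi'_\varpi := \mathcal{L}'(\varpi)$, and partition $\Pi(\Sp_4)$ into three classes according to which of (a)--(c) directly applies.

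If $\varpi$ is not discrete series, then by (a) both $\varphi_\varpi$ and $\varphi'_\varpi$ factor through (the dual of) a proper Levi of $\Sp_4$. The Langlands classification exhibits $\varpi$ as the unique Langlands quotient of a standard module $I_P^{\Sp_4}(\tau\otimes\nu_P)$, with $\tau$ a tempered representation of a proper Levi $M$. Since $M$ is a product of $\GL_n$'s with $n\leq 2$ and at most one $\SL_2$-factor (in the Klingen case), the multiplicativity of L- and $\epsilon$-factors under parabolic induction -- which is built into the very definition of these factors for non-generic non-supercuspidals via the Langlands classification -- reduces (b) for $\varpi$ to the analogous assertion for $\tau$. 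Henniart's characterization of the LLC for $\GL_n$ via local factors of pairs pins down the $\GL$-components of $\phi_\tau$, while the $\SL_2$-factor is handled by the known uniqueness of LLC for $\SL_2$ (Labesse--Langlands, equivalently the $n = 1$ analog of the present theorem).

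For generic discrete series $\varpi$, property (b) supplies the twisted L- and $\epsilon$-factors with every $\sigma\in\Pi(\GL_r)$, $r\leq 3$, hence all twisted gamma factors via $\gamma=\epsilon\cdot L(1-s,\vee)/L(s,-)$. The local converse theorem of Jiang--Soudry for $\SO_{2n+1}$ in the case $n=2$ asserts that tempered generic $L$-parameters into $\SO_5(\C)$ are uniquely determined by twisted gamma factors with supercuspidals of $\GL_r$ for $r\leq 2n-1=3$, precisely the available range; thus $\varphi_\varpi=\varphi'_\varpi$. A non-generic non-supercuspidal discrete series $\varpi$ is a subquotient of some $I_P^{\Sp_4}(\tau)$ with $\tau$ a generic tempered representation of a proper Levi; the generic case applied to that $\tau$, together with multiplicativity and the finite list of possible subquotients of such $I_P^{\Sp_4}(\tau)$, identifies $\varphi_\varpi$ uniquely.

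For non-generic supercuspidal $\varpi$, property (c) supplies the Plancherel measure $\mu(s,\varpi\boxtimes\sigma,\psi)$ for all supercuspidal $\sigma\in\Pi(\GL_r)$, $r\leq 4$. Because $\varphi_\varpi$ is orthogonal, so that $\varphi_\varpi^\vee\cong\varphi_\varpi$, and because the $\bigwedge^2\phi_\sigma$-factors depend only on $\sigma$, the standard functional equation $\gamma(s,\rho,\psi)\gamma(1-s,\rho^\vee,\overline\psi)=1$ lets one extract the pair gamma factor $\gamma(s,\varphi_\varpi\otimes\phi_\sigma,\psi)$ from the symmetric product appearing in the Plancherel formula, up to an ambiguity killed by the rational-function form of local gamma factors in $q^{-s}$. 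Henniart's local converse theorem for $5$-dimensional Weil--Deligne representations (applied via the standard embedding $\SO_5(\C)\hookrightarrow\GL_5(\C)$, which requires twists up to $r\leq 4$) then uniquely determines $\varphi_\varpi$. The main obstacle is precisely this final step: inverting the symmetric product in the Plancherel formula to isolate a single pair gamma factor, and applying the $\GL_5$ converse theorem in a way that the orthogonal structure on $\varphi_\varpi$ eliminates any residual ambiguity coming from the functional equation.
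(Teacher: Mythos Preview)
Your treatment of the generic and non-supercuspidal cases is broadly in line with the paper's (which simply defers to the analogous argument in \cite[Thm.~10.1]{GT}), though your invocation of Jiang--Soudry and the handling of non-generic non-supercuspidal discrete series via ``the finite list of possible subquotients'' would need more care, since multiplicativity is only formulated for standard modules, not arbitrary subquotients.

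The genuine gap is in the non-generic supercuspidal case, and you have correctly identified it as the main obstacle but not overcome it. The Plancherel measure gives you only the \emph{symmetric} product
\[
\gamma(s,\varphi_\varpi\otimes\phi_\sigma,\psi)\cdot\gamma(-s,\varphi_\varpi\otimes\phi_\sigma^\vee,\overline\psi),
\]
and even using self-duality of $\varphi_\varpi$ and the functional equation, this product is invariant under $s\leftrightarrow -s$ in a way that genuinely loses information: as rational functions in $q^{-s}$ there are distinct $\gamma$-factors with the same symmetric product, so you cannot ``extract'' the individual pair gamma factor to feed into a $\GL_5$ converse theorem. The claim that the rational-function form kills the ambiguity is not justified and is, in general, false.

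The paper's approach is quite different and does not attempt any such extraction. It exploits the already-constructed map via $\GSp_4$: for non-generic supercuspidal $\varpi$, one knows from \cite{GT} that $\varpi$ sits under a $\pi\in\Pi(\GSp_4)$ whose parameter $\phi=\phi_1\oplus\phi_2$ has a very restricted shape, so that $\Phi:=std(\phi)=\mathbf{1}\oplus(\phi_1^\vee\otimes\phi_2)$ is explicitly known. Equating Plancherel measures then gives an identity of \emph{products} of gamma factors for $\Phi$ and for the candidate $\varphi$. Rather than trying to isolate a single gamma factor, the paper plugs in $\phi_\sigma$ equal to each irreducible constituent of $\Phi$ in turn and reads off the location of zeros and poles (at $s=0$, $s=\tfrac12$, $s=2$, depending on the case). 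A zero or pole at such a point forces some $L$-factor $L(s,\varphi_i^\vee\otimes\phi_\sigma)$ to have a pole, which pins down an irreducible summand $\varphi_i\subset\varphi$ up to an unramified twist; property~(a) (discrete-series parameters are multiplicity-free sums of \emph{orthogonal} irreducibles, so $\det\varphi_i=\pm1$) then kills the twist. Iterating over the constituents of $\Phi$ recovers $\varphi=\Phi$. The key idea you are missing is that one should compare against the known target $\Phi$ constituent-by-constituent using analytic behavior at specific points, rather than attempt to recover the full gamma factor.
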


As one can see from the theorem, we have to resort to the Plancherel measure for the non-generic supercuspidal representations. This is due to the lack of a theory of the local factors for these representations. But as in the theorem, the Plancherel measure turns out to be sufficient to characterize the correspondence.
\vskip 10pt

To prove our theorem, we consider the two separate cases.\\

\vskip 5pt

\noindent\textbf{\underline{Case 1: $\varpi$ is generic or non-supercuspidal:}}\\

First we consider the case where $\varphi$ is generic or non-supercuspidal. The proof for this case is almost identical to the analogous case given in \cite[Thm. 10.1]{GT}; so we omit the details here.
\vskip 5pt

In fact, let us mention that recently it has been shown by the second author that by combining the results of Henniart \cite[Cor. 1.4 and Thm 1.7]{He3}, in which he characterized the local Langlands correspondence of $\GL_n$ by twists up to $\GL_{n-1}$, and  Chen's $n\times(n-2)$ local converse theorem for supercuspidal representations of $\GL_n$ \cite{Chen}, one can characterize the local Langlands conjecture of $\GL_n$ by twists only up to $\GL_{n-2}$. The proof of this result will appear elsewhere.

\vskip 10pt

\noindent\textbf{\underline{Case 2: $\varpi$ is non-generic supercuspidal:}}\\

Let $\pi$ be a non-generic supercuspidal representation of $\GSp_4(F)$ such that $\pi|_{\Sp_4(F)}$ contains $\varpi$ as a constituent. By \cite{GT}, we know that the L-parameter $\phi:=L(\pi)$ of $\pi$ is of the form $\phi_1\oplus\phi_2$ where each $\phi_i$ is a $2$-dimensional irreducible representation of the Weil-Deligne group $WD_F$ with $\det\phi_1=\det\phi_2$.
Now set
\[  \Phi := std (\phi)  = {\bf 1} \oplus( \phi_1^{\vee} \otimes\phi_2).
\]
Then we have shown in the previous section that the Plancherel measure $\mu(s,\varphi\boxtimes\sigma, \psi)$ is equal to
\[
   \gamma(s,\Phi^\vee\otimes\phi_\sigma,\psi)\cdot\gamma(-s,\Phi\otimes\phi_\sigma^\vee,
    \overline{\psi})
    \cdot\gamma(2s,\bigwedge^2\phi_{\sigma},\psi)\cdot\gamma(-2s,\bigwedge^2\phi_{\sigma}^{\vee},\overline{\psi}).
\]
Hence, if $\mathcal{L}$ is a map verifying the requirement (c), with $\varphi = \mathcal{L}(\varpi)$, then we have
\begin{equation} \label{E:Plancherel}
    \gamma(s,\Phi^\vee\otimes\phi_\sigma,\psi)\cdot\gamma(-s,\Phi\otimes\phi_\sigma^\vee,
    \overline{\psi})
   = \gamma(s,{\varphi}^\vee\otimes\phi_\sigma,\psi)\cdot\gamma(-s,\varphi\otimes\phi_\sigma^\vee,\overline{\psi}).
\end{equation}
We will show that, together with the requirement (a), this forces $\varphi =\Phi$, which completes the proof of the theorem.\\
\vskip 5pt

\noindent\textbf{Case I}:\\
First assume that $\phi$ and hence $\Phi$ is a representation of the Weil group $W_F$ (with the
$\SL_2(\C)$ in $WD_F$ acting trivially). Since $\phi$ is a discrete series parameter, $\Phi$ is a multiplicity free direct sum of irreducible representations, each of which is an orthogonal representation. Let us write $\Phi=\oplus_i\Phi_i$, where the $\Phi_i$'s are distinct. Now let us take $\phi_\sigma=\Phi_i$ for any fixed $i$. Then it is easy to see that the LHS of (\ref{E:Plancherel}) has a zero at $s=0$. Hence for some irreducible constituent $\varphi_i=\rho_i \boxtimes S_{r_i}$ of $\varphi$, the function
\[
    L(s,\varphi_i^\vee\otimes\Phi_i)\cdot L(s,\varphi_i\otimes\Phi_i^\vee)
\]
has a pole at $s=0$. This happens if and only if
\[
    \rho_i=\Phi_i\otimes|-|^{\pm(r_i-1)/2}.
\]
But  the requirement (a) implies that each irreducible constituent of $\varphi$ is an orthogonal representation, and so $\det\rho_i=\pm 1$. This implies that $r_i =1$ and so  $\varphi = \Phi$.
\vskip 10pt

\noindent\textbf{Case II}:\\
Next assume that $\phi_1=\chi\boxtimes S_2$ and $\phi_2$ is  a representation of the Weil group $W_F$ such that $\det\phi_2=\chi^2$. Then
\[
    \Phi={\bf 1} \oplus (\phi_1^{\vee}\otimes\phi_2)={\bf 1} \oplus (\chi^{-1}\cdot\phi_2\boxtimes S_2).
\]
So the LHS of (\ref{E:Plancherel}) with $\phi_\sigma=\chi^{-1}\cdot\phi_2$ becomes
\begin{align}\label{E:LHS}
    &\gamma(s,(\chi^{-1}\cdot\phi_2\boxtimes S_2)^\vee\otimes(\chi^{-1}\cdot\phi_2),\psi)\cdot
    \gamma(-s,(\chi^{-1}\cdot\phi_2\boxtimes S_2)\otimes  (\chi^{-1}\cdot\phi_2)^\vee,\psi)\notag\\
    &\qquad\qquad\cdot\gamma(s,(\chi^{-1}\cdot\phi_2)^\vee,\psi)\cdot\gamma(-s,\chi^{-1}\cdot\phi_2,\psi)\notag\\
    &=\text{($\epsilon$ factors)}\cdot
    \frac{\zeta(\frac{1}{2}+1-s)\cdot \zeta(\frac{1}{2}+1+s)}
    {\zeta(\frac{1}{2}+s)\zeta(\frac{1}{2}-s)},
\end{align}
which has a zero at $s=\frac{1}{2}$. Hence the RHS of (\ref{E:Plancherel}) with $\phi_\sigma=\chi^{-1}\cdot\phi_2$ must also have a zero at $s=\frac{1}{2}$, \ie $\varphi$ has a constituent
$\rho \boxtimes S_r$ such that
\begin{align*}
&\gamma(s, (\rho \boxtimes S_r)^\vee\otimes(\chi^{-1}\cdot\phi_2),\psi)\cdot
\gamma(-s, (\rho \boxtimes S_r)\otimes(\chi^{-1}\cdot\phi_2)^\vee,\psi)\\
&=\text{($\epsilon$ factors)}\cdot
\frac{L(\frac{r-1}{2}+1-s,\rho \otimes(\chi^{-1}\cdot\phi_2)^\vee)L(\frac{r-1}{2}+1+s,\rho^\vee\otimes(\chi^{-1}\cdot\phi_2))}
{L(\frac{r-1}{2}+s,\rho^\vee\otimes(\chi^{-1}\cdot\phi_2))L(\frac{r-1}{2}-s,\rho \otimes(\chi^{-1}\cdot\phi_2)^\vee)}
\end{align*}
has a zero at $s=\frac{1}{2}$, \ie the denominator of this fraction must have a pole at $s=\frac{1}{2}$. This happens if and only if
\[
    \rho =\chi^{-1}|-|^{r/2}\cdot\phi_2\quad\text{ or }\quad \chi^{-1}|-|^{-(r-2)/2}\cdot\phi_2.
\]
But once again the requirement (a) implies that $\det\rho= \pm 1$, which implies $r=0$ or $r=2$.  Since
 $r > 0$, we conclude that $r=2$ and $\rho =\chi^{-1}\cdot\phi_2$, which gives $\varphi = \mu \oplus(\chi^{-1}\cdot\phi_2\boxtimes S_2)$ for some 1-dimensional $\mu$. The fact that $\varphi$ takes value in $\SO_5(\C)$ implies that $\mu$ is trivial, so that $\varphi = \Phi$, as desired. \\

\vskip 5pt

\noindent\textbf{Case III}:\\
Finally, assume that $\phi_1=\chi\boxtimes S_2$ and $\phi_2=\mu\boxtimes S_2$ with $\chi^2=\mu^2$ and $\chi\neq\mu$ so that
\[
    \Phi={\bf 1} \oplus (\phi_1^{\vee} \otimes\phi_2)={\bf 1} \oplus \chi^{-1}\mu\oplus(\chi^{-1}\mu\boxtimes S_3).
\]
Then by setting $\phi_\sigma={\bf 1}$ and arguing as in Case II, one sees that the RHS has a zero at $s=0$, which implies that $\varphi$ contains
\[
    |-|^{\pm(t-1)/2}\boxtimes S_t
\]
as a constituent. But once again the requirement (a) implies that the determinant of this constituent is $\pm 1$, \ie $t=1$ and so $\varphi$ contains ${\bf 1}$. Similarly by taking $\phi_\sigma=\chi^{-1}\mu$ in (\ref{E:Plancherel}), the LHS is, up to $\epsilon$ factors, equal to
\begin{align}\label{E:RHS2}
    &\frac{L(1-s,\chi\mu^{-1})\cdot L(1+s,\chi\mu^{-1})\cdot\zeta(1-s)\cdot\zeta(1+s)\cdot\zeta(1-s+1)\cdot\zeta(1+s+1)}
    {L(s,\chi^{-1}\mu)\cdot L(-s,\chi^{-1}\mu)\cdot\zeta(s)\cdot\zeta(-s)\cdot\zeta(s+1)\cdot\zeta(-s+1)}\notag\\
    &=\frac{L(1-s,\chi\mu^{-1})\cdot L(1+s,\chi\mu^{-1})\cdot\zeta(2-s)\cdot\zeta(2+s)}
    {L(s,\chi^{-1}\mu)\cdot L(-s,\chi^{-1}\mu)\cdot\zeta(s)\cdot\zeta(-s)}.
\end{align}
Note that $\chi^{-1}\mu$ is a nontrivial quadratic character, so that $L(s, \chi^{-1}\mu)$ has no poles
on $\R$. So the above fraction has a zero at $s=0$ and a pole at $s=2$. The zero at $s=0$ implies that $\varphi$ contains
\[
    \chi^{-1}\mu|-|^{\pm(r-1)/2}\boxtimes S_r,
\]
and again the requirement (a) implies $r=1$, so that $\varphi$ contains $\chi^{-1}\mu$. Similarly, the pole at $s=2$ implies that it contains
\[
    \chi^{-1}\mu|-|^{-(q-3)/2}\boxtimes S_q\quad\text{ or }\quad\chi^{-1}\mu|-|^{(q-5)/2}\boxtimes S_q,
\]
and the requirement (a) gives $q=3$ for the former and $q=5$ for the latter. But for dimension reasons, the latter cannot occur here. So $\varphi$ contains $\chi^{-1}\mu\boxtimes S_3$. All these considerations imply that $\varphi=\Phi$.
\vskip 10pt

Theorem \ref{T:unique} is proved.

\vskip 15pt


\section{\bf Parameters of $\GSp_4$}  \label{S:GSp4}


The rest of the paper is devoted to the determination of the sizes of the L-packets of $\Sp_4$ in terms of Galois theoretic properties of their L-parameters. Before coming to that, it will be useful to
have a better understanding of the L-parameters of $\GSp_4$:
 \[  \phi: WD_F \longrightarrow \GSp_4(\C). \]
 We shall call such a $\phi$ a \textbf{symplectic parameter}.  In particular, we shall give
 a coarse classification of the most nondegenerate L-parameters of $\GSp_4$, namely those
\[  \phi: W_F \longrightarrow \GSp_4(\C) \]
which are irreducible as 4-dimensional representations of the Weil group $W_F$.
\vskip 5pt

For this purpose, let us recall that an irreducible representation $\phi$ of $W_F$ is called {\bf primitive} if it is not of the form $\Ind_{W_E}^{W_F} \sigma$ for a finite extension $E/F$, whereas $\phi$ is {\bf dihedral} with respect to a quadratic extension $E/F$ if $\phi=\Ind_{W_E}^{W_F} \sigma$ or equivalently if $\phi \otimes \omega_{E/F} \cong \phi$. It has been shown by Koch \cite{Ko} that a primitive representation exists only when $p$ divides $\dim\phi$.\\

\vskip 5pt

The main result of this section is the trichotomy of the following proposition.

\vskip 5pt

\begin{Prop}  \label{P:parameter}
Let $\phi: W_F \longrightarrow \GSp_4(\C)$ be an irreducible 4-dimensional representation with similitude character $\simi(\phi)$. Then we have the following possibilities:

\vskip 5pt

\begin{enumerate}[\upshape (I)]
\item $\phi$ is primitive. In this case, the 5-dimensional representation $std(\phi)$ is irreducible.

\item There is a quadratic extension $E/F$ (with $\Gal(E/F) = \langle \tau \rangle$), a primitive representation $\sigma$ of $W_E$ and a character $\chi$ of $W_E$ such that
\[
	\phi = \Ind_{W_E}^{W_F} \sigma,
	\quad \sigma^{\tau} \cong \sigma \cdot  \chi, \quad \chi^2 \ne 1 \quad \text{and}
	\quad \simi(\phi)|_{W_E} = \chi \cdot \det \sigma \ne \det \sigma.
\]
Moreover, the 5-dimensional representation $std(\phi)$ is reducible but does not have a 1-dimensional constituent; it decomposes as the sum of a 2-dimensional irreducible constituent and a 3-dimensional irreducible constituent.

\item There is a quadratic extension $E/F$ and an irreducible two-dimensional representation $\sigma$ of $W_E$ such that
\[
	\phi = \Ind_{W_E}^{W_F} \sigma \quad \text{and} \quad \simi(\phi)|_{W_E} = \det \sigma.
\]
In this case, the 5-dimensional standard representation $std(\phi)$ contains at least  one 1-dimensional constituent. Indeed, it contains $\omega_{E/F}$.
\end{enumerate}

The three situations above are mutually exclusive. When $p\ne 2$, only {\upshape (III)} can occur, but when $p =2$, all the three situations can occur.
\end{Prop}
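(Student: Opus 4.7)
The strategy is to combine the dichotomy ``primitive vs.\ induced'' for irreducible Weil-group representations (due to Koch) with a Mackey-type analysis of the symplectic condition, and finally to compute $std(\phi)$ using the decomposition of $\wedge^2$ of an induced representation.

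First, suppose $\phi$ is primitive. Irreducibility of $\phi$ and Schur's lemma give $Z_{\GL_4}(\mathrm{Im}\,\phi) = \C^\times$, hence $A_\phi = 1$. Primitivity forces $I(\phi) = 1$, since any nontrivial quadratic self-twist character $\chi$ of $\phi$ would exhibit $\phi$ as an induction from the fixed field of $\ker\chi$. Proposition \ref{P:exact} then yields $A_\varphi = 1$; equivalently, the image of $\varphi$ in $\SO_5(\C)$ has trivial centralizer, which forces $\varphi$ to be irreducible as a 5-dimensional representation (any proper orthogonal decomposition would produce a nontrivial sign element in the centralizer). Koch's theorem (primitive $\Rightarrow p\mid\dim$) forces $p = 2$, so Case (I) can only occur in residue characteristic $2$.

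Next, if $\phi$ is imprimitive, it is induced either from a character of a quartic subfield or from a 2-dimensional representation of a quadratic subfield. The former reduces to the latter by transitivity of induction through an intermediate quadratic subfield, which exists in the cases at hand. So write $\phi = \Ind_{W_E}^{W_F}\sigma$ with $[E:F] = 2$, $\Gal(E/F) = \langle\tau\rangle$, and $\sigma$ irreducible 2-dimensional; irreducibility of $\phi$ forces $\sigma^{\tau}\not\cong\sigma$. The symplectic condition $\phi\cong\phi^\vee\otimes\simi(\phi)$, together with $\mathrm{Res}_{W_E}\phi = \sigma\oplus\sigma^{\tau}$, gives via Frobenius reciprocity one of
\[
\sigma\cong\sigma^{\vee}\otimes\simi(\phi)|_{W_E}, \qquad \sigma\cong(\sigma^{\tau})^{\vee}\otimes\simi(\phi)|_{W_E}.
\]
Using $\sigma^\vee\cong\sigma\otimes(\det\sigma)^{-1}$, the first alternative is equivalent to $\simi(\phi)|_{W_E}=\det\sigma$, which is Case (III). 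The second gives $\sigma^\tau\cong\sigma\otimes\chi$ with $\chi := \simi(\phi)|_{W_E}\cdot(\det\sigma)^{-1}$; then $\sigma^\tau\not\cong\sigma$ forces $\chi\ne 1$. Requiring $\chi^2\ne 1$ in Case (II) separates it from a degenerate subcase ($\chi$ a nontrivial quadratic character of $W_E$ descending from $W_F$), which can be rewritten as an induction from a different quadratic extension of $F$ satisfying the Case (III) condition. In Case (II), $\sigma$ is a primitive 2-dimensional representation of $W_E$, so Koch again forces $p = 2$.

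To pin down the decomposition of $std(\phi)$, use the identity $\wedge^2\phi = \simi(\phi)\oplus(std(\phi)\otimes\simi(\phi))$ (the first summand is the line determined by the symplectic form), together with the Mackey-type formula
\[
\wedge^2\Ind_{W_E}^{W_F}\sigma \;\cong\; \Ind_{W_E}^{W_F}(\wedge^2\sigma) \;\oplus\; M,
\]
where $M$ is a 4-dimensional representation of $W_F$ with $M|_{W_E} = \sigma\otimes\sigma^{\tau}$. In Case (III), $\wedge^2\sigma = \det\sigma = \simi(\phi)|_{W_E}$, and the projection formula gives $\Ind_{W_E}^{W_F}(\simi(\phi)|_{W_E}) = \simi(\phi)\oplus\simi(\phi)\cdot\omega_{E/F}$; the symplectic line is induced from the canonical symplectic form on $\sigma$ and thus sits inside this summand, producing $\omega_{E/F}$ as a constituent of $std(\phi)$. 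In Case (II), $\chi^2\ne 1$ implies that $\Ind_{W_E}^{W_F}(\wedge^2\sigma)$ is an irreducible 2-dimensional representation, so the symplectic line must instead lie in $M$; the residual 3-dimensional summand of $M$ is irreducible (using primitivity of $\sigma$ to ensure that its adjoint representation is irreducible), yielding the claimed $2+3$ decomposition of $std(\phi)$ with no 1-dimensional constituent. Mutual exclusivity of (I)--(III) then follows from the distinct shapes of $std(\phi)$.

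The principal technical obstacle is the bookkeeping in the final paragraph: identifying precisely where the symplectic line sits in the decomposition of $\wedge^2\phi$, and establishing irreducibility of the residual 3-dimensional summand of $M$ in Case (II). A secondary subtlety is the reduction in the imprimitive case when the inducing field has degree 4 over $F$, and the treatment of the borderline $\chi^2 = 1$ sub-case of the symplectic analysis.
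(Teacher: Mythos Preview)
Your overall architecture matches the paper's, but there are two genuine gaps that the paper has to work hard to fill and that your sketch does not address.

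First, the quartic reduction. You write that induction from a quartic field reduces to induction from a quadratic one ``by transitivity of induction through an intermediate quadratic subfield, which exists in the cases at hand.'' This is false when $p=2$: a quartic extension $K/F$ can be non-Galois with no intermediate quadratic subfield (Galois closure $S_4$). The paper's Lemma~\ref{L:quartic} treats exactly this situation with a substantive argument: one analyzes $\phi|_{W_K}$, shows the compositum of $K$ and a conjugate is a non-Galois cubic over $K$ with Galois closure $M/F$ of group $S_4$, and then uses the sign character of $S_4$ to produce a quadratic $E/F$ with $\phi\otimes\omega_{E/F}\cong\phi$. You flag this as a ``secondary subtlety'' but your proposed resolution is simply wrong in residue characteristic $2$.

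Second, and more seriously, you assert without argument that in Case~(II) the inducing representation $\sigma$ is primitive. Your dichotomy from the symplectic condition gives either $\simi(\phi)|_{W_E}=\det\sigma$ (Case~III) or $\sigma^\tau\cong\sigma\otimes\chi$ with $\chi\ne 1$; you then dispose of $\chi^2=1$ and declare the rest to be Case~(II). But nothing prevents $\sigma$ from being dihedral with $\chi^2\ne 1$, and in that situation one must show the data can be rewritten as Case~(III) for a \emph{different} quadratic extension $E'/F$. The paper does this in its Cases~2-a and~2-b: when $\sigma$ is dihedral, the $\tau$-invariance of $\simi(\phi)^{-1}M(\sigma)|_{W_E}=\mathbf{1}\oplus Ad(\sigma)$ forces a $\tau$-fixed quadratic character $\omega_0$ in $Ad(\sigma)$; one then shows $\omega_0$ cannot extend to a quartic character of $W_F$ (this is a genuine calculation, Case~2-b), so it extends to a quadratic $\omega_{E'/F}$ with $\phi\otimes\omega_{E'/F}\cong\phi$, and one checks $\simi(\phi)|_{W_{E'}}=\det\sigma'$ by a multiplicity argument. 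Your handling of the borderline $\chi^2=1$ case is similarly incomplete: you assume $\chi$ descends to a quadratic character of $W_F$, but a priori it could extend only to a quartic character, and ruling this out is part of the work.

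A smaller point: your argument for (I) via Proposition~\ref{P:exact} is an attractive alternative to the paper's direct approach, but the step ``$A_\varphi=1$ forces $\varphi$ irreducible'' is not literally correct. For instance, $\varphi=W\oplus W^\vee\oplus\mu$ with $W$ two-dimensional and $W\not\cong W^\vee$ has $Z_{\SO_5}(\varphi)\cong\C^\times$ connected, yet $\varphi$ is reducible; the sign element you invoke lies in the identity component. What saves you is that $I(\phi)=1$ also forces $std(\phi)$ to have no one-dimensional constituents (any such constituent $\mu$ would give $\phi\cong\phi\otimes\mu$), and \emph{that} together with $A_\varphi=1$ does imply irreducibility in dimension $5$. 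You should make this explicit. Finally, you do not construct examples showing Case~(II) actually occurs when $p=2$; the paper does this by taking $\phi=\rho\otimes\Ind_{W_E}^{W_F}\chi$ with $\rho$ a primitive two-dimensional representation of $W_F$.
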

\vskip 10pt

The rest of the section is devoted to the proof of the proposition.
We first study the primitive $\phi$'s. The following lemma proves the characterization of primitive $\phi$'s given in (I) of the proposition.
\vskip 10pt

\begin{Lem}  \label{L:primitive}
Let $\phi:W_F \longrightarrow \GSp_4(\C)$ be an irreducible representation.   Then
$\phi$ is primitive if and only if $std (\phi)$ is irreducible as a 5-dimensional representation.
\end{Lem}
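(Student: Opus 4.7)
The plan is to pivot on the $\GSp_4$-equivariant decomposition $\bigwedge^2 \C^4 = \C\cdot\omega_0 \oplus V_5'$, where $\omega_0$ is the symplectic form (on which $\GSp_4$ acts by $\simi$) and $V_5'$ is its orthogonal complement with respect to the Klein form $\eta\mapsto\eta\wedge\eta$; one checks that $V_5'\cong std\otimes\simi$ as $\GSp_4$-representations, since the center $\Gm\subset\GSp_4$ acts on $V_5'$ by $\lambda^2=\simi(\lambda I)$. Hence for a parameter $\phi$ with $\mu:=\simi(\phi)$,
\[
   \bigwedge^2\phi\;\cong\;\mu\;\oplus\;std(\phi)\cdot\mu
\]
as $W_F$-representations, so the irreducibility of $std(\phi)$ is controlled by the isotypic decomposition of $\bigwedge^2\phi$.

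For the direction ``not primitive $\Rightarrow$ $std(\phi)$ reducible'', suppose $\phi\cong\Ind_{W_E}^{W_F}\sigma$. Then $\phi\otimes\omega_{E/F}\cong\phi$, hence $std(\phi)\otimes\omega_{E/F}\cong std(\phi)$. If $std(\phi)$ were irreducible, the standard Clifford--Mackey criterion would force it to be induced from $W_E$, hence of even dimension; but $\dim std(\phi)=5$, a contradiction.

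For the converse, assume $std(\phi)$ is reducible while $\phi$ is irreducible. The key input I would establish is a multiplicity-one lemma: every character of $W_F$ appears in $\bigwedge^2\phi$ with multiplicity at most one. Indeed, a $2$-dimensional semi-invariant subspace $W\subset\bigwedge^2\C^4$ meets the Klein quadric $\{\eta:\eta\wedge\eta=0\}$ non-trivially (any non-zero homogeneous quadratic form on $\C^2$ factors over $\C$ and so has non-trivial zeros), yielding a decomposable $\phi$-semi-invariant bivector, and hence a $\phi$-invariant $2$-plane in $\C^4$, contradicting irreducibility of $\phi$. In particular, the trivial character cannot occur in $std(\phi)$, else $\mu$ would have multiplicity at least $2$ in $\bigwedge^2\phi$.

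Now case-split on the decomposition of $std(\phi)$. If $std(\phi)$ contains a $1$-dimensional constituent $\omega$, then $\omega\ne\mathbf{1}$ by the previous paragraph, and the corresponding line in $\bigwedge^2\C^4$ is spanned by a non-degenerate bivector $\omega_1$; the endomorphism $T:=\omega_0^{-1}\omega_1$ of $\C^4$ intertwines $\phi$ with $\phi\otimes\omega$, so $\phi\otimes\omega\cong\phi$. The similitude relation $\simi(\phi\otimes\omega)=\omega^2\cdot\simi(\phi)$ forces $\omega^2=1$, so $\phi$ is dihedral with respect to $\ker\omega$. Otherwise the only remaining possibility is $std(\phi)=\rho_2\oplus\rho_3$ with $\rho_2$ a $2$-dimensional irreducible orthogonal representation; such a $\rho_2$ lands in $\OO_2=\SO_2\rtimes\Z/2$ and is therefore of the form $\Ind_{W_L}^{W_F}\chi$ for some quadratic $L/F$ and character $\chi$ of $W_L$ with $\chi^2\ne 1$. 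Restricting, $\chi$ appears as a $1$-dimensional constituent of $std(\phi|_{W_L})$; if $\phi|_{W_L}$ were irreducible, the previous case applied to $\phi|_{W_L}$ would force $\chi^2=1$, a contradiction. Hence $\phi|_{W_L}$ is reducible, and Clifford theory yields $\phi=\Ind_{W_L}^{W_F}\sigma$, exhibiting $\phi$ as dihedral. The main obstacle is the Klein-quadric multiplicity-one lemma, together with the bootstrap from the $1$-dim-constituent case to the $2{+}3$ case via restriction to $W_L$; the remaining steps are formal.
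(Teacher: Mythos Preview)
Your argument for the implication ``$std(\phi)$ reducible $\Rightarrow$ $\phi$ not primitive'' is correct and takes a genuinely different route from the paper. The paper argues group-theoretically: if $std(\phi)$ is reducible its image lies in $S(\OO_1\times\OO_4)$ or $S(\OO_2\times\OO_3)\subset\SO_5(\C)$, and the preimage in $\GSp_4(\C)$ of either subgroup is disconnected with two components, the identity component already acting reducibly on $\C^4$; hence $\phi$ is reducible on an index-two subgroup of $W_F$. Your Klein-quadric approach is more explicit --- the multiplicity-one lemma and the intertwiner $T=\omega_0^{-1}\omega_1$ directly manufacture the quadratic character $\omega$ with $\phi\otimes\omega\cong\phi$ in the $1{+}4$ case, and the bootstrap to the $2{+}3$ case by restriction to $W_L$ is clean. (One small point you leave implicit: the line $\C\omega_1$ is automatically off the Klein quadric, by the same argument as your multiplicity-one lemma, which is why $\omega_1$ is non-degenerate.)

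Your forward direction, however, has a genuine gap. You write ``suppose $\phi\cong\Ind_{W_E}^{W_F}\sigma$'' and immediately invoke $\omega_{E/F}$, which presumes $E/F$ is quadratic. But ``not primitive'' only says that $\phi$ is induced from \emph{some} proper open subgroup, and for a $4$-dimensional $\phi$ this allows $\phi=\Ind_{W_K}^{W_F}\chi$ with $K/F$ quartic. If $K$ contains an intermediate quadratic subfield you can factor the induction through it and your argument applies, but when $p=2$ there exist quartic $K/F$ with no quadratic subfield (Galois closure of type $S_4$), and then $W_K$ is not normal and there is no character of $W_F$ trivial on $W_K$ to twist by. The paper handles this case directly: $\phi|_{W_K}$ contains the inducing character $\chi$ and, by the symplectic pairing, also its partner $\chi'=\simi(\phi)|_{W_K}\cdot\chi^{-1}\neq\chi$; hence $\bigwedge^2\phi|_{W_K}$ has at least two one-dimensional constituents, so $std(\phi)|_{W_K}$ has at least one, and Frobenius reciprocity would then embed a putatively irreducible $5$-dimensional $std(\phi)$ into the $4$-dimensional $\Ind_{W_K}^{W_F}\psi$, a contradiction. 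You need to add this case (or alternatively prove that such $\phi$ are nevertheless dihedral with respect to some quadratic extension, which is the content of the paper's subsequent Lemma~\ref{L:quartic} and is not trivial).
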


\begin{proof}
Suppose that $\phi$ is primitive but $std (\phi)$ is reducible. Then the image of
$std (\phi)$ must be contained in the subgroup  $S(\OO_2(\C) \times \OO_3(\C))$ or $S(\OO_1(\C) \times \OO_4(\C))$ of $\SO_5(\C)$.
The preimage of each of these two groups in $\GSp_4(\C)$ is, respectively,  the normalizer of the Levi subgroup $\GL_2(\C) \times \GL_1(\C)$ of  a Siegel parabolic  or the normalizer of the subgroup $(\GSp_2(\C) \times \GSp_2(\C))^0$. In either case, the preimage is disconnected with two connected components, and its identity component acts reducibly.
Thus, $\phi$ becomes reducible when restricted to a subgroup of index two, which contradicts the assumption that $\phi$ is primitive. Thus, $std (\phi)$ must be irreducible if $\phi$ is primitive.
\vskip 10pt

Conversely, suppose that $std (\phi)$ is irreducible. We need to rule out the possibility that $\phi$ is induced.
Observe that as representations of $W_F$,
\[  ({\bigwedge}^2 \phi) \otimes \simi(\phi)^{-1} =  std (\phi)  \oplus \C. \]
Now we show that $\phi$ cannot be equal to $\Ind_{W_K}^{W_F} \chi$ with $K/F$ quartic. Indeed,
if $\phi$ has this form,  the restriction of $\phi$ to $W_K$ contains the 1-dimensional submodule $\chi$. It must then contain (at least) two 1-dimensional submodules, since it preserves a nondegenerate symplectic form up to scaling. From this, it follows that $\wedge^2 \phi|_{W_K}$ contains at least two 1-dimensional  submodules. This would imply that $std (\phi)|_{W_K}$ contains a 1-dimensional submodule and by Frobenius reciprocity, one would obtain a nonzero $W_F$-intertwining map from $std (\phi)$ to a 4-dimensional representation, which is a contradiction.
On the other hand, suppose that $\phi=\Ind_{W_E}^{W_F} \rho$ with $E/F$ quadratic. Then the restriction of $\phi$ to $W_E$ is the sum of two 2-dimensional submodules. Again, this implies that
$std(\phi)|_{W_E}$ contains a 1-dimensional submodule which is again impossible.
We have thus shown that $\phi$ is primitive.
\end{proof}

\vskip 10pt
Next, we  describe a construction of primitive $\phi$'s which was shown to us by D. Prasad.
\vskip 5pt

\begin{Prop}  \label{P:dipendra}
When the residue characteristic $p$ of $F$ is equal to $2$, there exists irreducible primitive representations $\phi: W_F \longrightarrow \GSp_4(\C)$.
\end{Prop}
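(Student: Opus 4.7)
The proof exhibits an explicit example, obtained as the inflation of a faithful irreducible primitive symplectic $4$-dimensional representation $\rho: G \to \GSp_4(\C)$ of a suitable finite group $G$ along a surjection $W_F \twoheadrightarrow G$. The main task is the construction of $G$ and $\rho$; the Galois realization and the deduction of primitivity of $\phi$ from primitivity of $\rho$ are comparatively routine.

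A natural starting point for $G$ is the extraspecial group $H \cong 2^{1+4}$, realized as the Heisenberg group of a nondegenerate symplectic $\F_2$-vector space $V$ of dimension $4$. Its unique faithful irreducible Schr\"odinger representation is $4$-dimensional and symplectic, but is monomial --- induced from a linear character of the Heisenberg subgroup of a Lagrangian of $V$ --- and hence \emph{not} primitive. To cure this defect, one enlarges $H$ by a finite subgroup $A \le \Sp(V) \cong \Sp_4(\F_2)$ of symplectic outer automorphisms, chosen so that $A$ stabilizes no Lagrangian of $V$. The Schr\"odinger representation of $H$ then extends, possibly after passing to a suitable extension of $A$, to a representation $\rho$ of $G := H \cdot A$, and a Clifford-theoretic analysis of the restrictions of $\rho$ to subgroups of index $2$ and $4$ in $G$ shows that $\rho$ is primitive.

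The realization of $G$ as a finite Galois quotient $\Gal(L/F)$ of $W_F$ uses that $F$ has residue characteristic $2$: the maximal pro-$2$ quotient of $\Gal(\bar F/F)$ is a Demushkin group of rank $[F:\Q_2]+2$ (as $-1 \in F$), so any $2$-group of sufficiently small rank occurs as a quotient, and the odd-order part of $G$ (if any) is handled by adjoining suitable tame extensions. Inflating $\rho$ along the surjection $W_F \twoheadrightarrow G$ yields the desired $\phi: W_F \to \GSp_4(\C)$. It is irreducible and symplectic by construction, and it is primitive because any identification $\phi \cong \Ind_{W_E}^{W_F} \sigma$ for a finite extension $E/F$ would translate, under the quotient $W_F \twoheadrightarrow G = \Gal(L/F)$, into an induction of $\rho$ from the subgroup $\Gal(L/E) \le G$, contradicting the primitivity of $\rho$.

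The main obstacle is the primitivity check for $\rho$ in the first step: ruling out induction of $\rho$ from every proper subgroup of $G$ is a nontrivial combinatorial verification, depending on the precise choice of $A$ and on the subgroup structure of $G$, and requiring a careful Clifford-theoretic analysis of subgroups of index $2$ and $4$.
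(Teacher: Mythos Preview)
Your approach is genuinely different from the paper's, and while the overall strategy is reasonable, it leaves substantially more unfinished than you acknowledge.

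The paper does not build a finite group with a primitive symplectic $4$-dimensional representation at all. Instead it exploits the characterization proved just before (Lemma~\ref{L:primitive}): $\phi$ is primitive if and only if the $5$-dimensional representation $std(\phi)$ is irreducible. So it suffices to produce an irreducible self-dual $5$-dimensional representation $\varphi^{\#}$ of $W_F$; being odd-dimensional and self-dual it is automatically orthogonal, and after a quadratic twist lands in $\SO_5(\C)$, whence any lift $\phi$ to $\GSp_4(\C)$ is primitive. The existence of $\varphi^{\#}$ is then reduced, via the local Langlands correspondence for $\GL_5$ and the Jacquet--Langlands correspondence, to the existence of a self-dual irreducible representation of dimension $>1$ of $D_5^{\times}$ (the multiplicative group of the degree-$5$ division algebra). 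This last point follows from the observation that, since $p=2$, the finite quotients $D_5^{\times}/D_5^{(i+1)}$ contain non-central involutions coming from $D_5^{(i)}/D_5^{(i+1)}$. The argument is short and uniform in $F$; it buys you freedom from any explicit group construction or embedding problem.

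Your route, by contrast, demands three nontrivial verifications, and the Galois realization is a genuine gap rather than a routine step. The extraspecial group $H=2^{1+4}$ has Frattini quotient $(\Z/2\Z)^4$, hence requires four generators, while the maximal pro-$2$ quotient of $G_{\Q_2}$ has rank only $[\Q_2:\Q_2]+2=3$. Thus $H$ alone, and likely $H\cdot A$ for $A$ a $2$-group, cannot be realized over $\Q_2$. Taking $A$ of odd order (say $A=C_5$ acting irreducibly on $H/Z(H)$) collapses the generator count but now forces you to solve a genuine embedding problem for the non-$2$-group $G=H\rtimes A$ over every $2$-adic field; ``adjoining suitable tame extensions'' is not a proof, since one must arrange the correct Frobenius action on the wild part. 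Your transfer of primitivity from $\rho$ to $\phi$ is also slightly imprecise: an identification $\phi\cong\Ind_{W_E}^{W_F}\sigma$ only gives $E\subset L$ after first observing that $E\cap L\neq F$ (else $W_E$ surjects onto $G$ and $\phi|_{W_E}$ would be irreducible) and then handling the case $[E:F]=4$, $[E\cap L:F]=2$ via Clifford theory for the index-$2$ normal subgroup $\Gal(L/E\cap L)$. Finally, you never actually specify $A$ or carry out the primitivity check for $\rho$, which you yourself flag as the main obstacle.
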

\vskip 5pt

\begin{proof}
Suppose that  $\varphi^{\#}$ is an irreducible self-dual 5-dimensional representation of $W_F$; we shall show below that such a representation exists.  Such a $\varphi^{\#}$ must necessarily preserve a quadratic form and thus $\varphi^{\#}$ factors through $\OO_5(\C)$. By twisting by a quadratic character if necessary, we can ensure that $\varphi^{\#}$ factors through $\SO_5(\C)$.
 As we saw earlier,  such a $\varphi^{\#}$ admits a lifting
\[  \phi : W_F \longrightarrow \GSp_4(\C), \]
so that $std(\phi) = \varphi^{\#}$.
By the previous lemma, we know that $\phi$ must be primitive.

\vskip 10pt

Thus, it remains to show that an irreducible self-dual $\varphi^{\#}$ exists, or equivalently (by the local Langlands correspondence for $\GL_5$) that there exists a self-dual supercuspidal representation of $\GL_5(F)$.
 By the Jacquet-Langlands correspondence, it is equivalent to showing that  $D_5^{\times}$ has a self-dual irreducible representation of dimension $> 1$ (where $D_5$ is a division algebra of degree 5).  The group $D_5^{\times}$ has a standard decreasing filtration
\[  D_5^{\times} \supset D_5^{(1)} \supset D_5^{(2)} \supset .... \]
by open compact subgroups so that, for $i \geq 1$, the successive quotients $D_5^{(i)}/ D_5^{(i+1)}$ are equal to the additive group of a finite field of characteristic 2 (the degree 5 extension of the residue field of $F$). Since any irreducible representation
of $D_5^{\times}$ factors through the finite group $D_5^{\times}/ D_5^{(i+1)}$ for some $i$, we are reduced to showing that $D_5^{\times}/ D_5^{(i+1)}$ has non-central elements of order 2. This is certainly the case, as one can readily see by examining the elements in $D_5^{(i)}/D_5^{(i+1)}$.
\end{proof}

\vskip 10pt

We may now focus on the non-primitive $\phi$'s.
We first note the following lemma.

\vskip 5pt

\begin{Lem}\label{L:quartic}
Suppose that an irreducible  symplectic parameter $\phi$ is of the form $\phi = \Ind_{W_K}^{W_F} \chi$ with $K/F$ a quartic non-Galois extension which does not contain a quadratic subfield. Then one can find a quadratic extension $E/F$ such that $\phi = \Ind_{W_E}^{W_F} \sigma$ for some $\sigma$. Note that this situation is possible only when $p=2$ because if $p\neq 2$, any quartic extension contains an intermediate field.
\end{Lem}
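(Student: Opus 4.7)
The plan is to analyze the Galois closure $L/F$ of $K/F$, with $G = \Gal(L/F)$ and $H = \Gal(L/K)$, and to exploit the symplectic structure on $\phi$ to pin down $G$. Since $K/F$ is quartic non-Galois, $H$ is a non-normal subgroup of index $4$ in a transitive subgroup $G \hookrightarrow S_4$. The transitive subgroups of $S_4$ are $S_4, A_4, D_4, V_4, C_4$; among these $V_4$ and $C_4$ make $K/F$ Galois, and in $D_4$ every non-central subgroup of order $2$ lies in a Klein four subgroup of index $2$, whose fixed field is a quadratic subfield of $K$. The hypotheses thus force $G \in \{S_4, A_4\}$, and in both cases $H$ is a maximal subgroup of $G$.

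The next step is to decompose $\phi|_{W_L} = \bigoplus_{g \in G/H}(\chi|_{W_L})^g$, a direct sum of four characters of $W_L$ permuted by $G$. By Clifford theory combined with the maximality of $H$, the $G$-stabilizer of $\chi|_{W_L}$ is either $H$ (the four characters are distinct) or $G$ (the restriction is isotypic of multiplicity $4$), and I would handle these two cases separately.

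For the first case, let $v_i$ be a $\mu_i$-eigenvector of $W_L$ for $\mu_i := (\chi|_{W_L})^{g_i}$ and set $\omega = \simi(\phi)$. Since $\phi$ is symplectic, the invariant form $B$ satisfies $\mu_i(w)\mu_j(w)B(v_i, v_j) = \omega(w) B(v_i, v_j)$ for $w \in W_L$, forcing $\mu_i\mu_j = \omega|_{W_L}$ whenever $B(v_i, v_j) \ne 0$. Distinctness of the $\mu_i$ implies that the nonzero positions of $B$ form a matching, and nondegeneracy of $B$ forces this matching to be perfect, giving a $2{+}2$ partition of $\{1,2,3,4\}$. The group $G$ preserves this partition (since it preserves $B$ up to scalar), so the image of $G$ in $S_4$ is contained in the dihedral group $D_4$ of order $8$. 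This contradicts $G \in \{S_4, A_4\}$, so this case cannot occur.

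The second case is the main obstacle. Here $W_L$ acts on $\phi$ by the scalar character $\mu = \chi|_{W_L}$, exhibiting $\phi$ as a $4$-dimensional projective representation of $G$, classified by an element of $H^2(G, \C^\times) \cong \Z/2$. A trivial class would make $\phi$ a twist of an irreducible $4$-dimensional linear representation of $G$; but neither $S_4$ nor $A_4$ admits such a representation. Hence the class is nontrivial and $\phi$ lifts to a genuine irreducible $4$-dimensional representation $V$ of the double cover. A dimension count against the irreducibles of $2.A_4$ (of dimensions $1, 1, 1, 2, 2, 2, 3$) eliminates $G = A_4$. So $G = S_4$, and $V$ is the unique genuine $4$-dimensional irreducible representation of the binary octahedral group $2.S_4$. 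Its restriction to the index-$2$ subgroup $2.A_4$ has central $\Z/2$ acting as $-1$, forcing it to decompose into $2$-dimensional genuine irreducibles of $2.A_4$; by $2.S_4$-irreducibility of $V$, the restriction must be $\sigma_1 \oplus \sigma_2$ with $\sigma_1 \not\cong \sigma_2$ swapped by the outer involution, so $V = \Ind_{2.A_4}^{2.S_4}\sigma_1$. Translating back, this gives $\phi = \Ind_{W_{E_0}}^{W_F}\sigma$, where $E_0 = L^{A_4}$ is the quadratic discriminant field of $K/F$, yielding the desired quadratic extension $E = E_0$.
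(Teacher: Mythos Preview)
Your proof is correct, and it follows a genuinely different route from the paper's argument.

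\textbf{Comparison.} The paper restricts $\phi$ only to $W_K$ and works with the Mackey decomposition there. Using the symplectic form it extracts a second character $\chi' = \simi(\phi)|_{W_K}\cdot\chi^{-1} \ne \chi$, then studies the cubic extension $K\cdot K^{\tau}/K$: if this were Galois one gets $\chi' = \chi$, a contradiction, and non-Galois forces $\Gal(M/F) \cong S_4$ for the Galois closure $M$. A determinant computation with the two-dimensional irreducible of $\Gal(M/K)\cong S_3$ then yields $\chi' = \chi\cdot\epsilon|_{W_K}$, whence $\phi\otimes\epsilon\cong\phi$ and $\phi$ is induced from the quadratic field cut out by the sign character $\epsilon$. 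By contrast, you restrict all the way to the Galois closure and argue via Clifford theory plus the projective representation theory of $G\in\{A_4,S_4\}$: the symplectic form produces a $2{+}2$ pairing ruling out the case of four distinct isotypic characters, and in the isotypic case the Schur multiplier argument eliminates $A_4$ (no four-dimensional genuine irreducible of $2.A_4$) and pins down the unique four-dimensional genuine irreducible of $2.S_4$, whose restriction to $2.A_4$ splits and hence exhibits $\phi$ as induced from the discriminant field. Both arguments land on the same quadratic extension $E = L^{A_4}$.

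\textbf{What each buys.} The paper's argument is more elementary: it avoids Schur multipliers and the representation theory of the binary polyhedral groups, and it never needs the Case~1/Case~2 split since the determinant identity works uniformly. Your argument is more structural: the $2{+}2$ pairing is a clean direct use of the symplectic form, and the elimination of $A_4$ via non-existence of a suitable projective representation makes transparent \emph{why} the symplectic hypothesis matters.

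\textbf{One terse step worth making explicit.} In your final ``translating back'' from $V|_{2.A_4} = \sigma_1\oplus\sigma_2$ to $\phi = \Ind_{W_{E_0}}^{W_F}\sigma$, the point is that a subspace of $\C^4$ is $\phi(W_{E_0})$-invariant if and only if it is invariant under the projective image, which coincides with the projective image of $V|_{2.A_4}$; since $\sigma_1\not\cong\sigma_2$ the only nontrivial invariant subspaces are the two $2$-dimensional isotypic pieces, so $\phi|_{W_{E_0}}$ breaks as a sum of two inequivalent irreducible $2$-dimensional pieces and irreducibility of $\phi$ forces the induction. This is implicit in what you wrote but deserves a sentence.
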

\begin{proof}
Consider the restriction of $\phi$ to $W_K$. By Frobenius reciprocity and the irreducibility of $\phi$,
$\phi|_{W_K}$  contains the character $\chi$ with multiplicity one. Since $\phi$ is a symplectic parameter, the line affording the character $\chi$ is an isotropic line, and thus $\phi|_{W_K}$ must also contain the character
\[  \chi' = \simi(\phi)|_{W_K} \cdot \chi^{-1} \ne \chi, \]
so that
\[  \phi|_{W_K} = \chi \oplus  \chi' \oplus V \]
with $\dim V = 2$ and $\det V = \chi \cdot \chi'$.
\vskip 10pt

On the other hand, the fact that $K/F$ has no quadratic subfield implies that the double coset space $W_K\backslash W_F/W_K$ has size $2$. Thus Mackey's lemma implies that
\[  \phi|_{W_K} = \chi \oplus \Ind_{W_K \cap \tau^{-1}W_K \tau}^{W_K}  \chi^{\tau}, \]
where $\tau$ is any element of $W_F \smallsetminus W_K$.  The latter summand must contain $\chi'$ and so by Frobenius reciprocity, we have
\[  \chi'|_{W_K \cap \tau^{-1}W_K\tau} =  \chi^{\tau}|_{W_K \cap \tau^{-1}W_K\tau}. \]
Hence,
\[   \phi|_{W_K} = \chi \oplus \left( \chi' \cdot \Ind_{W_K \cap \tau^{-1}W_K \tau}^{W_K} 1 \right). \]
\vskip 10pt

Now let $L$ be the compositum of $K$ and $K^{\tau}$, so that $L/K$ is a cubic extension since
$W_L = W_K \cap \tau^{-1}W_K \tau$. We have:
\vskip 5pt

\begin{enumerate}[$\bullet$]
\item  $L/K$ is non-Galois. If not, then $\phi|_{W_K} = \chi \oplus \chi' \cdot (1 \oplus \mu \oplus \mu^2)$ where $\mu$ is a cubic character of $\Gal(L/K) \cong \Z/3\Z$. This would imply that $\chi \cdot \chi'= \det V = {\chi'}^2$, so that $\chi' = \chi$. This is a contradiction.
\vskip 5pt

\item If $M$ is the Galois closure of $L/K$, so that $\Gal(M/K) \cong S_3$, then $M$ is the Galois closure of $K/F$ so that $\Gal(M/F) \cong S_4$. Indeed, on one hand, $M$ is a degree 24 extension of $F$. On the other hand, the Galois closure of $K/F$ has degree $\leq 24$ and must contain $L$ and hence $M$. This shows that $M$ is the Galois closure of $K/F$ and $\Gal(M/F) \cong S_4$.
\end{enumerate}

\vskip 5pt

Now the sign character $\epsilon$ of $\Gal(M/F) \cong S_4$ determines a quadratic extension $E/F$. Moreover, we have
\[  \phi|_{W_K} =  \chi \oplus \chi'  \oplus \chi' \cdot V_0 \]
where $V_0$ is the (unique) irreducible 2-dimensional representation of $\Gal(M/K) \cong S_3$. Note that $\det V_0$ is the sign character of $\Gal(M/K)$, which is $\epsilon|_{W_K}$. Since $V = \chi' \cdot V_0$ and $\chi \cdot \chi' = \det V$, we deduce that
\[  \chi' = \chi \cdot \epsilon|_{W_K}, \]
which implies that
\[  \phi \otimes \epsilon = \Ind_{W_K}^{W_F} (\chi \cdot \epsilon|_{W_K}) = \Ind_{W_K}^{W_F} \chi' = \phi. \]
This shows that
\[  \phi  \cong \Ind_{W_E}^{W_F} \sigma \]
for some $\sigma$. This finishes the proof of the lemma.
\end{proof}

\vskip 10pt

Now we are ready to prove Proposition \ref{P:parameter}.
\vskip 5pt

\begin{proof}[{\bf Proof of Proposition \ref{P:parameter}}]
When $p\neq 2$, this has been shown by Vigneras \cite{V}. We shall argue generally below. Let us recall that the similitude character $\simi(\phi)$ occurs in $\wedge^2 \phi$, and
\begin{equation} \label{E:stdphi}
   \simi(\phi)^{-1} \cdot \bigwedge^2 \phi = 1 \bigoplus std(\phi).
\end{equation}

\vskip 5pt

The case of primitive $\phi$'s have been handled by
 Lemma \ref{L:primitive}  and  Proposition \ref{P:dipendra}.  Thus, we may assume that $\phi$ is not primitive below. Hence by the above lemma, we may suppose that
\[
    \phi = \Ind_{W_E}^{W_F} \sigma,
\]
for some quadratic extension $E/F$ with $\Gal(E/F)  = \langle \tau \rangle$ and some irreducible 2-dimensional representation $\sigma$. Then note that
\begin{equation} \label{E:detphiF}
  \simi(\phi)^2 = \det\phi = \det \sigma|_{F^{\times}}
\end{equation}
as characters of $F^{\times}$.  Moreover, we have
\begin{equation} \label{E:wedge}
  \bigwedge^2 \phi = \Ind_{W_E}^{W_F} \det\sigma \bigoplus M(\sigma),
\end{equation}
where $M(\sigma)$ is the multiplicative induction of $\sigma$ to $W_F$, which is simply an extension of $\sigma \otimes \sigma^{\tau}$ from $W_E$ to $W_F$ and sometimes called the Asai lift of $\sigma$. (See \cite[\S 7]{Pr} for this notion.) Now note that in  $\wedge^2 \phi|_{W_E}$, any $1$-dimensional character occurs with multiplicity at most $1$, except
for the character $\det \sigma$ which may occur with multiplicity 2.
To see this, observe that
since $\sigma$ is irreducible, any 1-dimensional character $\chi$ occurs in $M(\sigma)|_{W_K} = \sigma \otimes \sigma^{\tau}$ with multiplicity at most 1, because
\[
\dim   \Hom_{W_E}(\sigma \otimes \sigma^{\tau},\chi)=\dim \Hom_{W_E}(\sigma^{\tau},\chi\otimes{\sigma}^\vee)\leq 1
\]
by Schur's lemma. Also neither $\det\sigma$ nor $\det \sigma^{\tau}$ occurs in $\sigma \otimes \sigma^{\tau}$; if either one of them does, then we would have
\[  \sigma^{\tau} \cong  \sigma^{\vee} \otimes \det \sigma \cong \sigma, \]
which is a contradiction to the assumption that $\sigma$ is not $\tau$-invariant.
Hence in $\wedge^2 \phi|_{W_E}$, any 1-dimensional character occurs with multiplicity  at most 1, except perhaps for the character $\det \sigma$, which may occur with multiplicity 2 in
$(\Ind_{W_E}^{W_F} \det\sigma)|_{W_E}$.

\vskip 10pt

Now if $\simi(\phi)$  occurs in the first summand on the RHS of  (\ref{E:wedge}), then
\[  \simi(\phi)|_{W_E} = \det\sigma, \]
and we are in situation (III) of the proposition. Moreover, (\ref{E:stdphi}) and (\ref{E:wedge}) imply that
$std(\phi)$ contains $\omega_{E/F}$.
\vskip 5pt

Henceforth, we suppose that $\simi(\phi)$ occurs in $M(\sigma)$.
Then we have
\begin{equation}\label{E:sigma^tau}
     \sigma^{\tau} \cong \sigma^{\vee} \otimes \simi(\phi)|_{W_E}=\sigma\otimes(\simi(\phi)|_{W_E}/\det\sigma).
\end{equation}
Thus we have
\begin{equation} \label{E:Msigma}
  \simi(\phi)^{-1} \cdot M(\sigma)|_{W_E} = 1 \bigoplus Ad(\sigma).
\end{equation}
Since the LHS is $\tau$-invariant, $\tau$ must permute the 1-dimensional constituents of $Ad(\sigma)$ if there are any. These 1-dimensional constituents are precisely those quadratic characters $\omega$ of $W_E$ with respect to which $\sigma$ is dihedral, \ie such that $\sigma \otimes \omega \cong \sigma$.
If $\sigma$ is dihedral, $Ad(\sigma)$ contains  1 or 3 quadratic characters. In either case, we see that at least one of these quadratic characters must be fixed by $\tau$.
If we denote this $\tau$-invariant quadratic character by $\omega_0$, then (\ref{E:Msigma}) shows that $\simi(\phi)^{-1} \cdot M(\sigma)$ contains an extension of $\omega_0$ to $W_F$, which may be a quadratic or quartic character.
\vskip 5pt

For each of those cases, we will show below that if $\sigma$ is primitive, either (II) or (III) holds, and if it is dihedral, then $\omega_0$ can actually extend only to a quadratic character and for this case (III) happens.
\vskip 5pt

\noindent{\bf Case 1-a:} $\sigma$ is primitive and $\chi := \simi(\phi)|_{W_E}/\det\sigma$ is not quadratic. Note that by (\ref{E:sigma^tau})
\[   \sigma^{\tau} = \sigma \otimes \chi, \]
and so
\[  \det \sigma^{\tau} = \chi^2 \cdot \det \sigma \ne \det \sigma. \]
Moreover, equations  (\ref{E:stdphi}), (\ref{E:wedge})  and (\ref{E:Msigma}) together imply that $std(\phi)$ decomposes as the sum of an irreducible 2-dimensional representation and an irreducible 3-dimensional representation. Thus we are in situation (II) of the proposition.
\vskip 10pt

\noindent{\bf Case 1-b:}  $\sigma$ is primitive and $\chi := \simi(\phi)|_{W_E}/\det\sigma$ is quadratic. By (\ref{E:sigma^tau}),  we see that $\det \sigma^{\tau} = \det\sigma$, so that $\det \sigma$ extends to $W_F$. Since $\simi(\phi)|_{W_E} =  \chi \cdot \det\sigma$, we deduce that $\chi$ also extends to $W_F$ and one also has
\[  \simi(\phi)^2 = \det\sigma|_{F^{\times}}  \cdot \chi|_{F^{\times}} \quad \text{as characters on $F^{\times}$.} \]
Now the identity (\ref{E:detphiF})
implies that $\chi$ is trivial when restricted to $F^{\times}$. Thus $\chi$ extends to a quadratic character of $W_F$ and determines a quadratic extension $E'/F$. Moreover, $\phi \otimes \omega_{E'/F}\cong \phi$, since
\[  \phi \otimes \omega_{E'/F} = \Ind_{W_E}^{W_F} (\sigma \otimes \chi) = \Ind_{W_E}^{W_F} \sigma^{\tau} = \phi. \]
Thus $\wedge^2 \phi$ contains both $\simi(\phi)$ and $\simi(\phi) \cdot \omega_{E'/F}$, so that $\wedge^2\phi|_{W_{E'}}$ contains $\simi(\phi)|_{W_{E'}}$ with multiplicity two.
Hence, we conclude that
\[
    \phi \cong \Ind_{W_{E'}}^{W_F} \sigma' \quad \text{with}\quad \simi(\phi)|_{W_{E'}} = \det \sigma',
\]
so that we are in situation (III).

 \vskip 10pt

\noindent{\bf Case 2-a:}  $\sigma$ is dihedral and $\omega_0$ extends to a quadratic character $\omega_{E'/F}$. In  this case, it is clear that $\phi \otimes \omega_{E'/F} \cong \phi$, so that
\[  \phi = \Ind_{W_K}^{W_F} \sigma' \]
for some irreducible 2-dimensional $\sigma'$. Now, $\wedge^2 \phi$ contains the characters $\simi(\phi)$ and $\simi(\phi) \cdot \omega_{E'/F}$ and thus $\wedge^2 \phi|_{W_{E'}}$ contains $\simi(\phi)|_{W_{E'}}$ with multiplicity two. As we observed above, the only character which may occur with multiplicity two in $\wedge^2 \phi|_{W_{E'}}$ is $\det \sigma'$. Hence we have
\[  \simi(\phi)|_{W_{E'}} = \det \sigma' \]
and we are in situation (III) of the proposition.

 \vskip 10pt

\noindent{\bf Case 2-b:} $\sigma$ is dihedral and $\omega_0$ extends to a quartic character. In this case, the quadratic extension $K/E$ determined by $\omega_0$ is cyclic quartic over $F$ and
$E$ is the unique quadratic subfield of $K/F$. Set $\Gal(K/F) = \langle \tau \rangle$,
so that $\Gal(K/E) = \langle \tau^2 \rangle$. Then
\[   \phi = \Ind_{W_E}^{W_F} \sigma =  \Ind_{W_K}^{W_F} \chi. \]
Since $\sigma^{\tau} = \sigma^{\vee} \otimes \simi(\phi)|_{W_E}$, we see that
\[  \Ind_{W_K}^{W_E}( \chi^{\tau}) = \Ind_{W_K}^{W_E} (\chi^{-1} \cdot \simi(\phi)|_{W_K}). \]
This implies that
\[  \chi^{\tau} \cdot \chi = \simi(\phi)|_{W_K} \quad \text{or} \quad  \chi^{\tau^3} \cdot \chi = \simi(\phi)|_{W_K}\]
so that  $\chi \cdot \chi^{\tau}$ is $\tau$-invariant. But this implies that $\chi^{\tau^2} = \chi$, which contradicts the irreducibility of $\sigma$. Hence, $\omega_0$ cannot extend to a quartic character of $W_F$.

\vskip 10pt

Therefore we have thus shown that the situations (I), (II) and (III) of the proposition encompass all the possibilities for $\phi$. Moreover, from the behavior of $std(\phi)$, it is clear that these three situations are mutually exclusive. By the theorem by Koch \cite{Ko} mentioned right before the proposition,
 only (III) can occur if $p \ne 2$. So the only thing we are left with is to show that all of the three possibilities actually happen if $p=2$. We have constructed examples of primitive $\phi$ in Proposition \ref{P:dipendra}. Also it is easy to see that (III) can be achieved. Thus it remains to construct examples of situation (II).
\vskip 5pt

For this, let $E/F$ be quadratic extension with $\Gal(E/F) = \langle \tau \rangle$.
Let $\rho$ be a primitive 2-dimensional representation of $W_F$, and let $\chi$ be a character of $W_E$ such that $(\chi^{\tau}/\chi)^2 \ne 1$. Note that $\rho|_{W_E}$ is still primitive, since $Ad(\rho)$ is irreducible and thus $Ad(\rho)|_{W_E}$ cannot contain a 1-dimensional constituent and is thus irreducible also. Now consider the 4-dimensional representation
\[  \phi = \rho \otimes \Ind_{W_E}^{W_F} \chi = \Ind_{W_E}^{W_F} \rho|_{W_E} \otimes \chi. \]
This is an irreducible representation because, if we let $\sigma = \rho|_{W_E} \cdot \chi$,
\[  \sigma^{\tau} = \rho|_{W_E} \cdot \chi^{\tau} = \sigma \cdot (\chi^{\tau}/\chi)  \ne \sigma.\]
Here to show $\neq$, we have used the assumption that $(\chi^{\tau}/\chi)^2 \ne 1$.
\vskip 5pt

To show that $\phi$ is symplectic, we need to show that $\phi$ preserves a nondegenerate symplectic form up to scaling.
But
\[  \wedge^2 \phi = \wedge^2 \rho \otimes Sym^2(\Ind_{W_E}^{W_F} \chi)  \bigoplus
Sym^2\rho \otimes \wedge^2 \Ind_{W_E}^{W_F} \chi. \]
Now $Sym^2(\rho)$ is irreducible, so that the second summand contains no 1-dimensional character.
Moreover,
\[ Ad(\Ind_{W_E}^{W_F} \chi)  = \omega_{E/F} \oplus \Ind_{W_E}^{W_F} \chi^{\tau}/\chi, \]
and the second summand is irreducible. Thus we see that $\wedge^2 \phi$ contains a unique 1-dimensional character, namely $\det \rho \cdot \chi|_{F^{\times}}$ (regarded as a character of $F^{\times}$). In other words, $\phi$ preserves a unique symplectic form up to scaling (necessarily nondegenerate since $\phi$ is irreducible) and  the similitude character $\simi(\phi)$ satisfies
\[  \simi(\phi)|_{W_E} =  \det \rho |_{W_E} \cdot \chi \cdot \chi^{\tau} = \det \sigma \cdot (\chi^{\tau}/\chi) \ne \det\sigma. \]
Thus $\phi$ satisfies all the requirements of situation (II).
\end{proof}

\vskip 15pt

\section{\bf Sizes of $L$-packets of $\Sp_4$}\label{S:L-packet}


In this section, we determine the sizes of the L-packets of $\Sp_4$. More precisely, given an L-parameter $\phi: WD_F \longrightarrow \GSp_4(\C)$,  we describe the size of the $L$-packet of $\Sp_4$ associated to $\varphi =std(\phi)$ in terms of Galois theoretic properties of $\phi$.
\vskip 10pt

First of all, it is quite elementary to see that the possible sizes of the $L$-packet are given by
\[
    \# \mathcal{L}_{\varphi}=
    \begin{cases}
    1,2, 4\text{ or }8, &\text{ if $p\neq 2$;}\\
    1,2, 4,8\text{ or }16, &\text{ if $p= 2$}.
    \end{cases}
\]
When $p\ne 2$, this follows immediately from Proposition \ref{P:exact} and the fact that
$\#A_{\phi} \leq 2$ and $\#I(\phi) =1$, $2$ or $4$ since there are only three quadratic characters. To deal with the case $p=2$, it is probably easier to
work with the parameter $\varphi$. One has the following general statement \cite[Corollary 6.6]{GP}:
\vskip 5pt

\begin{Lem} \label{L:size}
Let $\varphi: WD_F \longrightarrow \SO_N(\C)$ be an admissible homomorphism with $N$ odd, and regard it as an $N$-dimensional representation of $WD_F$ with isotypic decomposition
\[  \varphi = \bigoplus_i n_i \cdot M_i. \]
Then
\[  \pi_0(Z_{\SO_N}(Im(\varphi))) = (\Z/2\Z)^{r-1} \]
where
\[  r = \# \{i: \text{$M_i^{\vee} \cong M_i$ and $M_i$ is  an orthogonal representation} \}. \]
\end{Lem}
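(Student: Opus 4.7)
The plan is to compute the centralizer in $\OO_N(\C)$ via Schur's lemma and then impose the determinant-one condition to descend to $\SO_N(\C)$.

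First, since $\varphi$ is semisimple, Schur's lemma identifies
\[ Z_{\GL_N}(Im(\varphi)) \cong \prod_j \GL_{n_j}(\C), \]
with one factor for each isomorphism class of irreducible constituent appearing in $\varphi$. Next, I would bring in the $\varphi$-invariant nondegenerate symmetric bilinear form $B$ on $\C^N$. By Schur's lemma applied to intertwiners between distinct isotypic components, $B$ necessarily pairs the $M_i$-isotypic subspace with the $M_i^\vee$-isotypic subspace, and the restriction to each such pair (or to each self-dual isotypic block) is forced to be nondegenerate by the nondegeneracy of $B$ on the whole space.

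Consequently the centralizer in $\OO_N(\C)$ decomposes as a direct product indexed by isomorphism classes of irreducible constituents taken up to duality. A pair $\{M_i, M_i^\vee\}$ with $M_i\not\cong M_i^\vee$ contributes a single connected copy of $\GL_{n_i}(\C)$, acting on the $M_i$-multiplicity space with the inverse-transpose on the $M_i^\vee$-multiplicity space so as to preserve the pairing. A self-dual $M_i$ of orthogonal type contributes $\OO_{n_i}(\C)$, because the restriction of $B$ is, up to a nonzero scalar, the tensor product of the (unique up to scalar) $W_F$-invariant symmetric form on $M_i$ with a nondegenerate symmetric form on the multiplicity space $\C^{n_i}$. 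A self-dual $M_i$ of symplectic type contributes the connected group $\Sp_{n_i}(\C)$ by the analogous tensor-product argument with two antisymmetric forms. Collecting components, one obtains $\pi_0(Z_{\OO_N}(Im(\varphi))) = (\Z/2\Z)^r$, where $r$ is as in the statement.

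Finally, to pass from $\OO_N$ to $\SO_N$: since $N$ is odd, the element $-I_N\in\OO_N(\C)$ is central, hence lies in $Z_{\OO_N}(Im(\varphi))$, has $\det(-I_N)=-1$, and cannot lie in the identity component of the centralizer (on which determinant is constantly $1$). Therefore the determinant induces a surjection on component groups, giving the short exact sequence
\[ 1 \longrightarrow \pi_0(Z_{\SO_N}(Im(\varphi))) \longrightarrow (\Z/2\Z)^r \longrightarrow \Z/2\Z \longrightarrow 1, \]
from which the claim $\pi_0(Z_{\SO_N}(Im(\varphi))) = (\Z/2\Z)^{r-1}$ is immediate. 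The only step that calls for any care is the block-diagonal decomposition of $B$ with respect to the isotypic decomposition, and the identification of the restriction on a self-dual isotypic block as a tensor product of invariant forms; this is routine but depends on invoking Schur's lemma carefully for the uniqueness of invariant forms on irreducibles.
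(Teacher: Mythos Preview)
Your argument is correct. Note, however, that the paper does not give its own proof of this lemma: it is simply quoted from \cite[Corollary 6.6]{GP}, so there is no in-paper argument to compare against. What you have written is the standard centralizer computation and is presumably close to what Gross--Prasad do.

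Two small remarks. First, the exactness on the left of your short exact sequence amounts to the observation that $Z_{\OO_N}(Im(\varphi))$ and $Z_{\SO_N}(Im(\varphi))$ share the same identity component (since the identity component of the former already has determinant $1$); this is what makes $\pi_0(Z_{\SO_N}(Im(\varphi)))$ a subgroup of $\pi_0(Z_{\OO_N}(Im(\varphi)))$, and you are using it implicitly. Second, it is worth recording that $r\geq 1$ automatically: non-self-dual pairs and symplectic-type self-dual constituents each contribute even-dimensional summands, so the oddness of $N$ forces at least one orthogonal-type constituent to appear.
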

\vskip 5pt

The lemma immediately implies:
\vskip 5pt

\begin{Cor} \label{C:size}
Let $\varphi: WD_F \longrightarrow \SO_5(\C)$ be an L-parameter for $\Sp_4(F)$. Then
\[
    \# \mathcal{L}_{\varphi}=
    \begin{cases}
    1,2, 4\text{ or }8, &\text{ if $p\neq 2$;}\\
    1,2, 4,8\text{ or }16, &\text{ if $p= 2$}.
    \end{cases}
\]
\end{Cor}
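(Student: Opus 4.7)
The plan is to combine Theorem \ref{T:LLC_Sp} with Lemma \ref{L:size}. By Theorem \ref{T:LLC_Sp} we have $\#\mathcal{L}_\varphi = \#A_\varphi$, so it suffices to compute the possible values of $\#\pi_0(Z_{\SO_5}(\mathrm{Im}(\varphi)))$ as $\varphi$ varies over L-parameters for $\Sp_4$. Applying Lemma \ref{L:size} with $N=5$ and writing the isotypic decomposition $\varphi = \bigoplus_i n_i M_i$ viewed as a $5$-dimensional representation of $WD_F$, we obtain $\#A_\varphi = 2^{r-1}$, where $r$ is the number of distinct $M_i$ that are self-dual and orthogonal. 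Thus the entire problem reduces to determining which values of $r$ can be achieved by an $\SO_5(\C)$-valued admissible homomorphism, as a function of $p$.

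Since $r$ counts only distinct constituents, multiplicities $n_i > 1$ only decrease $r$, and I may enumerate partitions of $5$ into the dimensions of the distinct orthogonal self-dual summands: $5$, $4{+}1$, $3{+}2$, $3{+}1{+}1$, $2{+}2{+}1$, $2{+}1{+}1{+}1$, and $1{+}1{+}1{+}1{+}1$. These give upper bounds $r \le 1, 2, 2, 3, 3, 4, 5$ respectively. The maximum $r=5$ would force $\varphi$ to be a direct sum of five distinct $1$-dimensional orthogonal constituents, which are exactly the quadratic characters of $W_F$. Here local class field theory is the critical input: the number of quadratic characters of $W_F$ equals $\# F^\times/(F^\times)^2$, which is $4$ when $p \ne 2$ and is at least $8$ when $p=2$. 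Hence when $p \ne 2$ the partition $1{+}1{+}1{+}1{+}1$ into distinct orthogonal $1$-dimensional summands is impossible, ruling out $r=5$, so that $r \le 4$ and $\#\mathcal{L}_\varphi \in \{1,2,4,8\}$. When $p=2$, one may choose five distinct quadratic characters whose product is trivial (so that $\varphi$ lands in $\SO_5(\C)$), yielding $r=5$ and $\#\mathcal{L}_\varphi = 16$.

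It remains to exhibit examples realizing each listed value, which is mostly a direct construction: for $r=1$ take a primitive irreducible $\varphi$ of type (I) in Proposition \ref{P:parameter}; for $r=2, 3, 4$, take direct sums of a dihedral orthogonal $2$-dimensional constituent with one, two, or three distinct quadratic characters, or similar partitions, using twisting by a quadratic character to correct the total determinant to $1$ when needed. The main (mild) obstacle is ensuring that the total determinant lies in $\SO_5(\C)$ rather than only in $\OO_5(\C)$, but the abundance of quadratic characters in both residue characteristics makes such twisting always possible. This gives both the upper bounds and the realizability of each claimed size, completing the proof.
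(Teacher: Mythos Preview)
Your argument is correct and is essentially the paper's: apply Lemma \ref{L:size} to get $\#A_\varphi = 2^{r-1}$ with $r \le 5$, and then use the count of quadratic characters of $F^\times$ to force $r \le 4$ when $p \ne 2$ (the paper phrases the $p \ne 2$ bound equivalently via Proposition \ref{P:exact}, using $\#A_\phi \le 2$ and $\#I(\phi)\le 4$, but the underlying input is the same). One remark: the corollary only asserts the list of \emph{possible} sizes, not that each is realized---the paper says so explicitly immediately afterward and defers realizability to the remainder of \S\ref{S:L-packet}---so your realizability sketch is not needed here; it also has a small gap, since type (I) of Proposition \ref{P:parameter} exists only when $p=2$, and for $p\ne 2$ you would instead realize $r=1$ by a parameter such as $\mathbf{1}\oplus\chi\oplus\chi^{-1}\oplus\mu\oplus\mu^{-1}$ with $\chi,\mu$ non-quadratic.
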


 \vskip 5pt

Of course, the corollary does not show that all possibilities for $\#L_{\varphi}$ occur.
For the rest of the section, we will show that all of them do occur, together with precise information on when each case happens in terms of the Galois theoretic properties of any $\phi\in\Phi(\GSp_4)$ for which $\varphi=std(\phi)$.\\
Our result is then the analog of the following well-known result for $\SL_2$ (cf. \cite{Sh}).
\vskip 10pt

\begin{Prop}
Let $\sigma: WD_F \longrightarrow \GL_2(\C)$ be an irreducible representation  and $\sigma_0 = Ad(\sigma): WD_F \longrightarrow \SO_3(\C)$ the associated discrete series L-parameter for $\SL_2$. Then
\[  \# L_{\sigma_0} = \begin{cases}
1, \text{  if $\sigma$ is primitive or $\sigma$ is non-trivial on $\SL_2(\C)$;} \\
2, \text{  if $\sigma$ is dihedral w.r.t. 1 quadratic field;} \\
4, \text{  if $\sigma$ is dihedral w.r.t. 3 quadratic fields.} \end{cases} \]

\vskip 5pt

\noindent In particular, the size of the $L$-packet $\# L_{\sigma_0}$ is equal to the number of characters $\omega$ such that $\sigma \otimes \omega \cong \sigma$ and the non-trivial such characters are precisely those which occur in $\sigma_0= Ad(\sigma)$. Moreover, the third case above happens iff $\sigma=\Ind_{W_E}^{W_F}\rho$ for some quadratic $E/F$ with $\rho^c/\rho$ a non-trivial quadratic character for the non-trivial element $c\in\Gal(E/F)$.
\end{Prop}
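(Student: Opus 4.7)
The plan is to imitate Section \ref{S:Sp(2n)} (following \cite{GK}) with the pair $(\GSp_{2n}, \Sp_{2n})$ replaced by $(\GL_2, \SL_2)$ and the projection $\GSpin_{2n+1}(\C) \to \SO_{2n+1}(\C)$ replaced by $\GL_2(\C) \to \PGL_2(\C) \cong \SO_3(\C)$. The same argument as in Proposition \ref{P:exact} then supplies a short exact sequence
\[
1 \longrightarrow A_\sigma \longrightarrow A_{\sigma_0} \longrightarrow I(\sigma) \longrightarrow 1,
\]
where $I(\sigma) = \{\chi : \sigma \otimes \chi \cong \sigma\}$. Schur's lemma applied to the irreducible $\sigma$ gives $Z_{\GL_2(\C)}(Im(\sigma)) = \C^\times$, which is connected, so $A_\sigma = 1$ and hence $\#L_{\sigma_0} = \#A_{\sigma_0} = \#I(\sigma)$. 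Any $\chi \in I(\sigma)$ is quadratic by comparing determinants, and $\chi \in I(\sigma)$ iff $\chi$ embeds in $\sigma \otimes \sigma^\vee = {\bf 1} \oplus Ad(\sigma)$; so the non-trivial elements of $I(\sigma)$ are precisely the $1$-dimensional constituents of $\sigma_0 = Ad(\sigma)$, which proves the ``In particular'' sentence.

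To establish the trichotomy, I would first dispose of the cases $\#I(\sigma) = 1$: if $\sigma$ is primitive then any non-trivial $\omega \in I(\sigma)$ would present $\sigma$ as $\Ind_{W_E}^{W_F} \rho$ for $E = \ker \omega$, contradicting primitivity; if $\sigma = \rho \boxtimes S_2$ is non-trivial on $\SL_2(\C)$, then comparing $\SL_2(\C)$-isotypic components of $\sigma \otimes \chi \cong \sigma$ forces $\chi = 1$. In the dihedral case, writing $\sigma = \Ind_{W_E}^{W_F} \rho$ with $\rho^c \ne \rho$ and $\Gal(E/F) = \langle c \rangle$, one computes
\[
Ad(\sigma) = \omega_{E/F} \oplus \Ind_{W_E}^{W_F}(\rho^c/\rho).
\]
When $(\rho^c/\rho)^2 \ne 1$ the second summand is irreducible, $\omega_{E/F}$ is the sole $1$-dimensional constituent, and $\#I(\sigma) = 2$. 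When $\rho^c/\rho$ is a non-trivial quadratic character, it is $c$-invariant and extends to two characters $\eta_1, \eta_2 = \eta_1 \omega_{E/F}$ of $W_F$; the Mackey identity
\[
\sigma \otimes \eta_i = \Ind_{W_E}^{W_F}(\rho \cdot \eta_i|_{W_E}) = \Ind_{W_E}^{W_F} \rho^c \cong \sigma
\]
places $\eta_i$ in $I(\sigma)$, so $\{1, \omega_{E/F}, \eta_1, \eta_2\} \subseteq I(\sigma)$, and the bound $\#I(\sigma) = \#A_{\sigma_0} \leq 4$ coming from Lemma \ref{L:size} forces equality.

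The main technical obstacle is verifying that $\eta_1, \eta_2$ are genuine quadratic characters of $W_F$ rather than quartic characters whose restrictions to $W_E$ happen to be quadratic. This is resolved by the observation that $\eta_i \in I(\sigma)$, together with the general fact already used above that self-twists of an irreducible representation are automatically quadratic: taking $\det$ of $\sigma \otimes \eta_i \cong \sigma$ yields $\eta_i^2 = 1$. The final assertion of the proposition -- that the three-field case holds if and only if $\sigma = \Ind_{W_E}^{W_F} \rho$ with $\rho^c/\rho$ a non-trivial quadratic character -- is then an immediate readout of the dihedral case analysis above.
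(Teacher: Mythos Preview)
The paper does not supply a proof of this proposition; it is stated as ``the analog of the following well-known result for $\SL_2$'' with a reference to \cite{Sh}. Your argument is correct and self-contained, and it is exactly in the spirit of the paper's own methods: you specialize the short exact sequence of Proposition~\ref{P:exact} to the pair $(\GL_2,\SL_2)$, use Schur's lemma to kill $A_\sigma$, and then read off $I(\sigma)$ from the decomposition $Ad(\sigma)=\omega_{E/F}\oplus \Ind_{W_E}^{W_F}(\rho^c/\rho)$.

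One small redundancy: the bound $\#I(\sigma)\le 4$ that you extract from Lemma~\ref{L:size} is already implicit in your first paragraph, since you showed that the non-trivial elements of $I(\sigma)$ are exactly the $1$-dimensional constituents of the $3$-dimensional representation $Ad(\sigma)$. So once you exhibit four distinct elements $\{1,\omega_{E/F},\eta_1,\eta_2\}$ in $I(\sigma)$, equality is immediate without appealing to Lemma~\ref{L:size}. Your handling of the ``quartic versus quadratic'' issue for the extensions $\eta_i$ is clean: the Mackey identity $\sigma\otimes\eta_i\cong\sigma$ holds for any extension $\eta_i$, and then $\eta_i^2=1$ drops out of determinants.
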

\vskip 10pt

We now consider the question of determining the size of the L-packet of $\Sp_4$ associated to the L-parameter $\varphi = std(\phi)$.  By our main theorem, this is given by
\[
     N(\phi) := \text{size of the component group $A_{std(\phi)}$}.
\]
If $\phi$ is a discrete series parameter, then $std(\phi)$ is a discrete series parameter for $\Sp_4$ and is the multiplicity free direct sum of orthogonal submodules as a 5-dimensional representation, in which case by Lemma \ref{L:size}, we know
\[
    \log_2 N(\phi) = (\text{the number of irreducible constituents of $std(\phi)$}) -1.
\]
This gives a convenient way of calculating $N(\phi)$ for discrete series parameters.
Recall that we have defined
\[
    I(\phi) = \{ \text{quadratic characters $\chi$ of $W_F$ with $\phi \otimes \chi \cong \phi$} \}
\]
and have shown that
\[
    N(\phi) = \#L_{\phi}\cdot\#I(\phi)= \begin{cases}
    \#I(\phi), \text{  if $L_{\phi}$ is a singleton;} \\
    2 \cdot \# I(\phi), \text{  otherwise.} \end{cases}
\]
In some cases (especially for non-discrete-series parameters), it will be easier to compute $N(\phi)$ by directly determining $I(\phi)$. Note that $I(\phi)$ is an elementary abelian 2-group and we shall frequently specify $I(\phi)$ by writing down a set of generators for it. For example, we shall write $I(\phi) = \langle \chi_1, \chi_2,\dots,\chi_k \rangle$ to indicate that it is generated by the $\chi_i$'s.
\vskip 5pt

The following lemma says that one can determine $I(\phi)$ by regarding $\phi$ as an L-parameter for $\GL_4$ and is useful for our purposes below:
  \vskip 5pt

\begin{Lem}\label{L:restriction}
Let $\phi \in \Phi(\GSp_4)$ and consider the natural inclusion
\[  \iota:  \GSp_4(\C) \hookrightarrow \GL_4(\C). \]
Then
\[
I(\phi)=\{\text{quadratic characters $\chi$}: \iota\circ \phi\cong(\iota\circ\phi)\otimes\chi\}.
\]
\end{Lem}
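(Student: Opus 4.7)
The forward inclusion is immediate: since $\GSp_4(\C)\subset \GL_4(\C)$, any $g\in \GSp_4(\C)$ witnessing $\phi\cong \phi\otimes\chi$ as $\GSp_4$-parameters also witnesses $\iota\phi\cong \iota\phi\otimes\chi$ as $\GL_4$-parameters.

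For the reverse inclusion, suppose $\chi$ is quadratic and there exists $g\in \GL_4(\C)$ with $g\phi(w)g^{-1}=\chi(w)\phi(w)$ for all $w\in WD_F$. Let $J$ be the symplectic form on $V=\C^4$ preserved by $\phi$ with similitude $\simi(\phi)$. The key point, which uses $\chi^2=1$, is that $\phi\otimes\chi$ preserves the \emph{same} form $J$ with the \emph{same} similitude character $\simi(\phi)$. A direct computation then gives, for every $w\in WD_F$,
\[
  \phi(w)^T (g^T J g)\,\phi(w) \;=\; (g\phi(w))^T J\,(g\phi(w)) \;=\; \chi(w)^2\,\simi(\phi)(w)\cdot g^T J g \;=\; \simi(\phi)(w)\cdot g^T J g,
\]
so $B:=g^T J g$ is another nondegenerate $\phi$-invariant antisymmetric form on $V$ of similitude $\simi(\phi)$.

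The plan is to modify $g$ by an element of the centralizer $Z:=Z_{\GL_4(\C)}(\iota\phi)$ so that the resulting intertwiner preserves $J$ up to scalar. Explicitly, if we can find $h\in Z$ with $h^T J h = \lambda\cdot B$ for some $\lambda\in\C^\times$, then $g':=g h^{-1}$ still intertwines $\phi$ and $\phi\otimes\chi$ (because $h$ commutes with $\phi$) and satisfies $(g')^T J g' = h^{-T}(g^T J g)h^{-1} = \lambda^{-1} J$, placing $g'$ in $\GSp_4(\C)$ as required. Hence the whole lemma reduces to the claim that $Z$ acts transitively, modulo scalars, on the nondegenerate $\phi$-invariant antisymmetric forms on $V$ with similitude $\simi(\phi)$.

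To establish this, introduce $A:=J^{-1}B\in \GL(V)$, characterized by $B(x,y)=J(Ax,y)$. A short calculation shows $A\in Z$, and the antisymmetry of both $J$ and $B$ forces $A$ to be self-adjoint with respect to $J$, i.e. $J(Ax,y)=J(x,Ay)$. Now the commutant $Z$ is a product of matrix algebras over $\C$ (one block per isotypic summand of $\iota\phi$), and the $J$-adjoint is an anti-involution on this semisimple $\C$-algebra. The main step is then to produce a ``square root'' $h\in Z$ of $A$ compatible with this anti-involution, i.e.\ satisfying $h^* h = A$; this yields $h^T J h = J A = B$ and finishes the proof. The main technical obstacle is precisely this square-root construction, especially when $A$ has a nontrivial unipotent part: one handles it via the Jordan decomposition $A=A_sA_u$ inside $Z$, extracting square roots of $A_s$ and $A_u$ separately using algebraic closure of $\C$ and the explicit product-of-matrix-algebras structure of $Z$, with care to respect the $J$-adjoint on each isotypic block.
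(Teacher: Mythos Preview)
Your argument is correct and can be completed as you indicate: the $J$-self-adjoint $A\in Z^{\times}$ really does admit $h\in Z^{\times}$ with $h^*h=A$. The Jordan-decomposition strategy works because $A^*=A$ forces $A_s^*=A_s$ and $A_u^*=A_u$; the spectral projectors of $A_s$ are then $*$-fixed, so one takes scalar square roots eigenvalue by eigenvalue, while the unipotent part has a polynomial square root $\exp(\tfrac12\log A_u)$ which is automatically $*$-fixed and commutes with the semisimple piece.

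That said, your route is genuinely different from the paper's. The paper never manipulates symplectic forms: it simply invokes \cite[Lemma~6.1]{GT}, which says the map
\[
\Phi(\GSp_4)\longrightarrow \Phi(\GL_4)\times\Phi(\GL_1),\qquad \phi\longmapsto (\iota\circ\phi,\ \simi(\phi)),
\]
is injective. Since $\chi^2=1$ gives $\simi(\phi\otimes\chi)=\simi(\phi)\cdot\chi^2=\simi(\phi)$, the hypothesis $\iota\phi\cong(\iota\phi)\otimes\chi$ then forces $\phi\cong\phi\otimes\chi$ in $\Phi(\GSp_4)$ in one line. What you have written is, in effect, a direct proof of the instance of that injectivity lemma needed here. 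The paper's approach is shorter and modular (the injectivity is a general structural fact, proved once elsewhere); yours is self-contained and makes transparent exactly where $\chi^2=1$ is used, namely to ensure that the transported form $B=g^TJg$ has the \emph{same} similitude character as $J$, so that $A=J^{-1}B$ lands in the centralizer $Z$.
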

\vskip 5pt

\begin{proof}
It is obvious that the LHS is contained in the RHS. Conversely, suppose that
$\chi$ lies in the RHS. By \cite[Lemma 6.1]{GT}, one knows that the natural map
\[  \Phi(\GSp_4) \longrightarrow \Phi(\GL_4) \times \Phi(\GL_1) \]
given by
\[  \phi \mapsto (\iota \circ \phi, \simi(\phi))\]
is injective. However, since $\chi$ lies in the RHS, we have:
\[  (\iota \circ (\phi \otimes \chi),  \simi(\phi \otimes \chi)) = ((\iota \circ \phi)\otimes \chi,  \simi(\phi)) = (\iota \circ \phi, \simi(\phi)). \]
Hence, we conclude by injectivity that $\phi \otimes \chi = \phi$ , as desired.
\end{proof}
\vskip 15pt

The rest of this paper is devoted to the determination of the number $N(\phi)$ and the group $I(\phi)$.
\vskip 10pt

\noindent{\bf \underline{Discrete Series Parameters}}

\vskip 10pt

We begin with the most nondegenerate cases, namely those treated in Proposition \ref{P:parameter}.
\vskip 10pt

\begin{Thm}
Suppose that $\phi: W_F \longrightarrow \GSp_4(\C)$ is an irreducible representation with similitude character $\simi(\phi)$.
\vskip 5pt

\begin{enumerate}[\upshape (I)]
\item If $\phi$ is primitive, i.e. as in situation {\upshape (I)} of Proposition \ref{P:parameter}, then
$N(\phi) =1$.
\vskip 5pt

\item If $\phi$ is as in situation {\upshape (II)} of Proposition \ref{P:parameter}, then $N(\phi) = 2$ and $I(\phi) = \langle \omega_{E/F} \rangle$.

\vskip 5pt

\item If $\phi = \Ind_{W_E}^{W_F} \sigma$ is as in situation {\upshape (III)} of Proposition \ref{P:parameter}, then we have the following two cases, each of which is further divided into several subcases.
\vskip 5pt

\begin{enumerate}[(a)]
\item Suppose that $\sigma^{\tau} \ne \sigma \otimes \chi$ for any character $\chi$. Then, we have:
\[  N(\phi) = \begin{cases}
4, \text{ if $\sigma$ is dihedral w.r.t. a quadratic $K/E$ such that $K/F$ is biquadratic;} \\
2, \text{  otherwise.} \end{cases} \]
and respectively
\[  I(\phi) = \begin{cases}
\langle  \omega_{E/F} , \omega_{E'/F} \rangle, \text{  with $E'\ne E$ a quadratic extension in the biquadratic $K/F$}; \\
\langle \omega_{E/F} \rangle.
\end{cases} \]

\vskip 5pt
\noindent More precisely, we have:
\begin{enumerate}[({a}1)]

\item if $\sigma$ is primitive, then $N(\phi) = 2$.

\item if $\sigma = Ind_{W_K}^{W_E} \rho $ with $Gal(K/F) = \Z/4\Z$, then $N(\phi) = 2$.

\item if $\sigma = Ind_{W_K}^{W_E} \rho $ with $Gal(K/F) = \Z/2\Z \times \Z/2\Z$, then $N(\phi) = 4$.

\item if $\sigma = Ind_{W_K}^{W_E} \rho $ with $K/F$ non-Galois, then
\[  N(\phi) = \begin{cases}
2, \text{  if $\sigma^{\tau}|_{W_K} \cdot \rho$ does not extend to $W_F$;} \\
4, \text{  otherwise.} \end{cases} \]
Moreover, $\sigma^{\tau}|_{W_K} \cdot \rho$ is extendable to $W_F$ iff $\sigma$ is dihedral with respect to a $K'/E$ with $K'/F$ biquadratic, in which case we are reduced to situation (a3) by replacing $K$ by $K'$.
\end{enumerate}

\vskip 10pt

\item Suppose that $\sigma^{\tau} = \sigma \otimes \chi$ (so that $\chi$ is necessarily quadratic).

\vskip 5pt

\begin{enumerate}[({b}1)]
\item  If $\chi^{\tau} \ne \chi$, then $\sigma$ is dihedral with respect to $\chi \cdot \chi^{\tau}$, which determines a biquadratic extension $K/F$. In this case,
\[  N(\phi) = 4 \]
and
\[
    I(\phi) = \langle \omega_{E/F}, \omega_{E'/F}\rangle,
\]
where $E' \ne E$ is a quadratic extension of $F$ contained in $K$.
\vskip 5pt

\item If $\chi^{\tau} = \chi$, then there is a character $\lambda$ of $W_E$ such that $\chi = \lambda^{\tau}/\lambda$ and $\sigma \otimes \lambda^{-1}$ extends to a representation $\pi$ of $W_F$.  In other words, $\chi$ determines a biquadratic extension $K/F$ with quadratic subfields $E$, $E'$ and $E''$, and
\[  \sigma = \pi|_{W_E} \otimes \lambda \quad  \text{and}  \quad \phi = \pi \otimes \Ind_{W_E}^{W_F} \lambda. \]
Then we have:
\[  N(\phi) = \begin{cases}
4, \text{  if $\pi$ is primitive;} \\
8, \text{  if $\pi$ is dihedral w.r.t. 1 quadratic field $M/F$;} \\
16, \text{  if $\pi$ is dihedral w.r.t. 3 quadratic fields $M_i/F$,} \end{cases} \]
and respectively
\[  I(\phi) = \begin{cases}
\langle \omega_{E/F}, \omega_{E'/F} \rangle; \\
\langle \omega_{E/F}, \omega_{E'/F}, \omega_{M/F} \rangle; \\
\langle \omega_{E/F}, \omega_{E'/F}, \omega_{M_1/F}, \omega_{M_2/F}  \rangle.
 \end{cases} \]
\end{enumerate}
Moreover situation (b2) can occur only when $p=2$.

\end{enumerate}
\end{enumerate}
\end{Thm}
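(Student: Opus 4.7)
The plan is to reduce $N(\phi)$ to an enumeration of induction realizations of $\phi$. Since $\phi$ is irreducible as a 4-dimensional representation, Schur's lemma forces $Z(Im(\phi)) = \C^\times$ inside $\GL_4(\C)$, and hence also inside $\GSp_4(\C)$, so that $A_\phi$ is trivial and $L_\phi$ is a singleton. By Proposition \ref{P:exact} this reduces the computation of $N(\phi)$ to that of $\#I(\phi)$. Using Lemma \ref{L:restriction}, I would compute $I(\phi)$ by asking when $\phi$ is fixed under tensoring by a quadratic character $\omega_{E'/F}$. A Clifford--Mackey argument shows that for irreducible $\phi$ and nontrivial $\omega_{E'/F}$, one has $\phi \otimes \omega_{E'/F} \cong \phi$ if and only if $\phi = \Ind_{W_{E'}}^{W_F} \sigma'$ for some $2$-dimensional $\sigma'$. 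Thus the theorem reduces to enumerating the quadratic extensions $E'/F$ from which $\phi$ admits an induced realization. Cases (I) and (II) are then essentially immediate: primitivity rules out (I), and in (II) any extra $\omega_{E'/F} \in I(\phi)$ can be excluded by inspecting the decomposition of $std(\phi)$ provided by Proposition \ref{P:parameter} and invoking the mutual exclusivity of the three situations there.

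For situation (III), $\omega_{E/F} \in I(\phi)$ is automatic from $\phi = \Ind_{W_E}^{W_F} \sigma$. In sub-case (a), where $\sigma^\tau$ is not a twist of $\sigma$, the decomposition $\phi|_{W_E} = \sigma \oplus \sigma^\tau$ has two distinct irreducible constituents. Any additional $\omega_{E'/F}$ in $I(\phi)$ yields a second realization $\phi = \Ind_{W_{E'}}^{W_F} \sigma'$ with $K := E E'$ biquadratic over $F$; restricting both expressions for $\phi$ to $W_K$ and comparing forces $\sigma$ to be dihedral with respect to some quadratic $K/E$ whose compositum with $F$ is biquadratic, i.e.\ $\sigma = \Ind_{W_K}^{W_E} \rho$. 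The refined classification (a1)--(a4) then follows directly: primitivity of $\sigma$ rules out (a1); in (a2) the cyclic quartic $K/F$ has a unique quadratic subfield (namely $E$); (a3) supplies a second quadratic subfield directly from biquadratic $K/F$; and in the non-Galois case (a4), a Mackey/Frobenius reciprocity computation reduces the existence of a second realization to the extendability of $\sigma^\tau|_{W_K} \cdot \rho$ to a representation of $W_F$.

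In sub-case (b), where $\sigma^\tau \cong \sigma \otimes \chi$ with $\chi$ quadratic, I would split on whether $\chi$ is $\tau$-invariant. In (b1), $\chi^\tau \ne \chi$ implies $\chi$ does not extend to $W_F$, but $\chi \cdot \chi^\tau$ is nontrivial quadratic and $\tau$-invariant, hence extends to some $\omega_{E'/F}$ that one checks lies in $I(\phi)$; a direct verification exhausts $I(\phi) = \langle \omega_{E/F}, \omega_{E'/F}\rangle$, giving $N(\phi) = 4$. In (b2), $\chi$ itself extends to $W_F$ as some $\omega_{E'/F}$; writing $\chi = \lambda^\tau/\lambda$, the representation $\sigma \otimes \lambda^{-1}$ is $\tau$-invariant and extends to a $2$-dimensional $\pi$ on $W_F$, yielding the factorization $\phi = \pi \otimes \Ind_{W_E}^{W_F} \lambda$. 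A direct computation using this factorization shows
\[
    I(\phi) = \langle \omega_{E/F}, \omega_{E'/F}\rangle \cdot I(\pi),
\]
where $I(\pi) = \{\chi' : \pi \otimes \chi' \cong \pi\}$ is the classical twist stabilizer known from the $\SL_2$-theory to have size $1$, $2$, or $4$ depending on whether $\pi$ is primitive, dihedral with respect to one quadratic field, or dihedral with respect to three. This yields $N(\phi) = 4, 8, 16$ respectively; the constraint $p = 2$ in (b2) comes from observing that a quadratic character of the form $\lambda^\tau/\lambda$ with $\lambda$ not $\tau$-invariant forces wild ramification available only in residue characteristic $2$.

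The main obstacle will be the bookkeeping in sub-cases (a4) and (b2): one must track several quadratic extensions, their biquadratic compositum, and the delicate extension/restriction behaviour of characters via Mackey and Hilbert $90$. In particular, verifying that the proposed generating sets for $I(\phi)$ actually exhaust it (rather than merely sit inside) requires explicit analysis of how an arbitrary second induced realization interacts with the given one.
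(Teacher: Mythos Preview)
Your approach is genuinely different from the paper's. The paper never attempts to enumerate induced realizations of $\phi$ directly; instead it writes
\[
std(\phi) = \omega_{E/F} \oplus \simi(\phi)^{-1}\cdot M(\sigma)
\]
and, using Lemma~\ref{L:size}, reads $N(\phi)$ off from the number of irreducible constituents of the Asai lift $M(\sigma)$. Each of the subcases (a1)--(a4), (b1), (b2) is then a computation of how $M(\sigma)$ (or, in (b2), of $\wedge^2\phi$) splits. Your route --- computing $I(\phi)$ via Clifford theory --- yields the lower bounds on $N(\phi)$ very cleanly, and is arguably more conceptual. The cost is that the \emph{upper} bounds (showing your proposed generators exhaust $I(\phi)$) are no longer automatic; the paper's constituent count delivers both bounds at once. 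For instance, in case (a) you should verify that $\sigma$ cannot be dihedral with respect to two distinct quadratic $K_i/E$ both biquadratic over $F$: if it were, a short computation shows $\rho^{\tau_1}/\rho$ is $\Gal(K/E)$-invariant, forcing $\sigma^\tau = \sigma\otimes\chi$ and hence case (b), not (a). This is the kind of check your ``direct verification'' must include.

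There are two concrete gaps in (b2). First, you assert that $\chi^\tau = \chi$ implies $\chi$ extends to a \emph{quadratic} character $\omega_{E'/F}$ of $W_F$ and that $\chi = \lambda^\tau/\lambda$. But a $\tau$-invariant quadratic character of $W_E$ could a priori extend only to a quartic character (with square $\omega_{E/F}$); equivalently, one must show $\chi|_{F^\times} = 1$. This is part of the content of the theorem, and the paper proves it by a contradiction argument with $\wedge^2\phi$ (showing that a quartic extension $\tilde\chi$ would force $\wedge^2\phi = \wedge^2\phi \cdot \omega_{E/F}$, incompatible with the multiplicity-one occurrence of $\tilde\chi$). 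Second, your reason for the constraint $p=2$ is incorrect: a nontrivial quadratic character of the form $\lambda^\tau/\lambda$ exists for any $p$ (take $E/F$ unramified and a suitable tame $\lambda$). The actual obstruction is that irreducibility of $\phi$ forces $\pi$ not to be dihedral with respect to any of $E$, $E'$, $E''$; when $p\neq 2$ these exhaust the quadratic extensions of $F$, so $\pi$ would have to be primitive, which is impossible in odd residue characteristic.
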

\vskip 5pt

\begin{proof}
The statements (I) and (II) are clear from Proposition \ref{P:parameter} and so we focus on (III).
Hence, we are in situation (III) of Proposition \ref{P:parameter}, so that
\[  \phi = \Ind_{W_E}^{W_F} \sigma \quad \text{with} \quad \simi(\phi)|_{W_E} = \det \sigma. \]
In this case, we have
\[  std(\phi) = \omega_{E/F} \oplus \simi(\phi)^{-1} \cdot M(\sigma). \]
So we need to determine how many irreducible constituents $M(\sigma)$ has.
As in the statement of the proposition, we consider the two cases (a) and (b).
\vskip 5pt

\noindent\underline{(a) $\sigma^{\tau} \ne \sigma \otimes \chi$ for any character
$\chi$}: In this case we see that
\[  M(\sigma)|_{W_E} = \sigma^{\tau} \otimes \sigma \]
does not contain any 1-dimensional constituent, so that
\[  M(\sigma)|_{W_E} = 4 \quad \text{or} \quad 2+2, \]
where the RHS means that the representation in question is either irreducible or the sum of 2 two-dimensional irreducible constituents. We consider the following different cases:
\vskip 5pt

\begin{enumerate}[({a}1)]
\item $\sigma$ primitive. Then we claim that $\sigma^{\tau} \otimes \sigma$ is irreducible. If not, then \[ \sigma^{\tau}\otimes \sigma = V_1 \oplus V_2, \quad \text{with $\dim V_i = 2$,} \]
which implies that
\[  \wedge^2 (\sigma^{\tau} \otimes \sigma) = \wedge^2 V_1 \oplus \wedge^2V_2 \oplus (V_1 \otimes V_2) \]
contains 1-dimensional constituents. However, we also have
\[  \wedge^2 (\sigma^{\tau} \otimes \sigma) = (\wedge^2 \sigma^{\tau} \otimes Sym^2(\sigma)) \bigoplus
(Sym^2(\sigma^{\tau}) \otimes \wedge^2 \sigma)  = 3+3. \]
This gives the desired contradiction. Hence $M(\sigma)$ is irreducible in this case and $N(\phi) = 2$.

\vskip 5pt

\item  $\sigma = \Ind_{W_K}^{W_E} \rho$ with $\Gal(K/F) = \Z/4\Z = \langle \tau \rangle$.
In this case, one has
\[ M(\sigma)|_{W_E} =  \sigma^{\tau} \otimes \sigma = \Ind_{W_K}^{W_E} \rho \cdot \rho^{\tau} \bigoplus \Ind_{W_K}^{W_E} \rho^{\tau} \cdot \rho^{\tau^2}  \]
and $\tau$ switches these two components. Note that those two components are non-isomorphic, because otherwise we would have $\rho \cdot \rho^{\tau}= \rho^{\tau} \cdot \rho^{\tau^2}$
or $(\rho^{\tau} \cdot \rho^{\tau^2})^{\tau^2}$,
which immediately implies $\rho^{\tau^2}=\rho$, thus contradicting the irreducibility of $\sigma$. So $M(\sigma)$ is irreducible and $N(\phi) = 2$.

\vskip 5pt

\item  $\sigma = \Ind_{W_K}^{W_E} \rho$ with $\Gal(K/F) = \Z/2\Z \times \Z/2\Z$.
In this case, let $\tau_1$ and $\tau_2$ be the two elements of $\Gal(K/F)$ which projects to the non-trivial element $\tau \in \Gal(E/F)$. One has:
\[  M(\sigma)|_{W_E} = \sigma^{\tau} \otimes \sigma = \Ind_{W_K}^{W_E} \rho \cdot \rho^{\tau_1} \bigoplus
 \Ind_{W_K}^{W_E} \rho \cdot \rho^{\tau_2} \]
and $\tau$ fixes each of the two components on the RHS. So
\[  M(\sigma) = 2+2 \]
and $N(\phi) = 4$.
\vskip 5pt

\item  $\sigma = \Ind_{W_K}^{W_E} \rho$ with $K/F$ non-Galois.
In this case, if $L$ denotes the composite of $K$ and $K^\tau$, then $L$ is the Galois closure of $K/F$
and $\Gal(L/F)$ is a non-abelian group of order 8. Since it contains the Klein 4-group $\Gal(L/E)$, it must in fact be the dihedral group of order $8$. In any case,
\[  \sigma^{\tau} \otimes \sigma|_{W_K} = \rho \cdot \sigma^{\tau}|_{W_K} \bigoplus \rho' \cdot \sigma^{\tau}|_{W_K}. \]
where $\rho' \ne \rho$ is the conjugate of $\rho$ by $W_E$.
It is not difficult to see that
\[   \sigma^{\tau} \ne \sigma \otimes \chi \Longleftrightarrow \rho'/\rho \ne \omega_{L/K} \Longleftrightarrow
\text{$\sigma^{\tau}|_{W_K}$ is irreducible}. \]
Hence, $\sigma^{\tau} \otimes \sigma|_{W_K} = 2+2$ and we have:
\[  N(\phi) = \begin{cases}
2, \text{ if $\rho \cdot \sigma^{\tau}|_{W_K}$  does not extend to $W_F$}; \\
4, \text{ otherwise.} \end{cases} \]

Let us examine this condition of extendability of $\sigma^{\tau}|_{W_K} \cdot \rho$ to $W_F$ in greater detail. For this, we first consider the issue of extendability to $W_E$.  Clearly, $\sigma^{\tau}|_{W_K} \cdot \rho$ can be extended to $W_E$ iff
\[  (\rho'/\rho) \otimes \sigma^{\tau}|_{W_K} \cong \sigma^{\tau}|_{W_K}. \]
This is equivalent to saying that $\rho'/\rho$ defines a biquadratic $M/E$ (different from $L/E$) containing quadratic subfields $K$, $K'$ and $K''$ with respect to which $\sigma$ is dihedral and such that $\sigma^{\tau}$ is dihedral with respect to $K'$ (without loss of generality). In that case, $K'$ is necessarily Galois over $F$.
In other words, the mere requirement that $\sigma^{\tau}|_{W_K} \cdot \rho$ be extendable to $W_E$ already forces $\sigma$ to be dihedral with respect to $K'/E$ such that $K'/F$ is Galois, so that we are reduced to situation (a2) or (a3). Depending on whether $K'/F$ is cyclic or biquadratic, we then obtain $N(\phi) = 2$ or $4$ respectively. Hence, our result can be stated as follows.
\[
    N(\phi) = \begin{cases}
        2, \text{ if $\rho \cdot \sigma^{\tau}|_{W_K}$  does not extend to $W_E$}; \\
        2, \text{ if $\rho \cdot \sigma^{\tau}|_{W_K}$  extends to $W_E$ but not to $W_F$};\\
        4, \text{ if $\rho \cdot \sigma^{\tau}|_{W_K}$  extends to $W_F$.}
    \end{cases}
\]
In the second case, we can be reduced to situation (a2), and in the third case, we can be reduced to (a3) by re-choosing the quartic extension $K$.
\end{enumerate}
\vskip 10pt

\noindent\underline{(b) $\sigma^{\tau} = \sigma \otimes \chi$ for some $\chi$}:
Since $\det \sigma^{\tau} = \det\sigma$, we see that
\[  \chi^2 = 1\quad \text{and} \quad  \sigma = \sigma \otimes \chi\chi^{\tau}. \]
We thus have the following cases:
\vskip 5pt
\begin{enumerate}[({b}1)]
\item  $\chi^{\tau} \ne \chi$. In this case, we need to show that $N(\phi) = 4$. Since $\sigma$
is dihedral with respect to the quadratic extension $K$ of $E$ defined by $\chi \cdot \chi^{\tau}$, it is not primitive. Moreover, since $\omega_{K/E}= \chi \cdot \chi^{\tau}$ is $\tau$-invariant and trivial on $F^{\times}$, it extends to a quadratic character of $W_F$. Hence
the quartic field $K/F$ is biquadratic. Let $\tau_1$ and $\tau_2$ be the two elements of $\Gal(K/F)$ which project to the element $\tau \in \Gal(E/F)$ and let $c = \tau_1 \cdot \tau_2$ so that $\Gal(K/E) = \langle c \rangle$. Writing
\[  \sigma = \Ind_{W_K}^{W_E} \rho, \]
the fact that $\sigma^{\tau} = \sigma \otimes \chi$ implies (without loss of generality) that
\[   \rho^{\tau_1}/ \rho  = \chi|_{W_K}. \]
Hence we have
\[  (\rho^c/\rho)^{\tau_1} = \rho^{\tau_1c}/\rho^{\tau_1} = \rho^c \cdot \chi|_{W_K} / \rho \cdot \chi|_{W_K}  = \rho^c/\rho. \]
Now we have
\[  \left(\simi(\phi)^{-1} \cdot M(\sigma) \right)|_{W_E} = \chi \oplus \chi^{\tau}  \oplus
\chi \cdot \Ind_{W_K}^{W_E} \rho^c/\rho, \]
and the first two summands on the RHS are exchanged by $\tau$.
If $\rho^c/\rho$ is not quadratic, then we see that
\[  M(\sigma) = 2+ 2 \quad \text{and} \quad N(\phi) = 4. \]
On the other hand, suppose that $\rho^c/\rho$ is quadratic. Then $\rho^c/\rho$ is fixed by $c$ and $\tau_1$ and hence by $\Gal(K/F)$. If $\mu$ is an extension of $\rho^c/\rho$ to $W_E$, then
\[  \left( \simi(\phi)^{-1} \cdot M(\sigma)\right)|_{W_E} = \chi \oplus \chi^{\tau}  \oplus \chi \mu  \oplus \chi^{\tau} \mu, \]
and $\tau$ exchanges the last two summands  as well, since $\mu$ is $\tau$-invariant. So we again have
\[  M(\sigma) = 2+2  \quad \text{and} \quad N(\phi) = 4. \]
\vskip 10pt

\item $\chi^{\tau} = \chi$. In this case, the quadratic character $\chi$ extends to $W_F$. We claim that it must extend to a quadratic character of $W_F$ instead of quartic, or equivalently that $\chi|_{F^{\times}} =1$. Suppose on the contrary that $\chi$ extends to a quartic character $\tilde{\chi}$ with $\tilde{\chi}^2  = \omega_{E/F}$. Then we would have
\[  \phi = \Ind_{W_E}^{W_F} \sigma^{\tau} = \Ind_{W_E}^{W_F} \sigma \cdot \chi = \phi \cdot \tilde{\chi}, \]
and this would imply
\begin{equation} \label{E:wedge2}
  \wedge^2 \phi = \wedge^2 \phi \cdot \tilde{\chi}^2 = \wedge^2 \phi \cdot \omega_{E/F}. \end{equation}
But $\simi(\phi)^{-1} \cdot \wedge^2\phi|_{W_E}$ contains $\chi$ with multiplicity one, so that $\simi(\phi)^{-1} \cdot \wedge^2\phi$ contains precisely one extension of $\chi$ to $W_F$. Without loss of generality, one may suppose that $\simi(\phi)^{-1} \cdot \wedge^2\phi$ contains $\tilde{\chi}$ but not $\tilde{\chi}^{-1} = \tilde{\chi} \cdot \omega_{E/F}$. But this contradicts equation (\ref{E:wedge2}).
With this contradiction, we conclude that $\chi$ extends to a quadratic character of $W_F$, or equivalently that $\chi|_{F^{\times}} =1$. Thus $\chi$ determines a biquadratic extension of $F$ with quadratic subfields $E$, $E_1$ and $E_2$.

\vskip 10pt

Now since $\chi|_{F^{\times}} = 1$, there exists a character of $W_E$ such that
\[  \chi = \lambda^{\tau}/\lambda. \]
Then we see that
\[  (\sigma \cdot \lambda^{-1})^{\tau} = \sigma \cdot \lambda^{-1}. \]
So there exists an irreducible two dimensional representation $\pi$ of $W_F$ such that
\[  \sigma = \pi|_{W_E} \cdot \lambda, \]
and hence
\[  \phi = \pi \otimes \Ind_{W_E}^{W_F} \lambda. \]
But then
\[  \wedge^2 \phi =  \det \pi \cdot \lambda|_{F^{\times}} \cdot \omega_{E/F} \cdot \left( \omega_{E/F} \oplus \omega_{E_1/F} \oplus \omega_{E_2/F} \oplus Ad(\pi)\right). \]
So we conclude that
\[ N(\phi) = \begin{cases}
4, \text{  if $\pi$ (or equivalently $\sigma$)  is primitive;} \\
8, \text{  if $\pi$ is dihedral w.r.t. 1 quadratic field;} \\
16, \text{  if $\pi$ is dihedral w.r.t. 3 quadratic fields.}
\end{cases} \]
Note that in the last two cases, the quadratic fields with respect to which $\pi$ is dihedral are necessarily different from $E$, $E_1$ and $E_2$ (by the irreducibility of $\phi$). Moreover it is clear that these three cases can occur only when $p=2$.
\end{enumerate}
\end{proof}
\vskip 20pt

\begin{Rmk}
Let us note that for the $\GSp_4$-parameter $\phi$ in the above theorem, the $L$-packet $L_{\phi}$ consists of a unique supercuspidal representation $\pi$ of $\GSp_4(F)$, and $N(\phi)$ is the number of irreducible constituents of the restriction $\pi|_{\Sp_4(F)}$.
\end{Rmk}
\vskip 10pt

The following proposition determines $N(\phi)$ and $I(\phi)$ for the remaining discrete series parameters $\phi$.
\vskip 5pt

\begin{Prop}
Let $S_n$ denote the $n$-dimensional representation of $\SL_2(\C)$. We have:
\vskip 5pt
\begin{enumerate}[(i)]
\item If $\phi = \mu \boxtimes S_4$ with $\mu$ a 1-dimensional character of $W_F$, then
\[
    N(\phi) = 1
\]
and so, of course,
\[
    I(\phi)=\langle 1 \rangle.
\]
\vskip 5pt

\item If $\phi = \sigma \boxtimes S_2$ with $\sigma$ an irreducible  2-dimensional dihedral representation of $W_F$, then
\[  N(\phi) = \begin{cases}
2, \text{ if $\sigma$ is dihedral with respect to 1 quadratic field $E/F$;} \\
4, \text{  if $\sigma$ is dihedral with respect to 3 quadratic fields $E_i/F$,}  \end{cases} \]
and respectively
\[ I(\phi) = \begin{cases}
\langle \omega_{E/F} \rangle; \\
\langle \omega_{E_1/F}, \omega_{E_2/F} \rangle. \end{cases} \]

\vskip 5pt

\item If $\phi = \phi_1 \oplus \phi_2$ where $\phi_1 \ne \phi_2$ are irreducible 2-dimensional representations of $WD_F$ with $\det \phi_1 = \det \phi_2$, then we have two cases:
\vskip 5pt

\begin{enumerate}[(a)]
\item if $\phi_1 \ne \phi_2 \otimes \chi$ for any $\chi$, then
\[
     N(\phi) = 2 \cdot \#I(\phi)= \begin{cases}
                4, \text{  if $\phi_1$ and $\phi_2$ are dihedral w.r.t. the same quadratic $E/F$;} \\
                2, \text{  otherwise}, \end{cases} \]
and respectively
\[
    I(\phi)=\begin{cases}
    \langle \omega_{E/F} \rangle\\
    \langle1 \rangle.
    \end{cases}
\]

\vskip 5pt

\item if $\phi_1 = \phi_2 \otimes \chi$, with $\chi$ necessarily quadratic, then
 \[  N(\phi) = 2 \cdot \# I(\phi)= \begin{cases}
 4,  \text{  if $\phi_1$ is primitive or non-trivial on $\SL_2(\C)$;} \\
 8, \text{  if $\phi_1$ is dihedral w.r.t. 1 quadratic field $E/F$;} \\
 16, \text{  if $\phi_1$ is dihedral w.r.t. 3 quadratic fields $E_i/F$;} \end{cases} \]
 and respectively
 \[  I(\phi) =  \begin{cases}
 \langle \chi \rangle; \\
 \langle \chi, \omega_{E/F} \rangle; \\
 \langle \chi, \omega_{E_1/F}, \omega_{E_2/F} \rangle. \end{cases} \]
\end{enumerate}
\end{enumerate}
\end{Prop}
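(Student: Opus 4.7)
The plan is to handle each of the three cases by combining Lemma \ref{L:restriction}, which identifies $I(\phi)$ with the set of quadratic self-twists of $\phi$ regarded as a $\GL_4$-parameter, with the formula $N(\phi) = \#L_{\phi} \cdot \#I(\phi)$ established in Section \ref{S:Sp(2n)}. For a discrete series parameter, $\#L_{\phi}$ is computed by direct analysis of the centralizer $Z_{\GSp_4(\C)}(\text{Im}(\phi))$ together with (H1); as a cross-check one can decompose $std(\phi)$ via $\wedge^2 \phi = \simi(\phi) \cdot (1 \oplus std(\phi))$ and apply Lemma \ref{L:size}, which for discrete series gives $\log_2 N(\phi) = \#\{\text{irreducible constituents of } std(\phi)\} - 1$.

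Cases (i) and (ii) are quick. In case (i), $\phi = \mu \boxtimes S_4$ is irreducible as a $WD_F$-module, so its $\GL_4$-centralizer is $\C^\times$ and $\#L_{\phi} = 1$; the equation $(\mu\chi) \boxtimes S_4 \cong \mu \boxtimes S_4$ forces $\chi = 1$, giving $I(\phi) = \{1\}$ and $N(\phi) = 1$. In case (ii), $\phi = \sigma \boxtimes S_2$ is likewise irreducible as a $WD_F$-module, so $\#L_{\phi} = 1$; since tensoring by $S_2$ is injective on isomorphism classes (one recovers $\sigma$ from the $S_2$-isotypic component), Lemma \ref{L:restriction} yields $I(\phi) = I(\sigma)$, and the $\SL_2$ classification recalled just before the theorem gives the stated values of $\#I(\sigma)$ and their generators.

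Case (iii) is the substantial one. Since $\phi_1 \ne \phi_2$ are irreducible and $\det \phi_1 = \det \phi_2$, a direct computation relative to a block-diagonal symplectic form (which exists by the determinant condition) shows that $Z_{\GSp_4}(\text{Im}(\phi)) = \{(aI, \pm aI) : a \in \C^\times\}$, whence $A_{\phi} \cong \Z/2\Z$ and $\#L_{\phi} = 2$. To compute $I(\phi)$, observe that $\phi \otimes \chi \cong \phi$ iff the unordered pair $\{\phi_1\chi, \phi_2\chi\}$ equals $\{\phi_1, \phi_2\}$. In subcase (a), the hypothesis $\phi_1 \not\cong \phi_2 \otimes \chi$ forces $\phi_i \otimes \chi \cong \phi_i$ for each $i$, so $I(\phi) = I(\phi_1) \cap I(\phi_2)$, which the $\SL_2$ classification evaluates as $\langle \omega_{E/F}\rangle$ when $\phi_1$ and $\phi_2$ are both dihedral with respect to $E/F$ and as $\{1\}$ otherwise. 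In subcase (b), where $\phi_2 = \phi_1 \otimes \chi_0$ with $\chi_0$ necessarily quadratic, we additionally pick up the coset $\chi_0 \cdot I(\phi_1)$, giving $I(\phi) = I(\phi_1) \sqcup \chi_0 I(\phi_1)$ of order $2 \#I(\phi_1)$; evaluating $\#I(\phi_1) \in \{1,2,4\}$ by the $\SL_2$ classification then produces $\#I(\phi) \in \{2,4,8\}$ and hence $N(\phi) \in \{4,8,16\}$.

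The main obstacle is verifying in subcase (iii)(a) that $I(\phi_1) \cap I(\phi_2)$ cannot have order $4$, i.e., that $\phi_1$ and $\phi_2$ cannot simultaneously be dihedral with respect to a common biquadratic extension $K/F$. To rule this out, one writes $\phi_i = \Ind_{E_1}^F \chi_i$ for a common quadratic subfield $E_1$ of $K$ (with $\Gal(E_1/F) = \langle \tau\rangle$); the requirement that both $\phi_i$ be dihedral with respect to $K$ amounts to $\chi_i^\tau/\chi_i = \omega_{K/E_1}$ for $i = 1, 2$, which forces $(\chi_2/\chi_1)^\tau = \chi_2/\chi_1$, so that $\chi_2/\chi_1$ descends to a character $\lambda$ of $F$ with $\phi_2 = \phi_1 \otimes \lambda$. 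This places us in subcase (b), contradicting the hypothesis of (iii)(a), so the unwanted scenario does not arise and all the stated counts for $N(\phi)$ follow.
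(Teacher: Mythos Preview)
Your proof is correct and follows essentially the same strategy as the paper: compute $I(\phi)$ via Lemma~\ref{L:restriction} by analyzing how a twist permutes the irreducible summands, compute $\#L_\phi$ from the centralizer, and combine via $N(\phi)=\#L_\phi\cdot\#I(\phi)$. The paper sometimes phrases the count instead through the decomposition of $std(\phi)$ and Lemma~\ref{L:size}, but the substance is the same; your handling of the ``main obstacle'' in (iii)(a) is in fact slightly slicker than the paper's, since you only need $\chi_2/\chi_1$ to extend to \emph{some} character of $W_F$ (which follows immediately from $\tau$-invariance) rather than to a quadratic one, and this already contradicts the hypothesis $\phi_1\not\cong\phi_2\otimes\chi$ for any $\chi$.
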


\vskip 10pt

\begin{proof}\quad\\
\noindent (i) It is easy to see that $std(\phi) = S_5$ is irreducible.

\vskip 5pt

\noindent (ii) This follows from
 \[ \det \sigma^{-1} \cdot  \wedge^2 \phi = Ad(\sigma) \bigoplus  S_3. \]
\vskip 5pt

 \noindent (iii) Consider the two cases separately.
 \vskip 5pt
 \begin{enumerate}[(a)]
\item If $\phi_1 \ne \phi_2 \otimes \chi$, then
 \[  \det \phi_1 \cdot std(\phi) = \det\phi_1  \bigoplus \phi_1 \otimes \phi_2, \]
 and $\phi_1 \otimes \phi_2 = 2+2$ or $4$. So $N(\phi) = 2$ or $4$. On the other hand, it is clear that
  \[  I(\phi) = \{ \text{quadratic characters $\chi$: $\phi_i \otimes \chi = \phi_i$} \} \]
  This gives the desired result.
  \vskip 5pt
 Observe in particular that it is not possible for $\phi_1$ and $\phi_2$ to be dihedral with respect to the same 3 quadratic fields. This is well-known and can in fact be checked directly. Namely, suppose that $\phi_i= \Ind_{W_E}^{W_F} \rho_i$ with $\rho_1^c/\rho_1 = \rho_2^c/\rho_2$ quadratic. Then $\rho_1/\rho_2$ is $\Gal(E/F)$-invariant and
\[  \det \phi_1 = \det \phi_2 \Longrightarrow \rho_1|_{F^{\times}} = \rho_2|_{F^{\times}}, \]
so that $\rho_1/\rho_2$ is quadratic and trivial on $F^{\times}$. Hence $\rho_1/\rho_2$ extends to a quadratic character $\omega$. But this implies that $\phi_1 = \phi_2 \otimes \omega$, which contradicts our assumption that $\phi_1 \ne \phi_2 \otimes \chi$ for any $\chi$.

  \vskip 10pt

  \item If $\phi_1 = \phi_2 \otimes \chi$, with $\chi^2 = 1$, then $\phi_1$ is not dihedral with respect to
  $\chi$. Moreover, it is clear that $I(\phi)$ is generated by $\chi$ and the quadratic characters $\omega$ such that $\phi_1 = \phi_1 \otimes \omega$. This gives the desired result.
\end{enumerate}
\end{proof}

\begin{Rmk}
In the above proposition, in situations (i) and (ii), the $\GSp_4$ $L$-packet $L_{\phi}$ consists of a unique non-supercuspidal representation. In situation (iii), it consists of two discrete series representations. Also in (ii), in order for $\phi$ to be symplectic, $\sigma$ is necessarily dihedral. (See \cite{GT2}.)
\end{Rmk}
\vskip 20pt

\noindent{\bf \underline{Non-Discrete Series Parameters}}
\vskip 10pt

The following proposition determines $N(\phi)$ and $I(\phi)$ for  the non-discrete series parameters
$\phi$ of $\GSp_4$. We omit the proof as it is quite simple. Indeed, one can easily determine $I(\phi)$ by using Lemma \ref{L:restriction}.

\vskip 10pt

\begin{Prop}
Consider the non-discrete-series parameters $\phi$ of $\GSp_4(F)$ which fall into 3 families according to
the smallest Levi subgroup of $\GSp_4(\C)$ through which $\phi$ factors.
\vskip 5pt

\begin{enumerate}[(i)]
\item If $Im(\phi)$ is contained in a Siegel parabolic (but not a Borel subgroup), so that  $\phi = \sigma \oplus (\sigma \otimes \chi)$ with $\sigma$ irreducible and $\simi(\phi) = \chi \cdot \det \sigma$, then we consider the following different cases:
\begin{enumerate}[(a)]
\item if $\chi^2 \ne 1$, then
\[  N(\phi) = \# I(\phi)= \begin{cases}
1, \text{  if $\sigma$ is primitive or non-trivial on $\SL_2(\C)$;} \\
2, \text{  if $\sigma$ is dihedral w.r.t. 1 quadratic field $E$;} \\
4, \text{  if $\sigma$ is dihedral w.r.t. 3 quadratic fields $E_i$;} \end{cases} \]
and respectively
\[  I(\phi) = \begin{cases}
\langle 1 \rangle\\
\langle \omega_{E/F} \rangle; \\
\langle \omega_{E_1/F} , \omega_{E_2/F} \rangle. \end{cases} \]

\vskip 5pt

\item if $\chi^2 = 1$, then
\[  N(\phi) = \begin{cases}
\#I(\phi), \text{  if $\chi\ne 1$;} \\
2 \cdot \#I(\phi), \text{  if $\chi=1$.} \end{cases} \]
Moreover,
\[  I(\phi) = \begin{cases}
\langle \chi \rangle, \text{ if $\sigma$ is primitive or non-trivial on $\SL_2(\C)$;} \\
\langle \chi, \omega_{E/F} \rangle, \text{  if $\sigma$ is dihedral w.r.t. 1 quadratic $E/F$;} \\
\langle \chi, \omega_{E_1/F} , \omega_{E_2/F} \rangle, \text{  if $\sigma$ dihedral w.r.t. 3 quadratic $E_i/F$,} \end{cases} \]
with $\chi$ suppressed if $\chi=1$.
\end{enumerate}
\vskip 10pt

\item If $Im(\phi)$ is contained in a Klingen parabolic (but not a Borel subgroup), so that $\phi = \chi \cdot (1 \oplus \sigma \oplus \det\sigma)$ with $\sigma$ irreducible and $\simi(\phi) = \chi^2 \cdot \det \sigma$, then
\[  N(\phi) = \# I(\phi) = \begin{cases}
2, \text{  if $\sigma$ is dihedral with respect to $E/F$ and $\det\sigma = \omega_{E/F}$;} \\
1, \text{  otherwise,} \end{cases} \]
and respectively
\[
    I(\phi)=\begin{cases}
    \langle \omega_{E/F} \rangle\\
    \langle 1 \rangle.
    \end{cases}
\]

\vskip 10pt

\item If $Im(\phi)$ is contained in a Borel subgroup, so that $\phi = \chi \cdot (\chi_1\chi_2\oplus \chi_1 \oplus \chi_2\oplus 1)$, with $\simi(\phi) = \chi^2\chi_1\chi_2$,  then the non-trivial quadratic characters in $I(\phi)$ are precisely the distinct non-trivial quadratic characters amongst $\chi_1$,
$\chi_2$ and $\chi_1\chi_2$. More precisely,
\[  N(\phi) = \# I(\phi) = \begin{cases}
4, \text{  if $\chi_1 \ne \chi_2$ are nontrivial quadratic;} \\
1, \text{  if none of $\chi_1$, $\chi_2$ and $\chi_1\chi_2$ is non-trivial quadratic;} \\
2, \text{  if exactly one of $\chi_1$, $\chi_2$ and $\chi_1\chi_2$ is  quadratic,}
     \end{cases}
\]
and respectively
\[  I(\phi) = \begin{cases}
        \langle \chi_1,\chi_2 \rangle\\
        \langle 1 \rangle\\
        \langle\omega\rangle,
        \end{cases}
\]
where $\omega$ is the unique quadratic character among $\chi_1$, $\chi_2$ or $\chi_1\chi_2$ in the last case.
\end{enumerate}

\end{Prop}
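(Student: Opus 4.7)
The plan is to apply Lemma \ref{L:restriction}, which identifies $I(\phi)$ with the set of quadratic characters $\eta$ of $W_F$ for which $\iota\circ\phi \cong (\iota\circ\phi)\otimes\eta$ as $4$-dimensional representations of $WD_F$. In each of the three families the representation $\iota\circ\phi$ is given explicitly as a direct sum of characters and at most one irreducible $2$-dimensional summand, so I would reduce the condition $\phi\otimes\eta\cong\phi$ to the combinatorial statement that multiplication by $\eta$ permutes the multiset of irreducible constituents. Once $I(\phi)$ is known, $N(\phi) = \#L_\phi \cdot \#I(\phi)$ by Proposition \ref{P:exact}, so the remaining input is $\#L_\phi$. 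Since none of the parameters considered is discrete series, the main theorem of \cite{GT} forces $\#L_\phi = 1$ in every case except the reducible Siegel parameter $\phi = 2\sigma$ (that is, $\chi = 1$ in family (i)(b)), for which $\#L_\phi = 2$; this is what accounts for the lone factor of $2$ in the formulas.

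For family (i), twisting $\phi = \sigma \oplus \sigma\chi$ by $\eta$ produces $\sigma\eta \oplus \sigma\eta\chi$, and matching summands forces either $\sigma\eta\cong\sigma$ (so $\eta\in I(\sigma)$) or $\sigma\eta\cong\sigma\chi$ (so $\eta\in\chi\cdot I(\sigma)$). Since every $\eta\in I(\phi)$ is quadratic, the second coset contributes precisely when $\chi$ itself is quadratic, which cleanly separates subcases (a) and (b). The conclusion then follows from the standard classification of self-twists of an irreducible $2$-dimensional representation of $WD_F$, giving $\#I(\sigma) = 1, 2$ or $4$ according as $\sigma$ is primitive (or a Steinberg twist), dihedral with respect to one quadratic extension, or dihedral with respect to three.

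For family (ii), I would observe that $\phi = \chi \oplus \chi\sigma \oplus \chi\det\sigma$ has a unique $2$-dimensional constituent $\chi\sigma$, so any $\eta\in I(\phi)$ must fix it, forcing $\eta\in I(\sigma)$. Matching the remaining two $1$-dimensional summands under multiplication by $\eta$ then forces either $\eta=1$ or $\eta=\det\sigma$ together with $(\det\sigma)^2=1$. The nontrivial case therefore requires $\det\sigma\in I(\sigma)$, which for irreducible $2$-dimensional $\sigma$ happens exactly when $\sigma$ is dihedral with respect to the quadratic field $E/F$ cut out by $\omega_{E/F}=\det\sigma$, yielding the stated dichotomy.

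For family (iii), the condition $\phi\otimes\eta\cong\phi$ amounts to requiring that multiplication by $\eta$ permute the multiset $\{1,\chi_1,\chi_2,\chi_1\chi_2\}$; since $\eta = \eta\cdot 1$ must be one of these four characters, $\eta$ is automatically quadratic, and the problem reduces to counting which of $\chi_1$, $\chi_2$, $\chi_1\chi_2$ are distinct nontrivial quadratic characters. The possible counts are $0$, $1$, or $3$, the value $2$ being excluded because the product of two quadratic characters is again quadratic, and this gives the three subcases. The main (minor) subtlety throughout the whole argument is to keep track correctly of when $\#L_\phi=2$ rather than $1$ for these non-discrete-series parameters; this happens only for the reducible Siegel parameter $\phi=2\sigma$, a fact I would import directly from \cite{GT}.
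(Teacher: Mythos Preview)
Your proposal is correct and follows exactly the approach the paper indicates: the paper in fact omits the proof entirely, stating only that it ``is quite simple'' and that ``one can easily determine $I(\phi)$ by using Lemma \ref{L:restriction}.'' Your write-up fills in precisely those details---reducing $I(\phi)$ to the combinatorics of how a quadratic twist permutes the irreducible constituents of $\iota\circ\phi$, and then importing $\#L_\phi$ from \cite{GT} (with the sole exception $\phi=2\sigma$ giving $\#L_\phi=2$, consistent with the Remark following the proposition).
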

\vskip 5pt

\begin{Rmk}
In situation (b) in (i) above, if $\chi=1$, the $\GSp_4$ $L$-packet $L_{\phi}$ consists of two non-discrete series representations, namely in the notation of \cite{GT2}, $L_{\phi}=\{\pi_{gen}(\tau), \pi_{ng}(\tau)\}$ where $\tau$ is the irreducible admissible representation of $\GL_2(F)$ corresponding to $\sigma$. For all the other cases, $L_{\phi}$ consists of a unique non-discrete series representation.
\end{Rmk}

\vskip 20pt


\appendix
\section{\bf Restrictions of admissible representations of $\GSp_4(F)$ to $\Sp_4(F)$}


Recall that for each $\pi\in\Pi(\GSp_4)$ we define
\[
    \JH(\pi):=\{\text{constituents of } {\pi}|_{\Sp_{4}}\}.
\]
Then the following tables summarize the sizes of $\JH(\pi)$ for irreducible admissible representations $\pi$ of $\GSp_4(F)$. Those tables are obtained simply by translating what we have obtained for the ``Galois side'' in the previous section to the ``automorphic side". We use the notations of \cite{GT2} to describe $\pi$. (See Table 1 of \cite{GT2}.)

\begin{table}[h]
\centering \caption{Restriction from $\GSp_4(F)$ to $\Sp_4(F)$ (Supercuspidal)}
\label{maintable} \vspace{-3ex}
$$
\renewcommand{\arraystretch}{1.5}
 \begin{array}{|c|c|c|c|c|c|}
  \hline&&\multicolumn{2}{|c|}{\pi}
   &\mbox{$\#\JH(\pi)$}&\\\hline\hline

   &\multirow{2}{*}{a}&\multicolumn{2}{|c|}{\text{not a lift from}}
   &2, 4
   &\mbox{$p\neq 2$}\\ \cline{5-6}
   &&\multicolumn{2}{|c|}{\text{$\GSO(2,2)$ or $\GSO(4)$}}
   &1,2,4,8,16
   &\mbox{$p=2$}\\ \cline{3-6}

   &&&\multirow{2}{*}{$\tau_1\neq\tau_2\otimes\chi$}
   &2
   &\mbox{$\tau_1$ and $\tau_2$ are dihedral w.r.t. same quad. ext.}\\ \cline{5-6}
   \mbox{S.C.}&&\mbox{$\theta(\tau_1\boxtimes\tau_2)$}&
   &1
   &\mbox{otherwise}\\ \cline{4-6}
   &b, c&\mbox{or}&
   &2
   &\mbox{$\tau_1$ is primitive or twisted Steinberg.}\\ \cline{5-6}
   &&\mbox{$\theta(\tau_1^D\boxtimes\tau_2^D)$}&\mbox{$\tau_1=\tau_2\otimes\chi$}
   &4
   &\mbox{$\tau_1$ is dihedral w.r.t. 1 quad. ext.}\\ \cline{5-6}
   &&&
   &8
   &\mbox{$\tau_1$ is dihedral w.r.t. 3 quad. ext.}\\ \hline

\end{array}
$$
\end{table}

\begin{table}[h]
\centering \caption{Restriction from $\GSp_4(F)$ to $\Sp_4(F)$ (Discrete Series)}
\label{maintable} \vspace{-3ex}
$$
\renewcommand{\arraystretch}{1.5}
 \begin{array}{|c|c|c|c|c|c|}
  \hline&&\multicolumn{2}{|c|}{\pi}
   &\mbox{$\#\JH(\pi)$}&\\\hline\hline

   &\multirow{2}{*}{a}&\multicolumn{2}{|c|}{\multirow{2}{*}{$St(\chi,\tau)$}}
   &2
   &\mbox{$\tau$ is dihedral w.r.t. 1 quad. ext.}\\ \cline{5-6}
   &&\multicolumn{2}{|c|}{}
   &4
   &\mbox{$\tau$ is dihedral w.r.t. 3 quad. ext.}\\ \cline{3-6}

\mbox{D.S.}&\multirow{2}{*}{b}&\multicolumn{2}{|c|}{\multirow{2}{*}{$St(\tau, \mu)$}}
   &2
   &\mbox{$\tau=st_{\chi}$ with $\chi$ non-trivial quadratic}\\ \cline{5-6}
   &&\multicolumn{2}{|c|}{}
   &1
   &\mbox{otherwise}\\ \cline{3-6}

   &\mbox{c}&\multicolumn{2}{|c|}{St_{\PGSp_4}\otimes\mu}
   & 1
   &\\ \hline

\end{array}
$$
\end{table}

\begin{table}[h]
\centering \caption{Restriction from $\GSp_4(F)$ to $\Sp_4(F)$ (Non Discrete Series)}
\label{maintable} \vspace{-3ex}
$$
\renewcommand{\arraystretch}{1.5}
 \begin{array}{|c|c|c|c|c|c|}
  \hline&&\multicolumn{2}{|c|}{\pi}
   &\mbox{$\# \JH(\pi)$}&\\\hline\hline

   &&&
   &2
   &\mbox{$\tau$ is primitive or twisted Steinberg}\\ \cline{5-6}
   &&&\mbox{$\chi^2=1$}
   &4
   &\mbox{$\tau$ is dihedral w.r.t. 1 quad. ext.}\\ \cline{5-6}
   &\multirow{2}{*}{a}&\mbox{$J_{Q(Z)}(\chi,\tau)$}&
   &8
   &\mbox{$\tau$ is dihedral w.r.t. 3 quad. ext.}\\ \cline{4-6}
   &&\mbox{$\chi \ne 1$}&
   &1
   &\mbox{$\tau$ is primitive or twisted Steinberg.}\\ \cline{5-6}
   &&&\mbox{$\chi^2\neq 1$}
   &2
   &\mbox{$\tau$ is dihedral w.r.t. 1 quad. ext.}\\ \cline{5-6}
   &&&
   &4
   &\mbox{$\tau$ is primitive or twisted Steinberg}\\ \cline{3-6}

   \multirow{2}{*}{N.D.S.}&&\multicolumn{2}{|c|}{}
   &1
   &\mbox{$\tau$ is primitive or twisted Steinberg.}\\ \cline{5-6}
   &\mbox{b, c}  & \multicolumn{2}{|c|}{\pi_{gen}(\tau) \text{ or } \pi_{ng}(\tau)}
   &2
   &\mbox{$\tau$ is dihedral w.r.t. 1 quad. ext.}\\ \cline{5-6}
   &&\multicolumn{2}{|c|}{}
   &4
   &\mbox{$\tau$ is dihedral w.r.t. 3 quad. ext.}\\ \cline{3-6}

   &\multirow{2}{*}{d}&\multicolumn{2}{|c|}{\multirow{2}{*}{$J_{P(Y)}(\tau,\chi)$}}
   &2
   &\mbox{$\tau=\tau\otimes\omega_\tau$, $\omega_{\tau} \ne 1$}\\ \cline{5-6}
   &&\multicolumn{2}{|c|}{}
   &1
   &\mbox{otherwise}\\ \cline{3-6}

   &&\multicolumn{2}{|c|}{}
   &4
   &\mbox{$\chi_1 \ne \chi_2$ non-trivial quadratic.}\\ \cline{5-6}
   &\mbox{e}  & \multicolumn{2}{|c|}{J_B(\chi_1,\chi_2;\chi)}
   &1
   &\mbox{none of $\chi_1$, $\chi_2$ or $\chi_1\chi_2$ non-trivial quadratic}\\ \cline{5-6}
   &&\multicolumn{2}{|c|}{}
   &2
   &\mbox{otherwise}\\ \cline{3-6}
   \hline
\end{array}
$$
\end{table}

\newpage
\quad\\
\newpage

\end{document}